\newtheorem{theorem}{Theorem}[section]
\newtheorem{corollary}[theorem]{Corollary}
\newtheorem{lemma}[theorem]{Lemma}
\newtheorem{proposition}[theorem]{Proposition}
\theoremstyle{definition}
\newtheorem{definition}[theorem]{Definition}
\newtheorem{remark}[theorem]{Remark}
\numberwithin{equation}{section}
\newcommand\E{\mathbb{E}}
\newcommand\Z{\mathbb{Z}}
\newcommand\R{\mathbb{R}}
\newcommand\C{\mathbb{C}}
\newcommand\N{\mathbb{N}}
\newcommand\X{\mathcal{X}}
\newcommand\Y{\mathcal{Y}}
\newcommand\BD{\underline{\mathtt{BD}}}
\newcommand\st{\mathtt{st}}
\newcommand{\ep}{\varepsilon}
\newcommand\Aut{\operatorname{Aut}}
\newcommand\Baire{\mathcal{B}a}
\newcommand\Borel{\mathcal{B}o}
\newcommand\ConcProb{\mathbf{CncPrb}}
\newcommand\CHProb{{\mathbf{CHPrb}}}
\newcommand\ProbAlg{\mathbf{PrbAlg}}
\newcommand\OpProbAlg{{\mathbf{PrbAlg}}}
\newcommand\OpProbAlgG{{\mathbf{PrbAlg}_\Gamma}}
\newcommand\Stone{\mathtt{Conc}}
\newcommand\Inv{\mathtt{Inv}_\Gamma}
\begin{document}


\baselineskip=17pt


\title[An uncountable ergodic Roth theorem and applications]{An uncountable ergodic Roth theorem and applications}

\author[P. Durcik]{Polona Durcik}
\address{Polona Durcik\\ Schmid College of Science and Technology\\ Chapman University  \\
Orange \\ CA,  USA}
\email{durcik@chapman.edu}

\author[R. Greenfeld]{Rachel Greenfeld}
\address{Rachel Greenfeld\\ Department of Mathematics\\ University of California \\ 
Los Angeles \\
CA, USA}
\email{greenfeld@math.ucla.edu}

\author[A. Iseli]{Annina Iseli}
\address{Annina Iseli\\ Department of Mathematics\\ University of California \\ 
Los Angeles \\ 
CA, USA \newline \indent Department of Mathematics\\ University of Fribourg \\
Fribourg \\
Switzerland}
\email{annina.iseli@unifr.ch}

\author[A. Jamneshan]{Asgar Jamneshan}
\address{Asgar Jamneshan\\ Department of Mathematics\\ Ko\c{c} University \\ Istanbul \\
Turkey}
\email{ajamneshan@ku.edu.tr}

\author[J. Madrid]{Jos\'e Madrid}
\address{Jos\'e Madrid\\  Department of Mathematics\\ University of California \\ 
Los Angeles \\
CA, USA}
\email{jmadrid@math.ucla.edu}

\date{\today}

\begin{abstract}
We establish an uncountable  amenable ergodic Roth theorem, in which the acting group is not assumed to be countable and the space need not be separable. This generalizes a previous result of Bergelson, McCutcheon and Zhang, and complements a result of Zorin-Kranich.
We establish the following two additional results:
First, a combinatorial application about triangular patterns in certain subsets of the Cartesian square of arbitrary amenable groups, extending a result of Bergelson, McCutcheon and Zhang for countable amenable groups.
Second, a uniformity aspect in the double recurrence theorem for $\Gamma$-systems for arbitrary uniformly amenable groups $\Gamma$. 
Our uncountable Roth theorem is crucial in the proof of both of these results.
\end{abstract}

\subjclass[2020]{Primary: 37A15, 37A30; Secondary: 05D10.}

\keywords{Uncountable ergodic theory, ergodic Ramsey theory, ergodic Roth theorem, amenable groups, syndetic sets, uniformity in recurrence, Furstenberg correspondence principle.}

\maketitle

\section{Introduction}

A famous and deep theorem of Szemer\'edi \cite{szemeredi1975sets} asserts that any subset of the integers of positive upper density contains arbitrarily long arithmetic progressions.
The special case of this theorem for three term progressions was established earlier by Roth \cite{roth1952}.
In the pioneering article \cite{furstenberg1977ergodic}, Furstenberg related Szemer\'edi's theorem to a multiple recurrence theorem in ergodic theory which initiated a very fruitful development of applying dynamical methods to arithmetic combinatorics.
The focus of the current paper is on extensions, uniformity aspects, and combinatorial applications of the double recurrence theorem of Furstenberg which corresponds to the theorem of Roth on the combinatorial side.

Let us state Furstenberg's double recurrence theorem also called the ergodic Roth theorem in \cite{furstenberg1977ergodic}.
For a measurable space $(X,\mathcal{X})$, we let $\Aut(X,\mathcal{X})$ denote the group of bimeasurable point maps $f:X\to X$ where  the group law is given by composition of functions.
If $\mu$ is a probability measure on $(X,\mathcal{X})$, we denote by $\Aut(X,\mathcal{X},\mu)$ the subgroup of $\Aut(X,\mathcal{X})$ preserving $\mu$, that is all $f\in \Aut(X,\mathcal{X})$ such that $\mu(f^{-1}(E))=\mu(E)$ for all $E\in\mathcal{X}$. A measure-preserving action of the integers $\mathbb{Z}$ on $(X,\mathcal{X},\mu)$ is a group homomorphism $T:\mathbb{Z}\to\Aut(X,\mathcal{X},\mu)$ written as $n\mapsto T^n$.
Furstenberg's  double recurrence theorem \cite[Theorem 3.5]{furstenberg1977ergodic} states that for any such action $T$ and any $E\in\mathcal{X}$ of positive $\mu$-measure, the limit
\begin{equation*}
\lim_{N\to \infty} \frac{1}{N} \sum_{n=1}^N \mu(E\cap T^n E\cap T^{2n} E)
\end{equation*}
exists and is positive.
Using a correspondence principle \cite[Lemma 3.17]{furstenberg2014recurrence}, Furstenberg showed that this double recurrence theorem is equivalent to Roth's theorem.
In fact, Furstenberg's  result yields that the set of double recurrence times
\begin{equation*}
\{n\in\mathbb{Z}: \mu(E\cap T^n E\cap T^{2n} E)>0\}
\end{equation*}
is syndetic, that is intuitively speaking, has bounded gaps (a formal definition is given further below). In particular, this implies that there are infinitely many three term arithmetic progressions in a subset of positive upper density of the integers.
  Bergelson, Host and Kra \cite{bergelson-nilseq} significantly strengthen Furstenberg's double recurrence theorem by showing that
\begin{equation*}
  \{n\in\mathbb{Z}: \mu(E\cap T^n(E)\cap T^{2n}(E))>\mu(E)^3-\delta\}
\end{equation*}
 is syndetic for all $\delta>0$ under the additional hypothesis that the measure-preserving dynamical $\mathbb{Z}$-system is ergodic (their strengthening aligns with Khintchine's \cite{khintchine} strengthening of Poincare's recurrence theorem \cite{poincare}).
An important aspect of the Bergelson-Host-Kra result is that the lower bound depends only on the measure of $E$, but is otherwise uniform over all measure-preserving ergodic $\mathbb{Z}$-systems. In other terms, for every $\ep>0$, any measure-preserving ergodic $\mathbb{Z}$-system $(X,\mathcal{X},\mu,T)$ and all $E\in X$ with $\mu(E)\geq \ep$, we have that \begin{equation*} \{n\in\mathbb{Z}: \mu(E\cap T^n(E)\cap T^{2n}(E))>\ep^3-\delta\} \end{equation*}  is syndetic for all $\delta>0$. 

From this ergodic-theoretic perspective,
it is natural to ask how one can generalize Furstenberg's double recurrence theorem, its uniformity aspects, and its combinatorial consequences to the setting of other group actions. Furstenberg and Katznelson \cite{furstenberg1978ergodic}  establishes a double recurrence theorem for  $\mathbb{Z}^2$-actions. Moreover, Conze and Lesigne \cite{conles84} establish the $L^2$-convergence of the respective ergodic averages. On the other hand,
examples due to Bergelson and Hindman \cite{bergelson1992topological} and Bergelson and Leibman \cite{bergelson2004failure} show that a na\"ive translation of Furstenberg's and Furstenberg-Katznelson's double recurrence theorems to arbitrary countable amenable groups may fail.
Instead, Bergelson, McCutcheon and Zhang \cite{bergelson1997roth} propose to study the averages
\begin{equation*}
\frac{1}{|\Phi_n|}\sum_{\gamma\in\Phi_n} \mu(E\cap T^\gamma E\cap S^\gamma T^\gamma E)
\end{equation*}
where $S,T\colon \Gamma\to \Aut(X,\mathcal{X},\mu)$ are two commuting actions of an amenable group~$\Gamma$ and $(\Phi_n)$ is a F\o lner sequence for $\Gamma$. In fact, in the same article Bergelson, McCutcheon and Zhang establish existence and positivity of the limit of the above averages as $n\to \infty$. As a consequence, using a  version of Furstenberg's correspondence principle for countable amenable groups, they establish the following combinatorial application. Let $E$ be a subset of $\Gamma\times \Gamma$ with positive upper density with respect to some F\o lner sequence in $\Gamma\times \Gamma$. Then the set
\begin{equation*}
\{\gamma\in\Gamma: \text{ there exists } (a,b)\in\Gamma\times \Gamma \text{ with } ((a,b), (\gamma a, b), (\gamma a, \gamma b))\in E\}
\end{equation*}
is (left and right) syndetic. For a definition of F\o lner sequences (and nets) and the notions of syndeticity in general groups the interested reader is referred to Section \ref{sec:results} below.

Austin \cite{austin2016nonconventional} generalized both the convergence and double recurrence results of Bergelson, McCutcheon and Zhang to finitely many commuting actions of countable amenable groups  using his method of sated extensions.
Furthermore,
Chu and Zorin-Kranich \cite{lowerbound} establish a corresponding Khintchine-type theorem for two commuting ergodic actions of a countable amenable group, using techniques of Austin's sated extensions and relying on previous work by Chu \cite{chu-two} in the setting of two commuting ergodic measure-preserving $\Z$-actions. Finally, we also point out the recent paper by Moragues \cite{moragues}, where some versions of the Bergelson-Host-Kra uniformity results are obtained for finitely many commuting measure-preserving ergodic $\mathbb{Z}$-actions.
We stress that all the Khintchine-type uniformity results \cite{bergelson-nilseq,chu-two,lowerbound,moragues} work only in the class of ergodic systems.

Adapting arguments of Walsh \cite{walsh2012norm}, Zorin-Kranich \cite{zorin2016norm} extended the convergence results of Bergelson, McCutcheon and Zhang, and Austin to the actions of arbitrary, not necessarily countable, amenable groups acting on arbitrary, not necessarily separable spaces.
However, Zorin-Kranich's result does not provide much information about the limit object. In particular it does not entail multiple or even double recurrence.
In this context, it should also be mentioned that one easily obtains at least one double or multiple recurrence time for any group from Furstenberg's multiple recurrence theorem by restricting to a cyclic subgroup. However, this reduction does not yield syndeticity of multiple recurrence times which is relevant in combinatorial applications.

A main goal of this paper is to obtain syndeticity of double return
times for arbitrary amenable groups by extending the double recurrence theorem of Bergelson, McCutcheon and Zhang to uncountable amenable groups acting on arbitrary not necessarily separable spaces.
By an uncountable version of Furstenberg's correspondence principle, we also obtain an analogue of the aforementioned combinatorial application for uncountable amenable groups.
For this combinatorial application, we need to allow the underlying probability spaces to be inseparable since the shift systems required in Furstenberg's correspondence principle for uncountable groups are inseparable by construction.
Moreover, we derive a new uniformity aspect for the set of double return times in the amenable ergodic Roth theorem which also heavily relies on the uncountable/inseparable setting.

Some foundational aspects arising in the ergodic theory of uncountable groups and inseparable spaces was systematically investigated by the fourth author and Tao  in  \cite{jt-foundational,jt19,jt20} and by the fourth author in \cite{jamneshan2019fz}.  For example, in the area of multiple recurrence, the tool of disintegration of measures is used extensively. It is well known that in its classical form,  disintegration of measures fails for inseparable spaces in general. One of the major challenges is then to find a suitable alternative framework in which we find viable replacements for tools such as disintegration of measures which can help to meaningfully adapt the arguments from the countable setting. For further details, we refer the interested reader to \cite{jt-foundational,jt19,jt20,jamneshan2019fz}.

\subsection{Results}
\label{sec:results}
In order to state our results, we briefly introduce the framework of abstract measure preserving dynamical systems. See Section \ref{sec-canmodel} for a more detailed account.
The framework of uncountable groups yields the problem of unions of uncountably many null sets if we were to work with classical probability spaces. Therefore, we work with probability algebras instead.
A {\em probability algebra} is a tuple $(X,\mu)$ of an abstract $\sigma$-complete Boolean algebra $X=(X,\wedge,\vee,\bar\cdot,0,1)$ equipped with a probability measure $\mu$.\footnote{Probability algebras are also called measure algebras in the ergodic theory literature, e.g., see \cite{furstenberg2014recurrence,glasner2015ergodic}.}
Probability algebras can be thought of as point-free probability spaces since there is no underlying set {\em a priori}, i.e. not every abstract $\sigma$-complete Boolean algebra is a $\sigma$-algebra of subsets of a set, see \cite{loomis1947}.  Usually, there is also no loss of generality when working with probability algebras instead of classical probability spaces. Namely,  one can always quotient out the null ideal of a concrete probability space to obtain a probability algebra.

Similarly to concrete measure-preserving actions, abstract measure-preserving actions can be introduced as group homomorphisms into the automorphism group of a probability algebra. The automorphism group $\Aut(X,\mu)$ of a probability algebra $(X,\mu)$ consists of all Boolean isomorphisms $f:X\to X$ which are measure-preserving, namely $\mu(f(E))=\mu(E)$ for every $E\in X$ and where the group law is given by composition of Boolean homomorphisms.
Now let $\Gamma$ be an arbitrary discrete, not necessarily countable group. We define a \emph{probability algebra $\Gamma$-dynamical system} to be a triple $(X,\mu,T)$ where $T:\Gamma\to\Aut(X,\mu)$ is a group homomorphism. We say that $T$ is an abstract action. Given a second abstract action $S$, we say that $T$ and $S$ commute if $T^\gamma \circ S^{\gamma'}(E)=S^{\gamma'}\circ T^\gamma(E)$ for all $E\in X$ and $\gamma,\gamma'\in \Gamma$, where the symbol $\circ$ denotes composition of maps.
We call the quadruple $(X,\mu,T,S)$ an \emph{abstract Roth $\Gamma$-dynamical system}.

Recall that a (left) F\o lner net for $\Gamma$ is  a net $(\Phi_\alpha)_{\alpha\in A}$ of non-empty finite subsets of $\Gamma$ such that
    \begin{equation*}
    \lim_{\alpha \in A} \frac{|\Phi_\alpha \Delta \gamma \Phi_\alpha|}{|\Phi_\alpha|}\to 0
    \end{equation*}
    for all $\gamma\in \Gamma$, where $\Delta$ denotes set symmetric difference.
A discrete group $\Gamma$ is said to be amenable if it has a F\o lner net\footnote{See Appendix \ref{sec-amsyn} for equivalent definitions of amenability.}.

Now we are ready to state our first main result, which is an uncountable version of the ergodic Roth theorem of Bergelson, McCutcheon and Zhang for discrete  amenable groups.
\begin{theorem}[Ergodic Roth theorem for arbitrary amenable groups]\label{thm-uncountableroth}
    Let $\Gamma$ be an arbitrary amenable discrete group. Let $(X,\mu,T,S)$ be an arbitrary abstract Roth $\Gamma$-dynamical system.
    Then for every $E\in X$ and left F\o lner net $(\Phi_\alpha)_{\alpha\in A}$ for $\Gamma$, the limit
    \begin{equation}
        \label{mainlimit}
        \lim_{\alpha\in A}\frac{1}{|\Phi_\alpha|}\sum_{\gamma\in \Phi_\alpha} \mu(E\wedge T^\gamma (E) \wedge S^\gamma T^\gamma(E))
    \end{equation}
    exists and is independent of the choice of the left F\o lner net.
    Moreover, the limit is positive whenever $\mu(E)>0$.
    \end{theorem}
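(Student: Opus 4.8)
The plan is to adapt the argument of Bergelson, McCutcheon and Zhang \cite{bergelson1997roth} to the probability-algebra framework, replacing the tools that are special to countable groups and separable spaces by their counterparts in uncountable ergodic theory. First I would reformulate the correlation. Since $S$ and $T$ commute, the map $R\colon\Gamma\to\Aut(X,\mu)$, $\gamma\mapsto S^\gamma T^\gamma$, is again a measure-preserving action of $\Gamma$ and it commutes with $T$. Writing $1_E$ for the indicator of $E$ in $L^\infty(X,\mu)$, we have
\begin{equation*}
\mu\big(E\wedge T^\gamma(E)\wedge S^\gamma T^\gamma(E)\big)=\int_X 1_E\cdot(T^\gamma 1_E)\cdot(R^\gamma 1_E)\,d\mu ,
\end{equation*}
so \eqref{mainlimit} is obtained by integrating $1_E$ against the bilinear ergodic average $A_{\Phi_\alpha}(1_E,1_E)$, where $A_\Phi(f,g):=\tfrac{1}{|\Phi|}\sum_{\gamma\in\Phi}(T^\gamma f)(R^\gamma g)$ for finite $\Phi\subseteq\Gamma$. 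Thus \eqref{mainlimit} is an instance of the commuting double recurrence average, with the two ``directions'' being the commuting actions $T$ and $R$.

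For existence and net-independence I would invoke the $L^2$-norm convergence of such bilinear averages for commuting actions of an arbitrary amenable group on an arbitrary probability algebra, which is exactly the content of Zorin-Kranich's extension \cite{zorin2016norm} of Walsh's theorem \cite{walsh2012norm} to the uncountable, inseparable setting (applied to $(X,\mu)$, or to a concrete model of it, see Section~\ref{sec-canmodel}). This gives $A_{\Phi_\alpha}(1_E,1_E)\to A$ in $L^2(X,\mu)$ along every left F\o lner net for some $A$; integrating against $1_E$ yields existence of the limit \eqref{mainlimit}. Net-independence is then automatic: the limit object $A$ produced by the convergence theorem is intrinsic (it is the projection of $(1_E,1_E)$ onto a characteristic factor, which does not see the choice of net), and, in any case, one can compare any two left F\o lner nets by combining them into a single left F\o lner net that contains cofinal copies of both and applying the convergence theorem to it.

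The substantial point is positivity. Using net-independence we may work along a convenient left F\o lner net. The idea is to produce, via the structure theory of probability-algebra $\Gamma$-systems developed in \cite{jt-foundational,jt19,jt20,jamneshan2019fz}, a factor $\pi\colon(X,\mu,T,R)\to(Z,\nu,T,R)$ which is of compact (distal) type over the trivial factor --- a tower of isometric extensions --- and which is characteristic for the average $A_\Phi$, so that
\begin{equation*}
\lim_{\alpha\in A}\frac{1}{|\Phi_\alpha|}\sum_{\gamma\in\Phi_\alpha}\mu\big(E\wedge T^\gamma E\wedge R^\gamma E\big)=\lim_{\alpha\in A}\frac{1}{|\Phi_\alpha|}\sum_{\gamma\in\Phi_\alpha}\nu\big(\pi E\wedge T^\gamma(\pi E)\wedge R^\gamma(\pi E)\big),
\end{equation*}
the conditional expectation onto $Z$ playing the role of the (unavailable) disintegration of measures. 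On $Z$ the positivity argument of \cite{bergelson1997roth} applies essentially verbatim: exploiting the (relative) Kronecker description of the isometric extensions together with an averaging argument over the structure groups, one bounds the right-hand side below by a quantity depending only on $\nu(\pi E)$, which is positive since $\mu(E)>0$.

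\emph{The main obstacle} is the positivity step: one has to carry the characteristic-factor reduction and the compact-extension recurrence argument of \cite{bergelson1997roth} through entirely within the probability-algebra framework --- for uncountable acting groups, inseparable spaces, and F\o lner nets rather than sequences --- replacing disintegration of measures, ergodic decomposition and measurable selection throughout by conditional expectations, relatively independent products and the abstract Mackey/structure machinery of \cite{jt-foundational,jt19,jt20,jamneshan2019fz}. A minor additional point is the passage between the abstract probability algebra $(X,\mu)$ and a concrete model of it whenever one wishes to apply the cited convergence and structure results.
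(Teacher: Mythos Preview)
Your overall plan is sound --- cite Zorin-Kranich for convergence and net-independence, then reduce positivity via characteristic factors --- and it matches the paper's outline. However, your description of the characteristic factor contains a genuine error that would block the argument as written.

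The characteristic factors for these averages are \emph{not} compact over the trivial factor. Following Bergelson--McCutcheon--Zhang, the paper shows (Lemma~\ref{lem-characteristic}) that the relevant factors are the \emph{conditional} Kronecker factors $Z_T$ and $Z_{ST}$: the maximal compact extensions of $Y=\Inv(X,\mu,S)$, the $S$-invariant factor (equivalently, the invariant factor of the ``difference'' action $\gamma\mapsto R^\gamma(T^\gamma)^{-1}$ in your notation). There are \emph{two} of them, one for each of the second and third slots, and they need not coincide or form a tower of isometric extensions over the trivial factor. The correct reduction replaces the second and third copies of $1_E$ by $\E(1_E\mid Z_T)$ and $\E(1_E\mid Z_{ST})$ respectively, keeping the first $1_E$ intact; your displayed identity with a single projection $\pi E$ onto a common distal factor is not the right object.

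This matters for positivity: the entire finiteness input comes from the fact that these conditional expectations are approximable by elements of finitely generated, invariant $L^\infty(Y)$-submodules of $L^2(X)$, which lets one cover the orbits $\{T^\gamma f\}$ and $\{R^\gamma g\}$ by finitely many balls in the conditional seminorm $\|\cdot\|_{X|Y}$ (the paper's conditional Heine--Borel lemma, Appendix~\ref{sec-canalysis}). A pigeonhole argument over $Y$ then produces many $\gamma$ with large triple correlation, and one concludes via syndeticity. Working over the trivial factor you would not have this module structure, and the phrase ``applies essentially verbatim'' hides precisely the step --- replacing direct integrals of Hilbert bundles and measurable selection by a conditional Gram--Schmidt process and a conditional covering argument --- that is the technical heart of the paper and the main point where the uncountable/inseparable setting genuinely bites.
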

From the uncountable ergodic Roth theorem we deduce several applications.  In these applications, the below corollary (Corollary~\ref{maincor}) is crucial. We recall the notion of syndeticity:
  a subset $E$ of a discrete group $\Gamma$ is said to be (left) syndetic if finitely many (left) shifts of $E$ cover all of $\Gamma$, more explicitly, if  there exists a finite set $F$ in $\Gamma$ such that $\bigcup_{\gamma \in F} \gamma E=\Gamma$.

    \begin{corollary}\label{maincor}

Suppose that $E\in X$ with $\mu(E)>0$ in the setting of Theorem \ref{thm-uncountableroth}.
Then there exists $\delta>0$ such that
\begin{equation*}
\{\gamma\in \Gamma : \mu(E\wedge T^\gamma E\wedge S^\gamma T^\gamma E)>\delta\}
\end{equation*}
is (left) syndetic.
\end{corollary}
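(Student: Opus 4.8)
The plan is to derive Corollary~\ref{maincor} from Theorem~\ref{thm-uncountableroth} by a standard compactness/averaging argument, adapted to the setting of a general (possibly uncountable) amenable group $\Gamma$, where one must use F\o lner nets rather than sequences. Fix $E\in X$ with $\mu(E)>0$. By Theorem~\ref{thm-uncountableroth}, the limit in \eqref{mainlimit} equals some number $c>0$, and it is independent of the choice of left F\o lner net. Set $\delta:=c/2>0$; I claim the set
\[
R:=\{\gamma\in\Gamma : \mu(E\wedge T^\gamma E\wedge S^\gamma T^\gamma E)>\delta\}
\]
is left syndetic.

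First I would argue by contradiction: suppose $R$ is not left syndetic. The complement of left syndeticity means that for every finite $F\subseteq\Gamma$ there exists $\gamma\in\Gamma$ with $F\gamma\cap R=\emptyset$, i.e.\ $F\gamma\subseteq R^c$; equivalently, the family $\{F^{-1}R^c : F\subseteq\Gamma \text{ finite}\}$ has the finite intersection property (since $F_1\gamma_1, F_2\gamma_2,\dots$ can be combined by taking $F=F_1\cup F_2\cup\cdots$). The key classical fact I would invoke (and which belongs in Appendix~\ref{sec-amsyn} on amenability and syndeticity) is that if $R$ is \emph{not} left syndetic, then there is a left F\o lner net $(\Phi_\alpha)_{\alpha\in A}$ for $\Gamma$ with $\Phi_\alpha\subseteq R^c$ for all $\alpha$. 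The cleanest route: amenability gives a left-invariant mean $m$ on $\ell^\infty(\Gamma)$; non-syndeticity of $R$ lets one produce, for each finite $F$, a translate of $F$ avoiding $R$, and a compactness argument (or directly the characterization of syndeticity via invariant means: $R$ is syndetic iff $\gamma R$ covers $\Gamma$ for finitely many $\gamma$ iff $R^c$ contains no ``large'' set... more precisely, $R$ not syndetic implies $R^c$ is thick, i.e.\ contains translates of every finite set, and a thick set supports a F\o lner net) yields a left F\o lner net entirely inside $R^c$. I would state this as a lemma, proved in the appendix.

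Granting such a F\o lner net $(\Phi_\alpha)$ contained in $R^c$, every $\gamma\in\Phi_\alpha$ satisfies $\mu(E\wedge T^\gamma E\wedge S^\gamma T^\gamma E)\le\delta$, hence
\[
\frac{1}{|\Phi_\alpha|}\sum_{\gamma\in\Phi_\alpha}\mu(E\wedge T^\gamma E\wedge S^\gamma T^\gamma E)\le\delta=\tfrac{c}{2}
\]
for every $\alpha$. Taking the limit along this net and using the F\o lner-net-independence asserted in Theorem~\ref{thm-uncountableroth}, the left-hand side converges to $c$, giving $c\le c/2$, a contradiction since $c>0$. Therefore $R$ is left syndetic, with the explicit constant $\delta=c/2$.

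**The main obstacle** is the combinatorial lemma converting ``not left syndetic'' into ``there is a left F\o lner net in the complement.'' For countable amenable groups this is folklore (thick sets contain F\o lner sequences); for general amenable groups one must be careful to produce a genuine F\o lner \emph{net}, working with the net-version of the F\o lner condition and the directed set of finite subsets of $\Gamma$ paired with error tolerances. I expect this to be a short but slightly delicate argument using that a thick set $T$ (one meeting every translate of every finite set, equivalently $R^c$ when $R$ is non-syndetic) can be intersected with the near-invariant sets guaranteed by amenability: given finite $F\subseteq\Gamma$ and $\eps>0$, pick a finite $\Psi$ with $|F\Psi\,\triangle\,\Psi|<\eps|\Psi|$ and then a translate $\Psi\gamma\subseteq R^c$; the collection of such $\Psi\gamma$, indexed by $(F,\eps)$ directed by refinement, is the desired F\o lner net. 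Everything else in the proof is a one-line averaging estimate plus the invariance clause of Theorem~\ref{thm-uncountableroth}, which is exactly why that clause was built into the statement.
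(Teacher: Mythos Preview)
Your proposal is correct and follows essentially the same approach as the paper's proof: both argue by contradiction, use non-syndeticity to produce a left F\o lner net lying entirely in the complement of the return-time set (via the right-translate construction $\Phi_\alpha\gamma_\alpha\subseteq R^c$, which is exactly the argument in the paper's Appendix~\ref{sec-amsyn} and is repeated inline in the paper's proof), and then derive a contradiction from the F\o lner-net-independence of the limit in Theorem~\ref{thm-uncountableroth}. The only cosmetic difference is that you fix $\delta=c/2$ at the outset and obtain the contradiction $c\le c/2$ in one step, whereas the paper assumes $G_n=\{\gamma:\mu(E\wedge T^\gamma E\wedge S^\gamma T^\gamma E)>1/n\}$ is non-syndetic for every $n$ and concludes the limit is $\le 1/n$ for all $n$, hence zero; your version is slightly more direct and gives the explicit constant, but the substance is identical.
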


Analogous statements hold if in Theorem \ref{thm-uncountableroth} and Corollary \ref{maincor} we   replace the notions of left F\o lner nets and  left syndeticity by the analogous notions of right F\o lner nets and  right syndeticity, respectively.

\subsection*{Applications}
First, we recall the notion of an  {\it invariant mean}.  An  {invariant mean} for $\Gamma$ is a positive linear functional $m: \ell^\infty(\Gamma)\to \R$ with the properties that $m(1)=1$ and $m(\gamma f)=m(f)$, where $(\gamma f)(\gamma'):=f (\gamma^{-1}\gamma')$ and $\gamma\in \Gamma,f\in \ell^{\infty}(\Gamma)$.
It is well know that a  discrete group is amenable if and only if it admits an invariant mean, e.g. see \cite{paterson}.

Using an uncountable version of  the Furstenberg correspondence principle we  deduce the following combinatorial result on   triangular patterns in $\Gamma\times\Gamma$, where $1_\Lambda$ denotes the characteristic function of a set $\Lambda$.
\begin{theorem}\label{prop-triangular}
Let $\Gamma$ be a discrete amenable group and  $m:\ell^\infty(\Gamma\times\Gamma)\to \R$ be an invariant mean. Suppose that $\Lambda\subset \Gamma \times \Gamma$ satisfies $m(1_\Lambda)>0$. Then the set
\begin{equation*}
\{\gamma\in \Gamma :  \text{ there exists }  (\theta,\zeta)\in \Gamma\times \Gamma \text{ such that } (\theta,\zeta),(\gamma \theta,\zeta),(\gamma \theta,\gamma \zeta)\in \Lambda \}
\end{equation*}
is syndetic.
 \end{theorem}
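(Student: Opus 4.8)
The plan is to deduce Theorem~\ref{prop-triangular} from Corollary~\ref{maincor} via a version of Furstenberg's correspondence principle adapted to the uncountable, inseparable setting. Set $G := \Gamma\times\Gamma$; since $\Gamma$ is amenable so is $G$, and we regard $m$ as a left-invariant mean on $\ell^\infty(G)$. Consider the compact (in general non-metrizable) space $\Omega := \{0,1\}^G$ equipped with the shift action $(g\bullet\omega)(h) := \omega(hg)$, which is a left action of $G$ by homeomorphisms, together with the distinguished point $\omega_\Lambda := 1_\Lambda\in\Omega$. The two subgroups $\Gamma\times\{e\}$ and $\{e\}\times\Gamma$ of $G$ commute, so the restrictions of $\bullet$ to them yield two commuting $\Gamma$-actions on $\Omega$ by coordinate translations; with the correct choice of which one we call $T$ and which $S$, the configuration $x,\,T^\gamma x,\,S^\gamma T^\gamma x$ will encode the triangular pattern $(\theta,\zeta),(\gamma\theta,\zeta),(\gamma\theta,\gamma\zeta)$.

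The next step is to transport $m$ to an invariant probability measure on $\Omega$. For a clopen set $C\subseteq\Omega$ (equivalently, a set depending on finitely many coordinates), the function $g\mapsto 1_C(g\bullet\omega_\Lambda)$ is a finite Boolean combination of translates of $1_\Lambda$, hence lies in $\ell^\infty(G)$, and we put $\nu_0(C):=m\big(g\mapsto 1_C(g\bullet\omega_\Lambda)\big)$. Linearity and positivity of $m$ make $\nu_0$ a finitely additive probability measure on the Boolean algebra of clopen subsets of $\Omega$; because every clopen subset of $\Omega$ is compact, $\nu_0$ is continuous at $\emptyset$ and hence a premeasure, so by the Carath\'eodory extension theorem it extends uniquely to a probability measure $\nu$ on the Baire $\sigma$-algebra of $\Omega$ (the $\sigma$-algebra generated by the coordinates). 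Left invariance of $m$ translates into $G$-invariance of $\nu_0$, hence of $\nu$. Passing to the associated probability algebra $(X,\mu)$—the measure algebra of $(\Omega,\nu)$—the $G$-action induces measure-preserving Boolean automorphisms, and letting $T$ and $S$ be the sub-actions coming from the two coordinate translations we obtain an abstract Roth $\Gamma$-dynamical system $(X,\mu,T,S)$. Finally, let $E$ be the class in $X$ of the base cylinder $A:=\{\omega:\omega(e_G)=1\}$; by construction $\mu(E)=\nu_0(A)=m(1_\Lambda)>0$.

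It then remains to unwind the combinatorics, which is bookkeeping. For each $\gamma\in\Gamma$ the element $E\wedge T^\gamma E\wedge S^\gamma T^\gamma E$ is the class of a clopen set cut out by three coordinate constraints, and the definition of $\nu_0$ yields $\mu(E\wedge T^\gamma E\wedge S^\gamma T^\gamma E)=m(1_{\Lambda_\gamma})$, where (for the correct naming of $T$ and $S$ and the correct translation conventions) $\Lambda_\gamma=\Lambda\cap(e,\gamma)\Lambda\cap(\gamma,\gamma)\Lambda$. In particular, if this quantity is positive then $\Lambda_\gamma\neq\emptyset$, and any $(p,q)\in\Lambda_\gamma$ gives, upon setting $\theta:=\gamma^{-1}p$ and $\zeta:=\gamma^{-1}q$, the memberships $(\theta,\zeta),(\gamma\theta,\zeta),(\gamma\theta,\gamma\zeta)\in\Lambda$. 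Now Corollary~\ref{maincor}, applied to $(X,\mu,T,S)$ and $E$, produces $\delta>0$ such that $\{\gamma:\mu(E\wedge T^\gamma E\wedge S^\gamma T^\gamma E)>\delta\}$ is syndetic; by the previous sentence this set is contained in the set appearing in Theorem~\ref{prop-triangular}, and since every superset of a syndetic set is syndetic, we are done.

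The step I expect to be the main obstacle is the correspondence itself in the inseparable regime: one has to be careful that the finitely additive mean really does give rise to a genuine, $\sigma$-additive, invariant probability measure on the non-metrizable space $\Omega$, and hence to a bona fide probability algebra dynamical system, with the $G$-action descending correctly to the measure algebra—this is exactly the kind of foundational issue addressed in \cite{jt-foundational,jt19,jt20,jamneshan2019fz}, and it is why the inseparable Theorem~\ref{thm-uncountableroth}, rather than the classical countable ergodic Roth theorem, is what the argument needs. A minor but fussy point is to pin down the left/right translation and inversion conventions so that $\Lambda_\gamma$ is exactly the intersection of translates matching the target pattern; the left–right symmetric form of Corollary~\ref{maincor} mentioned after its statement provides enough slack to accommodate any leftover asymmetry.
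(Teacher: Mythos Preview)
Your proposal is correct and follows essentially the same route as the paper: build the shift system on $\{0,1\}^{\Gamma\times\Gamma}$, push the invariant mean forward to a Baire probability measure via the cylinder-set/Carath\'eodory argument (this is the paper's Lemma~\ref{lem-correspondence}), pass to the associated probability algebra, and apply Corollary~\ref{maincor}. The only noteworthy difference is in the endgame: the paper restricts to the orbit closure $X=\overline{\{U^{(\theta,\zeta)}(1_\Lambda)\}}$ and, once $\mu(A\cap T^\gamma A\cap S^\gamma T^\gamma A)>0$, uses that $A$ is \emph{open} and that the orbit of $1_\Lambda$ is dense in $X$ to locate an actual shift of $1_\Lambda$ inside the triple intersection, whereas you stay on all of $\Omega$ and instead use the more direct observation that $\mu(E\wedge T^\gamma E\wedge S^\gamma T^\gamma E)=m(1_{\Lambda_\gamma})>0$ forces $\Lambda_\gamma\neq\emptyset$; your extraction step is a bit cleaner, while the paper's orbit-closure formulation is closer to the classical correspondence-principle tradition.
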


Our next application is a uniformity result. For the sake of better understanding, we first state it in the case of a single $\mathbb{Z}$-action (Theorem~\ref{thm-Zsyndec}) before we formulate it in the most general form (Theorem~\ref{thm-syndec}).
The $\mathbb{Z}$-case is well known, e.g.~see \cite{bergelson-uniform}. 
The focus is on a quantitative uniformity aspect of the syndeticity of the set of double return times
\begin{equation*} \{n\in \Z\colon \mu(E \cap T^n(E)\cap T^{2n}(E))>0\}\end{equation*}
for a set $E\in \mathcal{X}$, where $(X,\mathcal{X},\mu,T)$ is an arbitrary measure-preserving $\Z$-dynamical system.
A suitable way to quantify syndeticity uses the notion of lower Banach density.
Recall that the lower Banach density of a subset $A \subset \Z$ is defined as
\begin{equation}
    \label{def:lowerbd-Z}
    \underline{\mathtt{BD}}_\Z(A)=\liminf_{b-a \to \infty}\frac{|A\cap  \{a,a+1,\ldots,b\}|}{b-a+1}.
\end{equation}
We obtain the following result.
\begin{theorem}\label{thm-Zsyndec}
For every $\ep>0$ there exist $\delta,\eta>0$ (depending only on $\ep$) such that for any measure-preserving $\Z$-dynamical system $(X,\mathcal{X},\mu,T)$ and every $E\in \mathcal{X}$ with $\mu(E)\geq\ep$,
\begin{equation*}
\underline{\mathtt{BD}}_\Z(\{n\in \Z\colon \mu(E \cap T^n(E)\cap T^{2n}(E))>\delta\})>\eta.
\end{equation*}
\end{theorem}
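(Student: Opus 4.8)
The plan is to extract from Roth's theorem a positive lower bound for the finite Roth averages that is uniform over all systems with $\mu(E)\ge\ep$, and then to use Theorem~\ref{thm-uncountableroth} --- specifically, the independence of the Roth limit from the choice of F\o lner net --- to spread that bound across every translate of a F\o lner interval at once, which is precisely what a lower Banach density estimate requires.

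\textbf{Step 1 (uniform lower bound for finite averages).} I would first show that there are $\delta_0=\delta_0(\ep)>0$ and $N_0=N_0(\ep)\in\N$, depending only on $\ep$, such that for every measure-preserving $\Z$-system $(X,\mathcal{X},\mu,T)$ and every $E\in\mathcal{X}$ with $\mu(E)\ge\ep$ one has $\tfrac1N\sum_{n=1}^N\mu(E\cap T^nE\cap T^{2n}E)\ge\delta_0$ whenever $N\ge N_0$. Writing $g(n):=\mu(E\cap T^nE\cap T^{2n}E)$, using $T$-invariance of $\mu$ in the form $g(n)=\mu(T^jE\cap T^{j+n}E\cap T^{j+2n}E)$ for all $j\in\Z$, and introducing an auxiliary integer $M\ge 4N/\ep$ (which will cancel), one obtains the identity
\[
\sum_{n=1}^N g(n)=\frac1M\int_X \#\bigl\{(j,n)\in[0,M)\times[1,N]\colon x\in T^jE\cap T^{j+n}E\cap T^{j+2n}E\bigr\}\,d\mu(x).
\]
Setting $S_x:=\{i\in\Z\colon x\in T^iE\}$ one has $\int_X|S_x\cap[0,M)|\,d\mu=M\mu(E)\ge\ep M$, so by the reverse Markov inequality on a set of $x$ of measure $\ge\ep/2$ the set $S_x$ has density $\ge\ep/2$ in $[0,M)$. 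For such $x$, partitioning $[0,M)$ into $\lfloor M/N\rfloor$ consecutive blocks of length $N$, a further application of reverse Markov shows that at least an $\ep/8$ fraction of the blocks carry $S_x$-density $\ge\ep/8$; on each of those blocks Roth's theorem in Varnavides' averaged form \cite{roth1952} produces at least $c(\ep)N^2$ three-term progressions inside $S_x$, necessarily of common difference $<N/2$, and since the blocks are disjoint these are counted without repetition. Hence the integrand above is at least $c'(\ep)MN$ on that set, giving $\sum_{n=1}^N g(n)\ge\tfrac\ep2 c'(\ep)N$, which is the claim.

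\textbf{Step 2 (upgrade via a translated F\o lner net).} Passing to the probability algebra associated to $(X,\mathcal{X},\mu)$, letting $\overline T$ be the induced automorphism and $\overline S:=\overline T$, the quadruple $(X,\mu,\overline T,\overline S)$ is an abstract Roth $\Z$-dynamical system and $g(n)$ equals the corresponding triple meet. I would apply Theorem~\ref{thm-uncountableroth} to the family $\Phi_{(N,a)}:=\{a+1,\dots,a+N\}$ indexed by $(N,a)\in\N\times\Z$, directing the index set by $(N,a)\preceq(N',a')\iff N\le N'$; since $|\Phi_{(N,a)}\mathbin{\Delta}(\gamma+\Phi_{(N,a)})|\le 2|\gamma|$ for all $\gamma\in\Z$, this is a left F\o lner net for $\Z$. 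Theorem~\ref{thm-uncountableroth} then yields a constant $c$ with
\[
\lim_{N\to\infty}\ \sup_{a\in\Z}\ \Bigl|\tfrac1N\sum_{n=a+1}^{a+N}g(n)-c\Bigr|=0,
\]
and, specializing to $a=0$ and invoking Step~1, $c=\lim_{N\to\infty}\tfrac1N\sum_{n=1}^N g(n)\ge\delta_0$.

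\textbf{Step 3 (conclusion).} Pick $N$ so large that $\sup_a\bigl|\tfrac1N\sum_{n=a+1}^{a+N}g(n)-c\bigr|<\delta_0/2$; then $\tfrac1N\sum_{n=a+1}^{a+N}g(n)>\delta_0/2$ for every $a$, and since $0\le g\le 1$, splitting the sum according to whether $g(n)>\delta_0/4$ gives $|\{n\in(a,a+N]\colon g(n)>\delta_0/4\}|\ge(\delta_0/4)N$ for every $a$. As $a$ is arbitrary and $N$ may be taken arbitrarily large, it follows that $\underline{\mathtt{BD}}_\Z(\{n\in\Z\colon\mu(E\cap T^nE\cap T^{2n}E)>\delta_0/4\})\ge\delta_0/4$, so $\delta:=\delta_0(\ep)/4$ and $\eta:=\delta_0(\ep)/4$ work and depend only on $\ep$.

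The main obstacle is Step~1. Theorem~\ref{thm-uncountableroth} and Corollary~\ref{maincor} give positivity of the Roth average only one system at a time, and the tempting way to force uniformity --- forming an (inseparable) ultraproduct of a sequence of putative counterexamples and applying Theorem~\ref{thm-uncountableroth} to it --- does \emph{not} close, because the convergence of the finite averages to their limit is not uniform in the system, so the average over the ultraproduct is not controlled by the averages of the factors. A genuinely finitary input --- Varnavides' theorem --- therefore appears unavoidable, after which Theorem~\ref{thm-uncountableroth} performs only the soft task of transporting the resulting lower bound to all translates of the F\o lner interval simultaneously.
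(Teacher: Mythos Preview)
Your argument is correct, but it proceeds along a genuinely different route from the paper. The paper does not prove Theorem~\ref{thm-Zsyndec} on its own; it is absorbed into Theorem~\ref{thm-syndec}, whose proof is precisely the ultraproduct contradiction you set aside in your closing paragraph. One assumes a sequence of counterexample systems, forms the ultraproduct Roth system (which is where the uncountable, inseparable setting is essential), applies Corollary~\ref{maincor} to obtain that the return-time set in the ultraproduct group has positive lower Banach density, and then --- this is the step you did not anticipate --- transfers that bound back to the factors via a Loeb-measure construction together with a Hahn--Banach extension theorem for invariant means (Lemmas~\ref{lem-dens2} and~\ref{lem-density1}). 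Your diagnosis that ergodic \emph{averages} on the ultraproduct are not controlled by those on the factors is accurate, but the paper never compares averages: the set $\{\gamma_*:\mu_*(E_*\wedge T_*^{\gamma_*}E_*\wedge S_*^{\gamma_*}T_*^{\gamma_*}E_*)>\delta\}$ is internal, and Banach densities of internal sets \emph{do} transfer down. So your final assertion that a finitary input is unavoidable is incorrect.

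As for what each route buys: yours is more elementary and effective for $\Z$ --- Varnavides yields an explicit $\delta_0(\ep)$, and the device of indexing the F\o lner net $\Phi_{(N,a)}$ by a preorder depending only on $N$, so that net convergence in Theorem~\ref{thm-uncountableroth} becomes convergence uniform in $a$, is a clean trick. But it is tied to $\Z$ through Varnavides and does not extend in any obvious way to the uniformly amenable classes of Theorem~\ref{thm-syndec}, which is the paper's real target and the reason the uncountable framework was developed in the first place. A minor point: the citation \cite{roth1952} in Step~1 should be to Varnavides, not Roth.
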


\begin{remark}
The crucial of this uniformity result is that we can choose $\delta,\eta$ uniformly over \emph{all} measure-preserving $\Z$-dynamical systems and \emph{all} measurable sets $E\in \X$ depending \emph{only} on the size of the measure of $E$.
\end{remark}

\begin{remark}
In contrast to the previously discussed uniformity results, we do not need to assume ergodicity. In fact, it is shown in \cite[Theorem 2.1]{bergelson-nilseq} that a Khintchine-type uniform lower bound fails if one removes the hypothesis of ergodicity.
\end{remark}

We generalize Theorem \ref{thm-Zsyndec} from $\Z$-measure-preserving dynamical systems to  all Roth-type $\Gamma$-measure-preserving dynamical systems where $\Gamma$ is {uniformly amenable}, or more generally belongs to a uniformly amenable set of groups. The notion of uniform amenability was introduced  by Keller \cite{keller}.
It can be viewed as a uniform version of the F\o lner  condition,  in the following sense. A discrete group $\Gamma$ is said to be \emph{uniformly amenable} if there exists a function $F:\mathbb{N}\times (0,1)\to \mathbb{N}$ such that for every set $\Psi\subset \Gamma$ with $|\Psi|\leq n$ and $0<\ep<1$ there exists a set $\Phi\subset\Gamma$ with $|\Phi|\leq F(n,\ep)$ such that
\begin{equation}
    \label{def:unifam}
    \max_{\gamma\in \Psi} |\Phi \Delta \gamma\Phi|\leq \ep |\Phi|.
\end{equation}
More generally, a set $\mathcal{G}$ of discrete groups is said to be uniformly amenable if there exists a function  $F:\mathbb{N}\times (0,1)\to \mathbb{N}$ such that each group $\Gamma\in \mathcal{G}$ is uniformly amenable with respect to $F$.  A more detailed account on  uniform amenability can be found in Appendix \ref{sec-amsyn}.

In order to state this more general uniform syndeticity result, we also introduce the notion of lower Banach density for a subset $\Lambda$ of a discrete amenable group $\Gamma$ by setting
\begin{equation}
    \label{def:lbd-gen}
    \underline{\mathtt{BD}}_\Gamma(\Lambda)\coloneqq \inf\{ \nu(\Lambda): \nu \text{ is an invariant finitely additive probability measure} \},
\end{equation}
where a   finitely additive probability measure $\nu:\mathcal{P}(\Gamma)\to [0,1]$ is called \emph{invariant}  if $\nu(\gamma \Lambda)=\nu(\Lambda)$ for all $\gamma\in\Gamma$ and $\Lambda\subset \Gamma$, where $\gamma \Lambda=\{\gamma \tilde{\gamma}:\tilde{\gamma}\in \Lambda\}$\footnote{That the definition given in \eqref{def:lbd-gen} is equivalent to the definition above \eqref{def:lowerbd-Z} for $\Gamma=\mathbb{Z}$ is discussed in Appendix \ref{appendix_B1}.}.

Using these notions, we establish an extended version of  Theorem \ref{thm-Zsyndec}. 
We show the existence of a lower bound on the degree of syndeticity uniformly over a class of Roth-type measure-preserving dynamical systems for a uniformly amenable set of groups, as follows.
\begin{theorem}\label{thm-syndec}
Let $\mathcal{G}=\mathcal{G}(F)$ be a uniformly amenable set of groups and let $\ep>0$. Then there exist $\delta,\eta>0$,  depending only on $\ep$ and $\mathcal{G}$, such that for every $\Gamma\in\mathcal{G}$, any abstract Roth $\Gamma$-dynamical system $(X,\mu,T,S)$, and each $E\in X$ with $\mu(E)\geq \ep$,
\begin{equation*}
\BD_\Gamma(\{\gamma\in \Gamma\colon \mu(E \wedge T^\gamma(E)\wedge S^\gamma T^{\gamma}(E))>\delta\})>\eta.
\end{equation*}
\end{theorem}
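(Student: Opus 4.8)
The plan is to deduce the uniform bound from Corollary \ref{maincor} by a compactness/contradiction argument, the key point being that both the quality of recurrence (the value $\delta$) and the quality of syndeticity (the density $\eta$) must be extracted uniformly over the whole uniformly amenable family $\mathcal{G}$. First I would reduce the lower-Banach-density statement to a purely combinatorial syndeticity statement with an effective bound: using the function $F$ witnessing uniform amenability of $\mathcal{G}$, I claim that if $A\subset\Gamma$ is $k$-syndetic (i.e.\ $\bigcup_{\gamma\in F_0}\gamma A=\Gamma$ for some $|F_0|\le k$), then $\BD_\Gamma(A)\ge 1/F(k,1/2)$ or some similar explicit quantity depending only on $k$ and $F$. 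This is the place where uniform amenability, rather than mere amenability, is essential: one tiles a large F\o lner set $\Phi$ (chosen so that $\max_{\gamma\in F_0}|\Phi\Delta\gamma\Phi|\le\tfrac12|\Phi|$, with $|\Phi|\le F(k,1/2)$) by the translates $\{\gamma\Phi\}$ and counts, obtaining that $A$ meets every such F\o lner set in a proportion bounded below by a function of $k$ and $F$ alone; averaging against any invariant finitely additive probability measure then gives the same lower bound on $\BD_\Gamma(A)$. Thus it suffices to produce, for each $\ep>0$, a $\delta>0$ and an integer $k$, depending only on $\ep$ and $\mathcal{G}$, such that for \emph{every} $\Gamma\in\mathcal{G}$, every abstract Roth $\Gamma$-dynamical system, and every $E$ with $\mu(E)\ge\ep$, the set $R_\delta(E):=\{\gamma:\mu(E\wedge T^\gamma E\wedge S^\gamma T^\gamma E)>\delta\}$ is $k$-syndetic.

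Next I would establish this uniform $k$-syndeticity by contradiction. Suppose it fails for some fixed $\ep$: then for every $n\in\N$ there are $\Gamma_n\in\mathcal{G}$, an abstract Roth $\Gamma_n$-system $(X_n,\mu_n,T_n,S_n)$, and $E_n\in X_n$ with $\mu_n(E_n)\ge\ep$, such that $R_{1/n}(E_n)$ is not $n$-syndetic; equivalently, for each $n$ there is a finite set $\Psi_n\subset\Gamma_n$ whose $\Psi_n$-translates miss $R_{1/n}(E_n)$, i.e.\ $\gamma^{-1}\Psi_n\cap R_{1/n}(E_n)=\varnothing$ is too strong — more precisely, $R_{1/n}(E_n)$ has empty intersection with arbitrarily long "gap-free" configurations. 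The standard way to exploit this is to pass to a product/ultraproduct dynamical system: form the ultraproduct group $\Gamma_\infty=\prod_n\Gamma_n/\mathcal{U}$ along a nonprincipal ultrafilter $\mathcal{U}$, which is again amenable (indeed uniformly amenable with the \emph{same} function $F$, since the F\o lner-witness condition is first-order expressible with the bound $F(n,\ep)$ transferring through the ultraproduct — this is exactly the robustness property of uniform amenability recorded in Appendix \ref{sec-amsyn}), together with the ultraproduct probability algebra and the ultraproduct commuting actions, and the set $E_\infty=[\,(E_n)_n\,]$ with $\mu_\infty(E_\infty)\ge\ep>0$. This is precisely the kind of construction for which the inseparable/uncountable framework of the paper is designed: the ultraproduct probability algebra is genuinely inseparable, so Theorem \ref{thm-uncountableroth} applies to it whereas a classical separable ergodic theorem would not.

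Applying Corollary \ref{maincor} to $(X_\infty,\mu_\infty,T_\infty,S_\infty)$ and $E_\infty$ yields a $\delta_\infty>0$ such that $R_{\delta_\infty}(E_\infty)$ is syndetic in $\Gamma_\infty$, say $k_\infty$-syndetic for some finite $k_\infty$. I would then transfer this downward: by Łoś's theorem, $\mu_\infty(E_\infty\wedge T_\infty^\gamma E_\infty\wedge S_\infty^\gamma T_\infty^\gamma E_\infty)>\delta_\infty$ for $\gamma=[(\gamma_n)]$ forces $\mu_n(E_n\wedge T_n^{\gamma_n}E_n\wedge S_n^{\gamma_n}T_n^{\gamma_n}E_n)>\delta_\infty$ for $\mathcal{U}$-almost every $n$, and $k_\infty$-syndeticity of $R_{\delta_\infty}(E_\infty)$ pushes down to $k_\infty$-syndeticity of $R_{\delta_\infty}(E_n)$ for $\mathcal{U}$-almost every $n$ (the witnessing finite cover in $\Gamma_\infty$ is represented by finite covers in the $\Gamma_n$'s of size $\le k_\infty$). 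But for $n>\max(1/\delta_\infty,k_\infty)$ this contradicts the assumed failure, since $R_{\delta_\infty}(E_n)\subset R_{1/n}(E_n)$ once $1/n<\delta_\infty$, and the latter was assumed not $n$-syndetic while the former is $k_\infty$-syndetic with $k_\infty<n$. This contradiction establishes the required uniform $\delta=\delta_\infty$ and $k=k_\infty$ (depending only on $\ep$ and $\mathcal{G}$ via the fixed function $F$), and combined with the first paragraph's combinatorial lemma yields the stated $\eta=\eta(\ep,\mathcal{G})$.

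I expect the main obstacle to be the careful handling of the ultraproduct of probability algebras and commuting actions — in particular verifying that $\Aut$-valued cocycles, commutativity, and measure-preservation all survive the ultraproduct construction in the point-free setting, and that $\mu_\infty(E_\infty)\ge\ep$ (not just $>0$) is preserved — together with confirming that uniform amenability of $\mathcal{G}$ with function $F$ indeed passes to $\Gamma_\infty$ with the same $F$; this robustness is the crux that makes $\delta,\eta$ depend only on $\ep$ and on $F$. If one prefers to avoid ultraproducts, an alternative is a direct quantitative argument: run the proof of Theorem \ref{thm-uncountableroth} keeping explicit track of constants and checking that every estimate depends on $\Gamma$ only through $F$ and the F\o lner-set sizes it bounds; but this seems considerably more delicate than the soft ultraproduct route, so I would pursue the contradiction argument above.
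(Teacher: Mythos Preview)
Your approach is correct and is essentially the same ultraproduct/contradiction strategy as the paper's, but you route the transfer through \emph{$k$-syndeticity} rather than through lower Banach density directly, and this is a genuine simplification. The paper contradicts the hypothesis by comparing $\BD_{\Gamma_*}$ on the ultraproduct group with $\st(\lim_{n\to p}\BD_{\Gamma_n})$ on the factors; this comparison is the delicate step (their Lemmas~\ref{lem-dens2} and~\ref{lem-density1}), requiring a Loeb-measure construction followed by Silverman's Hahn--Banach extension theorem to produce an invariant mean on $\Gamma_*$ extending the internal one. Your argument bypasses this entirely: $k_\infty$-syndeticity of $R_{\delta_\infty}(E_\infty)$ is witnessed by finitely many group elements and transfers to the factors by a short \L o\'s-style contrapositive (pick bad $h_n$'s, assemble $h=[(h_n)]$, contradict the cover in $\Gamma_*$). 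One small point to watch when you write it out: because $R_{\delta_\infty}(E_\infty)$ involves the standard part, you should transfer $k_\infty$-syndeticity of $R_{\delta_\infty}(E_\infty)$ down to $k_\infty$-syndeticity of $R_{\delta_\infty/2}(E_n)$ (or any $\delta'<\delta_\infty$) to absorb the infinitesimal slack; with that adjustment the contradiction goes through exactly as you sketch.

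One further remark: your first-paragraph reduction is correct but overengineered. If $A$ is $k$-syndetic, say $\bigcup_{i=1}^k \gamma_i A=\Gamma$, then for \emph{any} invariant finitely additive probability measure $\nu$ one has $1\le\sum_i\nu(\gamma_iA)=k\,\nu(A)$, hence $\BD_\Gamma(A)\ge 1/k$; no appeal to the function $F$ or to F\o lner sets is needed here. Uniform amenability enters only where you (correctly) identify it: ensuring that the ultraproduct group $\Gamma_\infty$ is amenable so that Corollary~\ref{maincor} applies.
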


\subsection{Proof methods}
 The proof of the multiple recurrence statement in Theorem \ref{thm-uncountableroth} follows the general outline of  \cite[Theorem 5.2]{bergelson1997roth}.
    However, several adaptations of the strategy in \cite{bergelson1997roth} are required in our uncountable, inseparable, and point-free framework. In particular, in this framework we are lacking a classical disintegration of measures and related tools such as direct integrals of Hilbert bundles. To fill in these gaps, we follow the approach recently developed in \cite{jamneshan2019fz,jt19,jt20,jt-foundational} by the fourth author and Tao.
    More precisely, we work with the canonical model of an abstract Roth $\Gamma$-system which is a compact Hausdorff space (in fact, a Stonean space) equipped with a Baire-Radon probability measure which is invariant under the action of $\Gamma$ by homeomorphsims.
    We review the construction of the canonical model and mention related references in Section \ref{sec-canmodel}.
    One immediate useful consequence of the canonical model is that it leads to a canonical disintegration for abstract factor maps.
    We can use the canonical disintegration to define {\it relatively independent products}.
    The relatively independent product is a relevant construction in order to identify the characteristic factors for the abstract Roth $\Gamma$-systems.
    These characteristic factors are the largest compact factor of the abstract $\Gamma$-systems $(X,\mu,T)$ and $(X,\mu,ST)$ over the invariant factor of $(X,\mu,S)$. Since we do not assume that $(X,\mu,S)$ is ergodic, these compact factors are not necessarily the Kronecker factors.
    We call them the conditional Kronecker factors.
    In Lemma \ref{lem-characteristic}, we establish that the conditional Kronecker factors are characteristic for the abstract Roth-type non-conventional averages. Hence we can project onto these factors and it suffices to establish the multiple recurrence statement in Theorem \ref{thm-uncountableroth} for these projections.
    A step in proving the latter multiple recurrence is a finite dimensional approximation of the $\Gamma$-orbits of functions in certain finitely generated $\Gamma$-invariant $L^\infty$ submodules with respect to the actions of $T$ and $ST$ respectively.
    In the countable-separable framework of \cite[Section 5]{bergelson1997roth}, this approximation is achieved by using direct integrals of Hilbert bundles and measurable selection techniques, e.g., see \cite[Chapter 9]{glasner2015ergodic} for a textbook reference.
   These tools are not available in our setting  even after passing to concrete models. The reason for this is that all these models are typically highly inseparable and the acting group is still uncountable. Therefore, we rely on conditional analysis techniques as developed in \cite{cheridito2015conditional,filipovic2009separation,drapeau2016algebra} instead.
    Our finite dimensional approximation is based  on a conditional Gram-Schmidt process and a conditional Heine-Borel covering lemma which we prove in Appendix \ref{sec-canalysis}. Their statements are inspired by earlier results in the conditional analysis literature.

To deduce Theorem \ref{prop-triangular}, we first establish a version of the Furstenberg correspondence principle for uncountable amenable discrete groups. Then we follow the outline of
 \cite[Theorem 6.2]{bergelson1997roth}.

    The proof of Theorem \ref{thm-Zsyndec} is subsumed into the proof of Theorem \ref{thm-syndec}.
  The latter relies on ultralimit analysis, which is discussed in Appendix \ref{sec-ultra}.
To prove Theorem \ref{thm-syndec}, we proceed by contradiction by assuming that there is a sequence $(X_n,\mu_n,T_n,S_n)$ of abstract Roth $\Gamma_n$-dynamical systems with $\Gamma_n\in\mathcal{G}$ and $E_n\in X_n$ with  $\mu_n(E_n)\geq \ep$ such that
\begin{equation*}
\BD_{\Gamma_n}(\{\gamma\in \Gamma_n\colon \mu(E_n \wedge T^\gamma(E_n)\wedge S^\gamma T^{\gamma}(E_n))>1/n\})\leq 1/n.
\end{equation*}
  We can then form the ultralimit system $(X_*,\mu_*,\Gamma_*,T_*,S_*)$ from this sequence of systems. The uniform amenability hypothesis is relevant here to verify that the ultralimit group $\Gamma_*$ is amenable.
The difficult and crucial step is to relate the sequence of lower Banach densities $\BD_{\Gamma_n}$, $n\in \N$, with the lower Banach density~$\BD_{\Gamma_*}$ of the ultraproduct group. We establish a useful relation by employing a Loeb measure construction and applying a Hahn-Banach extension theorem for invariant means due to Silverman \cite{silverman1,silverman2}.  Then we apply Corollary \ref{maincor} which yields syndeticity of multiple return times for the ultralimit system $(X_*,\mu_*,\Gamma_*,T_*,S_*)$. This leads to the desired contradiction. Notice that in the ultralimit system $(X_*,\mu_*,\Gamma_*,T_*,S_*)$, the probability algebra $(X_*,\mu_*)$ is almost never separable and the group $\Gamma_*$ is almost never countable even if $(X_n,\mu_n)$ were separable probability algebras and the $\Gamma_n$ were countable groups. Thus Theorem \ref{thm-uncountableroth} is essential to establish uniform syndeticity even for the class of systems where a countable group acts on a separable probability algebra.

    \subsection{Organization of the paper}
    In Section \ref{sec-canmodel}, we introduce the formal setup for this paper. In particular, this covers probability algebras, measure preserving dynamical systems, and canonical models and disintegration. In Section \ref{sec-convergence}, we discuss characteristic factors and non-conventional averages and then proceed to proving Theorem \ref{thm-uncountableroth}, Corollary \ref{maincor}, as well as Proposition \ref{prop-triangular}. Section \ref{sec-uniformity} is about the proof of Theorem \ref{thm-syndec}.
    In Appendix \ref{sec-BA}, we review Boolean algebras and the Stone representation theorem.
    In Appendix \ref{sec-amsyn}, we review amenability, uniform amenability, and relate amenability to syndeticity.
    In Appendix  \ref{sec-ultra}, we record an ultralimit construction for a sequence of abstract Roth dynamical systems. Finally, in Appendix \ref{sec-canalysis} we prove a conditional Heine-Borel covering lemma used in the proof of Theorem \ref{thm-uncountableroth}.

  \subsection{Notation and conventions}

  Let $X$ be a set.
  We denote by $\mathcal{P}(X)$ its power set.
  If $X$ is finite, we denote by $|X|$ its cardinality.
   All F\o lner nets are understood to be left F\o lner nets. Similarly, syndeticity is understood as left syndeticity. By symmetry,  all related results in this paper remain true if we replace left F\o lner nets with right F\o lner nets and left syndeticity with right syndeticity.
   Suppose $\Gamma$ is a group, $X$ is a set, and $T:\Gamma\times X\to X$ is a group action. Then we stipulate the convention $T^\gamma(f)=f\circ T^{\gamma^{-1}}$ for a function $f:X\to \C$. This implies that $T^{\gamma_1\gamma_2}(f)=f\circ T^{\gamma_2^{-1}\gamma_1^{-1}}=T^{\gamma_2}(T^{\gamma_1}(f))$ for all $\gamma_1,\gamma_2\in \Gamma$. Due to this property, the induced action on some function space is called {\it antihomomorphic}.
      With the above convention, we also have $T^\gamma(1_E)=1_{T^\gamma(E)}$ for a subset  $E\subset X$.
      For a compact Hausdorff space $X$, we denote by $\Baire(X)$ the Baire $\sigma$-algebra of $X$, i.e. the smallest $\sigma$-algebra generated by the real-valued continuous functions.

      Throughout the paper, all equalities and inequalities between measurable functions and measurable sets are understood in an almost sure sense. As we will deal with different measure spaces at the same time (usually factors or extensions of each others), the almost sure statement is understood with respect to the obvious probability measure. For example, $f=g$ for two functions on a common probability space $(X,\X,\mu)$ is understood as $\mu(\{x:f(x)=g(x)\})=1$.  Similarly for $f<g$, $f\leq g$, $E=F$, and $E\subset F$ where $E,F\in \X$. If a certain property is said to hold \emph{on} a measurable set $E$, then we mean that $\mu(E\Delta F)=0$ for the measurable set $F$ on which this property is satisfied, where $\Delta$ denotes symmetric set difference.

   \section*{Acknowledgments}
RG was partially supported by the Eric and Wendy Schmidt Postdoctoral Award.
AI was partially supported by the Swiss National Science Foundation (project no.\ 181898). 
AJ was supported by DFG-research fellowship JA 2512/3-1. 
The authors would like to express their gratitude to Terence Tao for inspiring this work, for helpful discussions, and for his encouragement. 
The authors thank an anonymous referee for useful comments and suggestions.

   \section{Canonical models and canonical disintegrations}\label{sec-canmodel}

In this section we discuss the formal setup in which our results are phrased and proven. Experts may be able to skip portions of this section. We start by introducing canonical models in order to develop some basics of abstract measure theory and define canonical disintegrations. This will allow us to define relatively independent products of probability algebras.
A more extensive treatment of these topics can be found in \cite{jt-foundational}.

Glasner defines in \cite[Definition 2.14]{glasner2015ergodic} a measure-preserving dynamical system to be a tuple $(X,\mu,\Gamma)$ where $(X,\mu)$ is a \emph{separable} probability algebra and $\Gamma$ is a \emph{countable} group of automorphisms of $(X,\mu)$.
In \cite[Theorem 2.15.1]{glasner2015ergodic}, Glasner then shows that any such measure-preserving dynamical system can be \emph{modeled} by a Cantor measure-preserving system $(\tilde{X},\Borel(\tilde{X}),\tilde{\mu},\tilde{\Gamma})$, where $\tilde{X}=\{0,1\}^\mathbb{N}$ is the Cantor space, $\Borel(\tilde{X})$ is its Borel $\sigma$-algebra, $\tilde{\mu}$ is a Borel probability measure constructed from $\mu$, and $\tilde{\Gamma}$ is a countable group of $\tilde{\mu}$-preserving homeomorphisms of $\tilde{X}$.
To be modeled means here that both systems are isomorphic in the category of probability algebra dynamical systems (see Definition \ref{def-spaces} for a definition of this category).
See also Furstenberg \cite[Section 5.2]{furstenberg2014recurrence} for a closely related construction.

We introduce next the definition of a topological model for measure-preserving dynamical systems $(X,\mu,\Gamma)$ where $(X,\mu)$ is a not necessarily separable probability algebra and $\Gamma$ is a not necessarily countable group.
We call this compact Hausdorff model \emph{canonical} since it satisfies suitable universality properties, we refer the interested reader to \cite[Proposition 7.6]{jt-foundational} for details.
Closely related models haven been suggested either implicitly or explicitly at several occasions in the literature, e.g., see \cite{fremlinvol3,EFHN} and \cite{jt-foundational} for a list of other references.

We define the three categories of dynamical systems employed in this work.

\begin{definition}[Concrete and abstract measure-preserving dynamical systems] \label{def-spaces}
Let $\Gamma$ be a discrete group.
\begin{itemize}
    \item[(i)] (The category $\ConcProb_\Gamma$ of concrete measure-preserving dynamical systems) A \emph{concrete probability space} is a triple $(X,\X,\mu)$ where $X$ is a set, $\X$ is a $\sigma$-algebra of subsets of $X$, and $\mu:\X\to[0,1]$ is a countably additive probability measure.  A \emph{concrete measure-preserving map} from a concrete probability space $(X,\X,\mu)$ to another $(Y,\Y,\nu)$ is a measurable function $f:X\to Y$ such that $f_\#\mu=\nu$ where
\begin{equation*}
f_\#\mu(E):= \mu(f^{-1}(E))=\nu(E)
\end{equation*}
    for all $E\in \Y$. We denote by $\ConcProb$ the category of concrete probability spaces.

    For a concrete probability space $(X,\X,\mu)$, we denote by $\Aut(X,\X,\mu)$ its automorphism group, i.e. the group of all bi-measurable functions $f:X \to X$ such that both $f$ and its inverse $f^{-1}$ are measure-preserving.
    A \emph{concrete measure-preserving dynamical system} is a tuple $(X,\X,\mu,T)$ where $T$  is a group homomorphism $\Gamma\to \Aut(X,\X,\mu)$, $\gamma\mapsto T^\gamma$.  We call $T$ a \emph{concrete action}. Given a second concrete measure-preserving dynamical system $(Y,\Y,\nu,S)$, a concrete measure-preserving function $\pi:X\to Y$ is called a \emph{concrete factor map} if $S^\gamma\circ \pi(x)=\pi\circ T^\gamma(x)$ for all $x\in X$ and every $\gamma\in \Gamma$. In this case we call $(X,\X,\mu,T)$ a \emph{concrete extension} of $(Y,\Y,\nu,S)$, and $(Y,\Y,\nu,S)$ a \emph{concrete factor} of $(X,\X,\mu,S)$.
    We denote by $\ConcProb_\Gamma$ the category of concrete measure-preserving dynamical systems.
    \item[(ii)] (The category $\CHProb_\Gamma$ of topological measure-preserving dynamical systems) A \emph{compact Hausdorff probability space} is a tuple $(X,\Baire(X),\mu)$, where $X$ is a compact Hausdorff space with Baire $\sigma$-algebra $\Baire(X)$ and $\mu:\X\to[0,1]$ is a Baire-Radon probability measure (a probability measure satisfying the regularity property $\mu(E)=\sup\{\mu(F): F\in \X, F\subset E, F\text{ compact }G_\delta\}$ for all $E\in \Baire(X)$).  A morphism in the category of compact Hausdorff probability spaces is a measure-preserving continuous function.
    We name this category $\CHProb$.
    Similarly to $\ConcProb_\Gamma$, we define the dynamical category $\CHProb_\Gamma$ where now the automorphism group $\Aut(X,\Baire(X),\mu)$ consists of measure-preserving homeomorphisms of compact Hausdorff probability spaces.
    \item[(iii)] (The category $\OpProbAlgG$ of probability algebra dynamical systems)
   Let's start by defining the category $\ProbAlg$ first.
   A \emph{probability algebra} is a tuple $(X,\mu)$ where $X$ is a $\sigma$-complete Boolean algebra (see Appendix \ref{sec-BA} for a basic introduction to Boolean algebras) and $\mu: X\to [0,1]$ is a countably additive probability measure, that is, $\mu(\bigvee_{i=1}^\infty E_i)=\sum_{i=1}^\infty \mu(E_i)$ for every countable family $(E_i)$ of pairwise disjoint elements in $X$, $\mu(1)=1$ and $\mu(E)=0$ if and only if $E=0$. We define a  \emph{probability algebra morphism} from a probability algebra $(X,\mu)$ to another $(Y,\nu)$ to be a Boolean homomorphism\footnote{We implicitly use the dual category here to keep the canonical model functor covariant.} $f:Y \to X$ such that $\mu(f(E))=\nu(E)$ for all $E\in Y$. Notice that we do not stipulate that $f$ is a Boolean $\sigma$-homomorphism since this follows automatically: If $(E_n) $ is a countable family of elements of $Y$ with union $E=\bigvee  E_n$, then $\nu(E\backslash \bigvee_{n= 1}^N E_n)=\mu(f(E)\backslash \bigvee_{n= 1}^N f( E_n))\to 0$ which implies $f(E)=\bigvee  f(E_n)$ because $\mu(E)=0$ if and only if $E=0$.
   Probability algebras and probability algebra morphisms form the category $\ProbAlg$ of probability algebras.

    The automorphism group in $\ProbAlg$ of a probability algebra is the group $\Aut(X,\mu)$ consisting of all measure-preserving (opposite) Boolean isomorphisms of $X$ to itself. A \emph{probability algebra dynamical system} is a tuple $(X,\mu,T)$ where $T:\Gamma \to \Aut(X,\mu)$ is a group homomorphism $\gamma\mapsto T^\gamma$.  We call $T$ an \emph{abstract action}. A morphism from a probability algebra dynamical system $(X,\mu,T)$ to another $(Y,\nu,S)$ is a $\ProbAlg$-morphism $\pi:X\to Y$ such that $T^\gamma\circ \pi(E)=\pi\circ S^\gamma(E)$ for all $E\in Y$ and  $\gamma\in \Gamma$.  We call $\pi$ also an \emph{abstract extension map}, $Y$ an \emph{abstract factor} of $X$, and $X$ an \emph{abstract extension} of $Y$.
    \end{itemize}
    \end{definition}

    We describe next the two important processes of how to canonically associate a probability algebra system to any concrete measure-preserving system, and conversely how to canonically associate to any probability algebra system a topological measure-preserving system.
    Throughout we fix a discrete group $\Gamma$.

Let $(X,\X,\mu,T)$ be a $\ConcProb_\Gamma$-system and let $\mathcal{N}_\mu=\{E\in \X: \mu(E)=0\}$ denote the ideal of null sets of $(X,\X,\mu)$. Then the quotient Boolean algebra $X_\mu:=\X/_{\mathcal{N}_\mu}$, resulting from identifying $E,F\in\mathcal{X}$ whenever $\mu(E\Delta F)=0$, is $\sigma$-complete and we have a canonical Boolean $\sigma$-epimorphism $\pi: \X\to X_\mu$ which associates to each $E\in \X$ its equivalence class $[E]$ in $X_\mu$.
We define the associated probability algebra measure $\bar\mu: X_\mu \to [0,1]$ by $\bar\mu([E]):=\mu(E)$.
For any $\gamma\in\Gamma$, we define $\bar{T}^\gamma: X_\mu\to X_\mu$ by $\bar{T}^\gamma([E]):= \pi((T^\gamma)^{-1}(E))$. We obtain a probability algebra action $\bar{T}:\Gamma\to \Aut(X_\mu,\bar{\mu})$ such that $(X_\mu,\bar{\mu},\bar{T})$ is a probability algebra dynamical system. Passing to the dual category provides us with a canonical choice of a $\OpProbAlgG$-system associated to $(X,\X,\mu,T)$. Of course, the same construction works for any $\CHProb_\Gamma$-system as well.
This combined abstraction and deletion process, i.e. when we delete the null sets and with it the point-set structure of the measurable space $(X,\X)$, is functorial. In particular, any $\ConcProb_\Gamma$-factor map is associated to a $\OpProbAlgG$-factor map.  However this functor is not injective on objects; for example the associated probability algebra cannot distinguish between a concrete probability space and its measure-theoretic completion.

The canonical model functor reverses this process by associating to any $\OpProbAlgG$-dynamical system a canonical $\CHProb_\Gamma$-dynamical system.
We sketch one of the two constructions of the canonical model functor given in \cite[Sections 7, 9]{jt-foundational} which is based on the Stone representation theorem (the latter theorem is recalled in Appendix \ref{sec-BA}).
Let $(X,\mu,T)$ be a $\OpProbAlgG$-dynamical system.  Let $\Stone(X)$ denote the Stone space of the Boolean algebra $X$ and equip it with the Baire $\sigma$-algebra $\Baire(\Stone(X))$.
We define the measure of $E\in \Baire(\Stone(X))$ to be the measure of the unique element of $X$ that generates a clopen subset of $\Stone(X)$ that differs from~$E$ by a Baire-meager set. It can be checked that this measure is a Baire-Radon probability measure and  we denote it by $\mu_{\Stone(X)}$.
For $\gamma\in \Gamma$, we define  ${T^\gamma_{\Stone(X)}:=\Stone(T^\gamma_X):\Stone(X)\to \Stone(X)}$ to be the unique homeomorphism obtained by applying the Stone functor to the opposite Boolean isomorphism ${T^\gamma:X \to X}$. In particular, the inverse image of Baire-meager sets under $T^\gamma_{\Stone(X)}$ are Baire meager and $T^\gamma_{\Stone(X)}$ preserves the Baire-Radon probability measure  $\mu_{\Stone(X)}$.  Hence, $(\Stone(X),\Baire(\Stone(X)),\mu_{\Stone(X)}, ,T_{\Stone(X)})$ is a $\CHProb_\Gamma$-system that we called the \emph{canonical model} of $(X,\mu, T)$.
This correspondence is again functorial, in particular $\OpProbAlgG$-factor maps are mapped to $\CHProb_\Gamma$-factor maps.
In fact, we have a stronger functorial property: If we compose the canonical model functor with the combined abstraction and deletion functor we obtain the identity functor on $\OpProbAlgG$ (up to natural isomorphisms).

We can use the canonical model functor to introduce $L^p$ spaces and integration on $\OpProbAlg$-spaces $(X,\mu)$ by just defining
\begin{equation*}
L^p(X):=L^p(\Stone(X))
\end{equation*}
for $1 \leq p\leq \infty$, and defining the integral of $f\in L^1(X)$ to be $\int_{\Stone(X)} f d\mu_{\Stone(X)}$. One can also define abstract $L^p$ spaces on probability algebras (or more generally, on measure algebras) directly without invoking a canonical model, see \cite{fremlinvol3}. One can then show that these abstract $L^p$ spaces are isomorphic (as Banach and Riesz spaces) to the ones defined above (see \cite[Remark 9.13]{jt-foundational} for comparison). Given a $\OpProbAlg$-dynamical system $(X,\mu,T)$,  define the Koopman operator $T^\gamma:L^p(X)\to L^p(X)$ by
\begin{equation*}
T^\gamma(f):=f\circ T^{\gamma^{-1}}_{\Stone(X)}, \quad f\in L^p(X)
\end{equation*}
for all $\gamma\in \Gamma$.

Given a $\ConcProb$-space $(X,\X,\mu)$, we also have the identifications
\begin{equation*}
L^p(X)\equiv L^p(X_\mu)= L^p(\Stone(X_\mu))
\end{equation*}
as Riesz and Banach spaces.
We will freely make use of these identifications in the sequel.

If $\pi:(X,\mu,T)\to (Y,\nu,S)$ is a $\OpProbAlgG$-factor map, then we have the pullback map $\pi^*:L^p(Y)\to L^p(X)$ defined by $\pi^*(f):= f\circ \Stone(\pi)$ which is easily seen to be an isometry. Thus we can identify $L^p(Y)$ with the closed invariant subspace $\pi^*(L^p(Y))$ in $L^p(X)$. In the case of $p=2$, the pullback map $\pi^*$ induces a conditional expectation operator $\E(\cdot\,|\, Y):L^2(X)\to L^2(Y)$ by defining
\begin{equation*}
\E(f \,|\, Y):=\E( f\, |\, \pi^*(L^2(Y)))
\end{equation*}
where $\E( f\, |\, \pi^*(L^2(Y)))$ is the orthogonal projection onto $\pi^*(L^2(\Stone(Y)))$ seen as a closed subspace of  $L^2(\Stone(X))$. Furthermore, we have $\E( T^\gamma(f)\, |\, Y)= S^\gamma(\E( f\,|\, Y))$ for all $f\in L^2(\Stone(X))$.

As a first example of a $\OpProbAlgG$-factor we have the $\OpProbAlgG$-invariant factor.
Let $(X,\mu,T)$ be a $\OpProbAlgG$-dynamical system.
Then we define the $\OpProbAlgG$-invariant factor to consist of the $\sigma$-complete Boolean algebra
\begin{equation*}
\Inv(X,\mu,T):=\{E\in X :  T^\gamma(E)= E \forall \gamma\in \Gamma\}
\end{equation*}
equipped with the probability measure $\mu$ and the restriction of the action $T$ to $\Inv(X,\mu,T)$ which is just the trivial action.
We call $(\Inv(X,\mu,T),\mu, T)$ the $\OpProbAlgG$-\emph{invariant factor} of $(X,\mu,T)$, where the factor map $\pi:X\to \Inv(X,\mu,T)$ is the canonical projection.
The invariant factor is a functor from the category $\OpProbAlgG$ to itself.  Namely,  if $\pi:(X,\mu,T)\to (Y,\nu,S)$ is a $\OpProbAlgG$-extension, then this yields an induced $\OpProbAlgG$-factor map
\begin{align*}
\pi:(\Inv(X,\mu,T),\mu,T)\to (\Inv(Y,\nu,S),\nu,S).
\end{align*}
We can combine the invariant factor functor with the canonical model functor to find a canonical representation of the invariant factor and the canonical projection in $\CHProb_\Gamma$.
A $\OpProbAlgG$-system $(X,\mu,T)$ is said to be \emph{ergodic} if $\Inv(X,\mu,T)$  is the trivial algebra $\{0,1\}$.

The following result is established in \cite[Theorem 1.6]{jt-foundational}, see also \cite[\S 2]{ellis} and the references in \cite{jt-foundational} for related results in the literature.
\begin{theorem}[Canonical disintegration]
Let $\Gamma$ be a discrete group.
Let $(X,\mu,T)$ and $(Y,\nu,S)$ be $\OpProbAlgG$-dynamical systems, and let $\pi \colon X \to Y$ be a $\OpProbAlgG$-factor map.
Then there is a unique Radon probability measure $\mu_y$ on $\Stone(X)$ for each $y \in \Stone(Y)$ which depends continuously on $y$ in the vague topology in the sense that $y \mapsto \int_{\Stone(X)} f\ d\mu_y$ is continuous for every $f$ in the space of continuous functions $C(\Stone(X))$, and such that
\begin{equation}\label{disint-form} \begin{split}
&\int_{\Stone(X)} f(x) g(\Stone(\pi)(x))\ d\mu_{\Stone(X)}(x)\\[2mm]
 &  = \int_{\Stone(Y)} \left(\int_{\Stone(X)} f\ d\mu_y\right) g\ d\mu_{\Stone(Y)}\end{split}
\end{equation}
for all $f \in C(\Stone(X))$, $g \in C(\Stone(Y))$.  Furthermore, for each $y \in \Stone(Y)$, $\mu_y$ is supported on the compact set $\Stone(\pi)^{-1}(\{y\})$, in the sense that $\mu_Y(E)=0$ whenever $E$ is a measurable set disjoint from $\Stone(\pi)^{-1}(\{y\})$. (Note that this conclusion does \emph{not} require the fibers $\Stone(\pi)^{-1}(\{y\})$ to be measurable.)
Moreover, we have $\mu_{S^\gamma_{\Stone(Y)}(y)}=(T^\gamma_{\Stone(X)})_\#\mu_y$ for all $y\in \Stone(Y)$ and $\gamma\in\Gamma$.
\end{theorem}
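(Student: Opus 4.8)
The plan is to obtain the disintegration measures as the fibrewise evaluations of the conditional expectation operator on the canonical models. I would work entirely on $\Stone(X)$ and $\Stone(Y)$ and use the identifications $L^\infty(X)=C(\Stone(X))$ and $L^\infty(Y)=C(\Stone(Y))$: these hold because the Stone space of a probability algebra is hyperstonean, so each bounded Baire-measurable function coincides almost everywhere with a unique continuous function, and the representing Baire--Radon measure has full support (a nonzero clopen set corresponds to a nonzero algebra element, hence has positive measure). Since the Boolean homomorphism underlying a probability algebra factor map is measure-preserving and therefore injective, $\Stone(\pi)\colon\Stone(X)\to\Stone(Y)$ is a continuous surjection and $\pi^*g=g\circ\Stone(\pi)$ embeds $C(\Stone(Y))$ isometrically and unitally into $C(\Stone(X))$.

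First I would note that the conditional expectation $\E(\,\cdot\mid Y)$ restricts to a positive, unital, $C(\Stone(Y))$-linear projection from $C(\Stone(X))$ onto $C(\Stone(Y))$; positivity, the contraction property on $L^\infty$ (which keeps the image inside $C(\Stone(Y))$), and the module identity $\E(f\cdot\pi^*g\mid Y)=g\cdot\E(f\mid Y)$ are the standard properties of conditional expectation. For each $y\in\Stone(Y)$ I would then define the linear functional $\Lambda_y(f):=\E(f\mid Y)(y)$ on $C(\Stone(X))$; it is positive with $\Lambda_y(1)=1$, so the Riesz representation theorem produces a unique Radon probability measure $\mu_y$ on $\Stone(X)$ with $\int f\,d\mu_y=\Lambda_y(f)$. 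Vague continuity of $y\mapsto\mu_y$ is then automatic, because $y\mapsto\int f\,d\mu_y=\E(f\mid Y)(y)$ is the continuous function $\E(f\mid Y)\in C(\Stone(Y))$. The disintegration formula \eqref{disint-form} follows at once from the module identity together with $\int_{\Stone(X)}\E(h\mid Y)\,d\mu_{\Stone(X)}=\int_{\Stone(X)}h\,d\mu_{\Stone(X)}$, applied to $h=f\cdot\pi^*g$. Uniqueness of the family $(\mu_y)_y$ subject to \eqref{disint-form} and vague continuity falls out of the same formula: \eqref{disint-form} for all $g\in C(\Stone(Y))$ pins down $\int f\,d\mu_y$ for $\mu_{\Stone(Y)}$-almost every $y$, hence for all $y$ by continuity and full support, and this determines $\mu_y$ via Riesz. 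Equivariance, $\mu_{S^\gamma_{\Stone(Y)}(y)}=(T^\gamma_{\Stone(X)})_\#\mu_y$, I would verify by testing both sides against $f\in C(\Stone(X))$ and combining the change-of-variables formula with the already recorded intertwining relation $\E(T^\gamma f\mid Y)=S^\gamma\E(f\mid Y)$.

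The step I expect to be the real obstacle is showing that $\mu_y$ is supported on the fibre $\Stone(\pi)^{-1}(\{y\})$, because this fibre need not be measurable, so one cannot just invoke a pointwise disintegration argument. The way around this is to use the zero-dimensionality of the Stone space: for a clopen set $U\subseteq\Stone(Y)$ with $y\notin U$ one computes directly $\mu_y(\Stone(\pi)^{-1}(U))=\E(\pi^*1_U\mid Y)(y)=1_U(y)=0$, and since $\{y\}$ is the intersection of its clopen neighbourhoods the complement $\Stone(X)\setminus\Stone(\pi)^{-1}(\{y\})$ is a directed union of such clopen $\mu_y$-null sets; inner regularity of the Radon measure $\mu_y$ (every compact subset of a directed union of open sets already sits in one member) then forces this open set, and hence every measurable set disjoint from the fibre, to be $\mu_y$-null. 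A lesser but genuine point, which is really part of the canonical-model formalism rather than this proof, is the passage from the $L^\infty$-class $\E(f\mid Y)$ to an honest continuous function on $\Stone(Y)$; this is where the hyperstonean structure of $\Stone(Y)$ is used, and it underlies the whole argument.
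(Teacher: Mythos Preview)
Your proof is correct. Note, however, that the paper does not actually prove this theorem: it is quoted from \cite[Theorem 1.6]{jt-foundational} (with a pointer also to \cite{ellis}), so there is no in-paper argument to compare against. That said, your route---identifying $L^\infty(X)$ with $C(\Stone(X))$ via the hyperstonean structure, realising conditional expectation as a positive unital $C(\Stone(Y))$-module map into $C(\Stone(Y))$, and then reading off $\mu_y$ from the Riesz representation of $f\mapsto \E(f\mid Y)(y)$---is exactly the natural one and is essentially how the result is obtained in the cited reference. Your handling of the fibre-support claim (the genuinely delicate point, since fibres need not be Baire) via clopen exhaustion of $\Stone(Y)\setminus\{y\}$ together with inner regularity of the Radon measure $\mu_y$ is clean and correct; the only small addendum worth making explicit is that a probability algebra is automatically \emph{complete} (not merely $\sigma$-complete), which is what guarantees that $\Stone(X)$ is Stonean and hence that the identification $L^\infty=C(\Stone)$ goes through.
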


We can use the canonical disintegration to define relatively independent products.
Let $(X,\mu,T)$ and $(Y,\nu,S)$ be $\OpProbAlgG$-dynamical systems and $\pi \colon X \to Y$ a $\OpProbAlgG$-factor map. Then we can define the $\CHProb_\Gamma$-dynamical system  \begin{equation*} \begin{split}&(\Stone(X)\times\Stone(X),\Baire(\Stone(X)\times\Stone(X)),\\ &  \quad \quad  \quad \quad \quad \quad\quad \quad \quad \ \  \mu_{\Stone(X)}\times_{\Stone(Y)}\mu_{\Stone(X)}, T_{\Stone(X)}\times T_{\Stone(X)})
\end{split}
\end{equation*}
 as follows.
Recall that $\Baire(\Stone(X)\times\Stone(X))=\Baire(\Stone(X))\times \Baire(\Stone(X)),$ see e.g.~\cite[Lemma 2.1]{jt19}. This is the main reason for adopting the ``Baire-centric''  perspective, see \cite{jt-foundational} for further illustration of this perspective.

We define the relatively independent product measure
\begin{equation*}
\mu_{\Stone(X)}\times_{\Stone(Y)}\mu_{\Stone(X)}(E)=\int_{\Stone(Y)}\mu_{y}\times\mu_{y}(E)d\nu_{\Stone(Y)}\end{equation*}
for $E\in \Baire(\Stone(X)\times\Stone(X))$.

Finally, we define the product action $T_{\Stone(X)}\times T_{\Stone(X)}$ by
\begin{align*}
    &\Gamma\to \Aut(\Stone(X)\times\Stone(X),\Baire(\Stone(X)\times\Stone(X)),\mu_{\Stone(X)}\times_{\Stone(Y)}\mu_{\Stone(X)}) \\[2mm]
    &\gamma\mapsto (T_{\Stone(X)}\times T_{\Stone(X)})^\gamma(x,y):=T^\gamma_{\Stone(X)}(x)\times T^\gamma_{\Stone(X)}(y).
\end{align*}
This canonical relatively independent product has exactly the same properties as its classical counterpart for standard Borel spaces and countable group actions as for example recorded in \cite[Section 5.5]{furstenberg2014recurrence}, see \cite[\S 8]{jt-foundational}. In particular, we have
\begin{equation}\label{eq-indepprod}\begin{split}
   & \int_{\Stone(Y)}\E(f|Y)\E(g|Y)d\mu_{\Stone(Y)}\\[2mm]
    & =\int_{\Stone(X)\times\Stone(X)} f\times g \ d\mu_{\Stone(X)}\times_{\Stone(Y)}\mu_{\Stone(X)}\end{split}
\end{equation}

\section{An ergodic Roth theorem for uncountable amenable groups, and an application}\label{sec-convergence}

In this section we prove Theorem \ref{thm-uncountableroth} and derive from it a combinatorial application.
In Section \ref{subsec1}, we describe the conditional Kronecker factors which control the convergence of the non-conventional ergodic averages as occurring in Theorem \ref{thm-uncountableroth}.
We verify in Lemma \ref{lem-characteristic} that these factors are characteristic.
We provide the necessary versions of the mean ergodic theorem (Theorem \ref{thm-neumann}) and the van der Corput lemma (Lemma \ref{lem-corput}) for uncountable group actions needed to prove Lemma \ref{lem-characteristic}.
We then prove Theorem \ref{thm-uncountableroth} along the lines of the proof in \cite{bergelson1997roth}.
Finally, we apply Theorem \ref{thm-uncountableroth} in Section \ref{subsec2} to find triangular configurations in dense subsets of arbitrary amenable groups. To this end, we prove a version of the correspondence principle of Furstenberg for arbitrary amenable groups.

\subsection{Characteristic factors and non-conventional averages}\label{subsec1}
The Kronecker factor is characteristic for the non-conventional ergodic averages in the ergodic Roth theorem for ergodic $\mathbb{Z}$-actions, see \cite[\S 3]{furstenberg1977ergodic}.
When considering two commuting actions $S,T$ of an amenable group, Bergelson, McCutcheon and Zhang \cite[\S 4]{bergelson1997roth} identified the factors controlling the convergence of the non-conventional ergodic averages occuring in the ergodic Roth theorem as compact extensions of the corresponding $T$ and $ST$-systems over their common $S$-invariant factor (see below for details).
A careful Furstenberg-Zimmer structural analysis of compact extensions of two systems with respect to a common factor is carried out in \cite[\S 2,3]{bergelson1997roth}. This is applied in {\cite[\S 4,5]{bergelson1997roth}} to establish the existence and positivity of the limit in the amenable ergodic Roth theorem for countable groups.
Since we can use \cite[Theorem 1.1]{zorin2016norm} of Zorin-Kranich to get the existence of the limit in our ergodic Roth theorem for uncountable amenable groups (Theorem \ref{thm-uncountableroth}), we can largely avoid adapting the arguments in \cite[\S 2,3]{bergelson1997roth} to an uncountable/inseparable setting and focus on the necessary modifications needed to establish positivity. We remark that the Furstenberg-Zimmer type structural analysis in \cite[\S 2-4]{bergelson1997roth} can be fully adapted, following the uncountable Furstenberg-Zimmer structure theory developed by the fourth author in \cite{jamneshan2019fz}, to obtain also a new ergodic theoretic proof of Zorin-Kranich's \cite[Theorem 1.1]{zorin2016norm} in the case of two commuting actions of an arbitrary discrete amenable group on an arbitrary space.

For the remainder of this section fix an arbitrary discrete amenable group $\Gamma$.
A F\o lner net for $\Gamma$ is denoted by $(\Phi_\alpha)_{\alpha\in A}$ and recall our standing assumption that all F\o lner nets are understood to be left F\o lner nets.
We start by collecting two well-known results.

\begin{theorem}[Mean ergodic theorem]\label{thm-neumann}
Let $(X,\mu,T)$ be a $\OpProbAlgG$-system and $f\in L^2(X)$. Then we have
\begin{equation*}
\lim_{\alpha\in A} \frac{1}{|\Phi_\alpha|}\sum_{\gamma\in \Phi_\alpha} T^\gamma_{\Stone(X)}(f)=\E(f|\Inv(X,\mu,T))
\end{equation*}
in $L^2(X)$.
\end{theorem}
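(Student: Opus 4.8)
The plan is to reduce the statement to the classical mean ergodic theorem (von Neumann's) applied in a suitable concrete or topological model. The key conceptual point is that although $\Gamma$ is uncountable and $(X,\mu)$ need not be separable, the mean ergodic theorem is a purely Hilbert-space statement once we know the averaging operators $A_\alpha := \frac{1}{|\Phi_\alpha|}\sum_{\gamma\in\Phi_\alpha} T^\gamma$ converge strongly to the orthogonal projection onto the invariant vectors. So the first step is to pass to the canonical model: write $H = L^2(X) = L^2(\Stone(X))$ and note that $\gamma\mapsto T^\gamma$ defines a unitary (Koopman) representation of $\Gamma$ on $H$ via the measure-preserving homeomorphisms $T^\gamma_{\Stone(X)}$. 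Thus $(A_\alpha)$ is a net of contractions on $H$.

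Next I would invoke the abstract amenable mean ergodic theorem for unitary representations: for a F\o lner net $(\Phi_\alpha)$ of an amenable group $\Gamma$ and any unitary representation on a Hilbert space $H$, the averages $A_\alpha$ converge strongly to the orthogonal projection $P$ onto the subspace $H^\Gamma$ of $\Gamma$-fixed vectors. The standard argument: decompose $H = H^\Gamma \oplus \overline{\mathrm{span}}\{f - T^\gamma f : f\in H,\ \gamma\in\Gamma\}$ (the orthogonal complement of the fixed vectors is exactly the closure of the span of coboundaries, by the usual computation that $v\perp H^\Gamma$ iff $v$ lies in that closure, using unitarity). On $H^\Gamma$ each $A_\alpha$ is the identity, hence converges to $P = \mathrm{id}$ there. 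On a coboundary $f - T^\gamma f$ one computes $\|A_\alpha(f - T^\gamma f)\| = \frac{1}{|\Phi_\alpha|}\|\sum_{\delta\in\Phi_\alpha}(T^\delta f - T^{\delta\gamma}f)\| \leq \frac{|\Phi_\alpha \Delta \Phi_\alpha\gamma|}{|\Phi_\alpha|}\|f\| \to 0$ by the (right) F\o lner property — and since the convention in this paper is that all F\o lner nets are left F\o lner nets, I would note that one can equally use the left F\o lner property by rewriting $A_\alpha(f - T^\gamma f) = A_\alpha f - A_\alpha T^\gamma f$ and estimating $\|A_\alpha f - T^{\gamma^{-1}}$-shifted average$\| \leq \frac{|\Phi_\alpha \Delta \gamma\Phi_\alpha|}{|\Phi_\alpha|}\|f\| \to 0$ after the appropriate index substitution (here one uses that the Koopman action is antihomomorphic, so shifting the F\o lner set by $\gamma$ on the left corresponds to composing with $T^\gamma$). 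By uniform boundedness of $\|A_\alpha\|\le 1$, strong convergence on the dense subspace $H^\Gamma \oplus \{\text{coboundaries}\}$ extends to all of $H$.

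Finally I would identify the limit: the fixed-point subspace $H^\Gamma = \{f\in L^2(X): T^\gamma f = f \ \forall\gamma\}$ coincides, via the standard correspondence between $\Gamma$-invariant $L^2$ functions and the invariant sub-$\sigma$-algebra, with $\pi^*(L^2(\Inv(X,\mu,T)))$ where $\pi\colon X\to \Inv(X,\mu,T)$ is the canonical projection onto the $\OpProbAlgG$-invariant factor defined earlier in this section; hence $P = \E(\cdot\,|\,\Inv(X,\mu,T))$ by the very definition of the conditional expectation operator given in the excerpt. This yields the claimed identity. The main obstacle — and really the only point requiring care — is the bookkeeping around the antihomomorphic Koopman convention and the left-versus-right F\o lner estimate: one must make sure the coboundary estimate uses the correct one-sided F\o lner condition and that the substitution in the index of summation is valid for nets (not just sequences), which it is since the estimate is a pointwise bound for each fixed $\alpha$. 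Everything else is a verbatim transcription of the classical proof, valid word-for-word in the Hilbert space $L^2(\Stone(X))$ with no separability or countability needed.
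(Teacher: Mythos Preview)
Your proposal is correct. The paper does not give a proof at all: it simply cites \cite[Theorem 5.7]{paterson}, which is precisely the abstract amenable mean ergodic theorem for unitary representations that you sketch. Your argument---the decomposition $H = H^\Gamma \oplus \overline{\mathrm{span}}\{f - T^\gamma f\}$, triviality on fixed vectors, and the F\o lner estimate on coboundaries---is the standard proof of that cited result, so there is no meaningful difference in approach.

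One small remark on the bookkeeping you flag: with the paper's antihomomorphic convention $T^{\gamma_1\gamma_2} = T^{\gamma_2}\circ T^{\gamma_1}$, one has $T^\delta(T^\gamma f) = T^{\gamma\delta}f$, so $\sum_{\delta\in\Phi_\alpha} T^\delta T^\gamma f = \sum_{\delta'\in\gamma\Phi_\alpha} T^{\delta'} f$ and the relevant bound is $|\Phi_\alpha\Delta\gamma\Phi_\alpha|/|\Phi_\alpha|$, i.e.\ exactly the left F\o lner condition assumed throughout. Your first displayed estimate (with $\Phi_\alpha\gamma$) is the wrong side, but your subsequent correction lands on the right one; you might streamline this in a final write-up.
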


\begin{proof}
E.g., see \cite[Theorem 5.7]{paterson}.
\end{proof}

For the sake of completeness, we give a proof of van der Corput's lemma for uncountable amenable groups  by adapting the arguments in \cite{bergelson1997roth}.

\begin{lemma}\label{lem-corput}
Let $\{f_{\gamma}:\gamma\in \Gamma\}$ be a subset of a (not necessarily separable) Hilbert space $\mathcal{H}$ such that $\sup_{\gamma\in \Gamma} \|f_\gamma\|_{\mathcal{H}}<\infty$.
If
\begin{equation}\label{eq:corput}
\lim_{\alpha \in A} \frac{1}{|\Phi_\alpha|^2} \bigg(\limsup_{\beta\in A} \frac{1}{|\Phi_\beta|} \sum_{\gamma\in \Phi_\beta}\sum_{\eta,\rho\in \Phi_\alpha} \langle f_{\eta \gamma}, f_{\rho\gamma}\rangle_{\mathcal{H}}\bigg)=0,
\end{equation}
then
\begin{equation*}
\lim_{\alpha\in A} \bigg\|\frac{1}{|\Phi_\alpha|}\sum_{\gamma\in \Phi_\alpha} f_\gamma\bigg\|_{\mathcal{H}}=0.
\end{equation*}
\end{lemma}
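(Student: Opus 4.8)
The plan is to prove the van der Corput lemma in the ``Folner net'' form by reducing it to the following quantitative statement: if $(a_\gamma)_{\gamma\in\Gamma}$ is a bounded family in $\mathcal H$ and $\alpha\in A$ is fixed, then
\begin{equation*}
\Bigl\| \frac{1}{|\Phi_\beta|}\sum_{\gamma\in\Phi_\beta} a_\gamma \Bigr\|_{\mathcal H}^2
\le \frac{1}{|\Phi_\beta|}\sum_{\gamma\in\Phi_\beta}
\Bigl\| \frac{1}{|\Phi_\alpha|}\sum_{\eta\in\Phi_\alpha} a_{\eta\gamma}\Bigr\|_{\mathcal H}^2 + o_\beta(1),
\end{equation*}
where the error term $o_\beta(1)$ depends on $\alpha$ and on the uniform bound $M=\sup_\gamma\|a_\gamma\|$, and tends to $0$ along the net $(\Phi_\beta)$ because $(\Phi_\beta)$ is (left) Folner. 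The point is the standard ``averaging over a Folner window'' trick: for each fixed $\eta\in\Phi_\alpha$, the two finite sums $\sum_{\gamma\in\Phi_\beta} a_{\eta\gamma}$ and $\sum_{\gamma\in\Phi_\beta} a_\gamma$ differ only in at most $|\eta\Phi_\beta\,\Delta\,\Phi_\beta|$ terms, each of norm $\le M$, so after averaging over $\eta\in\Phi_\alpha$ and dividing by $|\Phi_\beta|$ the discrepancy is bounded by $M\cdot\frac{1}{|\Phi_\alpha|}\sum_{\eta\in\Phi_\alpha}\frac{|\eta\Phi_\beta\,\Delta\,\Phi_\beta|}{|\Phi_\beta|}\to 0$ along $\beta$. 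Hence $\frac{1}{|\Phi_\beta|}\sum_\gamma a_\gamma$ and $\frac{1}{|\Phi_\beta||\Phi_\alpha|}\sum_{\gamma,\eta} a_{\eta\gamma}$ have the same limiting behavior, and applying the Cauchy--Schwarz (or Jensen) inequality to the inner average over $\eta$ gives the displayed inequality.

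First I would apply this to $a_\gamma=f_\gamma$. Expanding the square norm on the right,
\begin{equation*}
\Bigl\| \frac{1}{|\Phi_\alpha|}\sum_{\eta\in\Phi_\alpha} f_{\eta\gamma}\Bigr\|_{\mathcal H}^2
= \frac{1}{|\Phi_\alpha|^2}\sum_{\eta,\rho\in\Phi_\alpha}\langle f_{\eta\gamma},f_{\rho\gamma}\rangle_{\mathcal H},
\end{equation*}
so averaging over $\gamma\in\Phi_\beta$, taking $\limsup_{\beta}$, and using the inequality above yields
\begin{equation*}
\limsup_{\beta\in A}\Bigl\| \frac{1}{|\Phi_\beta|}\sum_{\gamma\in\Phi_\beta} f_\gamma\Bigr\|_{\mathcal H}^2
\le \frac{1}{|\Phi_\alpha|^2}\limsup_{\beta\in A}\frac{1}{|\Phi_\beta|}\sum_{\gamma\in\Phi_\beta}\sum_{\eta,\rho\in\Phi_\alpha}\langle f_{\eta\gamma},f_{\rho\gamma}\rangle_{\mathcal H}.
\end{equation*}
The left-hand side does not depend on $\alpha$, so I may take the limit over $\alpha\in A$ on the right; by hypothesis \eqref{eq:corput} this limit is $0$. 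Therefore $\limsup_{\beta}\bigl\|\frac{1}{|\Phi_\beta|}\sum_{\gamma\in\Phi_\beta} f_\gamma\bigr\|_{\mathcal H}^2=0$, which is the desired conclusion (after renaming the index $\beta$ back to $\alpha$, or simply noting that the net limit exists and equals $0$).

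The only genuinely delicate point — and where the non-countability matters not at all, since everything is finitary — is making the ``window-shift'' estimate precise with the asymmetry of left translation: one must be careful that the relevant symmetric difference is $|\eta\Phi_\beta\,\Delta\,\Phi_\beta|$ (controlled by the left Folner condition applied to $\eta^{-1}$, equivalently to $\eta$, after reindexing), and that the indexing $\gamma\mapsto\eta\gamma$ is a bijection of $\Gamma$ so no terms are lost or double-counted. I would also record at the outset that boundedness of $\{f_\gamma\}$ is used twice: once to control the window-shift error, and once (implicitly) to ensure all the averages in sight are well-defined elements of $\mathcal H$ with uniformly bounded norm. No completeness or separability of $\mathcal H$ is needed; the argument is entirely an inner-product computation plus the Folner property, exactly as in \cite{bergelson1997roth}, and the passage from sequences to nets is purely cosmetic.
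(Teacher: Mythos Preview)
Your proposal is correct and follows essentially the same approach as the paper: both arguments split $\frac{1}{|\Phi_\beta|}\sum_{\gamma\in\Phi_\beta} f_\gamma$ into its $\Phi_\alpha$-averaged version $\frac{1}{|\Phi_\beta||\Phi_\alpha|}\sum_{\gamma,\eta} f_{\eta\gamma}$ plus a remainder, control the remainder via the left F\o lner condition and the uniform bound on $\|f_\gamma\|$, and then apply Cauchy--Schwarz/Jensen to the outer average over $\gamma$ before expanding the inner square and taking limits. The only (harmless) imprecision is that the convexity step is applied to the \emph{outer} average over $\gamma$, not the inner one over $\eta$; otherwise your argument matches the paper's proof line by line.
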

The $\limsup$ in \eqref{eq:corput} can be defined in a various of equivalent ways. For example, for a net $(x_\alpha)_{\alpha\in A}$ of real numbers, where $(A,\leq)$ is a directed set (that is, a partially ordered set with the property that for every pair $a,b\in A$ there is $c\in A$ such that $a,b\leq c$). We can define $\limsup_{\alpha\in A} x_\alpha:= \inf_{\alpha\in A} \sup_{\beta\geq \alpha} x_\beta$, where $\{\beta\geq \alpha\}$ are the ``tail'' of the net.

\begin{proof}
Let us rewrite
\begin{equation}
\label{eq:1term}\begin{split}
\frac{1}{|\Phi_\beta|} \sum_{\gamma\in \Phi_\beta} f_\gamma =  & \Bigg( \frac{1}{|\Phi_\beta|} \sum_{\gamma\in \Phi_\beta} \frac{1}{|\Phi_\alpha|}\sum_{\omega\in \Phi_\alpha} f_{\omega \gamma} \Bigg)  \\[2mm] & + \Bigg ( \frac{1}{|\Phi_\beta|} \sum_{\gamma\in \Phi_\beta} f_\gamma - \frac{1}{|\Phi_\beta|} \sum_{\gamma\in \Phi_\beta} \frac{1}{|\Phi_\alpha|} \sum_{\omega\in \Phi_\alpha} f_{\omega \gamma}\Bigg ). \end{split}
\end{equation}
Note that for the second term of the right-hand side of \eqref{eq:1term},
\begin{equation}\label{eq:2term}\begin{split}&
    \bigg\|\frac{1}{|\Phi_\beta|} \sum_{\gamma\in \Phi_\beta} f_\gamma - \frac{1}{|\Phi_\beta|} \sum_{\gamma\in \Phi_\beta} \frac{1}{|\Phi_\alpha|} \sum_{\omega\in \Phi_\alpha} f_{\omega \gamma}\bigg\|_{\mathcal{H}}\\[2mm]
     & = \bigg\|\frac{1}{|\Phi_\beta|} (\sum_{\gamma\in \Phi_\beta} f_\gamma - \frac{1}{|\Phi_\alpha|}  \sum_{\omega\in \Phi_\alpha} \sum_{\gamma\in \omega \Phi_\beta} f_{\gamma})\bigg\|_{\mathcal{H}}.\end{split}
\end{equation}
Since $\sup_{\omega\in \Phi_\alpha} \frac{|\Phi_\beta \Delta \omega \Phi_\beta|}{|\Phi_\beta|}\to 0$ by the F\o lner property (see Appendix \ref{sec-amsyn}), it follows that the right-hand side of \eqref{eq:2term}  converges to $0$ (in $\beta$).
Meanwhile, for the first term in the right-hand side of \eqref{eq:1term}, the Cauchy-Schwarz inequality yields
\begin{equation}\label{eq:conclusion}\begin{split}
\bigg\|\frac{1}{|\Phi_\beta|} \sum_{\gamma\in \Phi_\beta} \frac{1}{|\Phi_\alpha|}\sum_{\omega\in \Phi_\alpha} f_{\omega \gamma}\bigg\|_{\mathcal{H}}^2 &\leq \frac{1}{|\Phi_\beta|} \sum_{\gamma\in \Phi_\beta} \bigg\| \frac{1}{|\Phi_\alpha|} \sum_{\omega \in \Phi_\alpha} f_{\omega \gamma}\bigg\|_{\mathcal{H}}^2 \\[2mm] &= \frac{1}{|\Phi_\beta|}\sum_{\gamma\in \Phi_\beta} \frac{1}{|\Phi_\alpha|^2}\sum_{\omega,\rho\in \Phi_\alpha} \langle f_{\omega \gamma}, f_{\rho \gamma} \rangle_{\mathcal{H}}.  \end{split}
\end{equation}
Now taking the norm on both sides of \eqref{eq:1term}, using the triangle inequality to separate the two terms on its right-hand side, choosing then first $\beta$ and second $\alpha$ large enough, the claim follows from the hypothesis \eqref{eq:corput}.
\end{proof}

We introduce the important notion of compact extensions.

\begin{definition}[Compact extension]\label{def-compactext}
An abstract $\OpProbAlgG$-extension $$\pi\colon (X,\mu,T) \to (Y,\nu,S)$$ is called a \emph{$\OpProbAlgG$-compact extension} if
$L^2(X)$ is the closure of the union of all finitely generated, closed, and $T$-invariant $L^\infty(Y)$-submodules of $L^2(X)$.
\end{definition}

The interested reader is referred to \cite[\S 4]{jamneshan2019fz} for different descriptions of abstract compact extension. One of these descriptions in terms of conditionally almost periodic functions is used in the proof of our uncountable amenable ergodic Roth theorem below and is derived in Appendix \ref{sec-canalysis}. For a more extensive treatment we refer to \cite[\S 4]{jamneshan2019fz}.

Given an arbitrary $\OpProbAlgG$-extension $\pi:(X,\mu,T)\to (Y,\nu,S)$, there is a largest compact extension of $Y$ below $X$.
More precisely, let $H$ be the closure of the union of all finitely generated, closed, and $T$-invariant $L^\infty(Y)$-submodules of $L^2(X)$. Since $H$ is closed under complex conjugation and multiplication, we can identify $H$ with a von Neumann subalgebra of $L^\infty(X)$. Now using the well known duality between categories of commutative von Neumann algebras and probability algebras, we can identify with $H$ a $\OpProbAlgG$-factor $(Z,\lambda, R)$ of $(X,\mu,T)$ which is an $\OpProbAlgG$-extension of $(Y,\nu,S)$, see \cite{jt-foundational} and \cite[\S 2]{jamneshan2019fz} for a reference.

For the remainder of this section,  we fix a $\OpProbAlgG$-Roth dynamical system $(X,\mu,S,T)$.
Then $(X, \mu, ST)$ is a $\OpProbAlgG$-dynamical system where the abstract action $ST: \Gamma\to \Aut(X,\mu)$ is defined by $\gamma \mapsto S^\gamma\circ T^\gamma$. We write $S^\gamma T^\gamma=S^\gamma \circ T^\gamma$.
It follows from the commutativity of $T,S$ that $(\Inv(X,\mu,S),\mu,T)$ is a $\OpProbAlgG$-factor of $(X,\mu, T)$ and $(X,\mu, ST)$ respectively.
In order to lighten the notation, we denote by $Y=\Inv(X,\mu,S)$.
Let $H$ be the closure of the union of all finitely generated, closed, and $T$-invariant $L^\infty(Y)$-submodules of $L^2(X)$.
We call the $\OpProbAlgG$-factor $(Z_T,\mu_{Z_T},T_{Z_T})$ of $(X,\mu,T)$ identified with $H$ the \emph{conditional Kronecker factor} of  $(X,\mu,T)$.
Similarly, we define the conditional Kronecker factor of $(Z_{ST},\mu_{Z_{ST}},T_{Z_{ST}})$ of $(X, \mu, ST)$ relative to the invariant factor $Y$.

Going forward, we work with the canonical models of the above systems and their corresponding $\CHProb_\Gamma$-factor relations.
For example, the canonical model of $(X,\mu,S,T)$ is denoted by $(\Stone(X),\Baire(\Stone(X)),\mu_{\Stone(X)},S_{\Stone(X)},T_{\Stone(X)})$ and we have the $\CHProb_\Gamma$-factor map \begin{equation*}\begin{split}
&(\Stone(Z_{ST}),\Baire(\Stone(Z_{ST})),\mu_{\Stone(Z_{ST})},(ST)_{\Stone(Z_{ST})}) \\[2mm] & \to (\Stone(Y),\Baire(Y),\mu_{\Stone(Y)},T_{\Stone(Y)}).
\end{split}
\end{equation*}

\begin{lemma}\label{lem-characteristic}
Suppose that $f,g\in L^\infty(X)$ with $f \perp L^2(Z_T)$ or $g \perp L^2(Z_{ST})$, where we view $L^2(Z_T)$ and  $L^2(Z_{ST})$ as subspaces of $L^2(X)$.
Then
\begin{equation}\label{eq_Thm43}
\lim_{\alpha \in A} \frac{1}{|\Phi_\alpha|} \sum_{\gamma\in \Phi_\alpha} T^\gamma(f) S^\gamma T^\gamma(g)=0
\end{equation}
in $L^2(X)$.
\end{lemma}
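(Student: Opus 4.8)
The plan is to prove the lemma by the standard van der Corput argument, using Lemma~\ref{lem-corput} for the Hilbert space $\mathcal H=L^2(X)$ together with the mean ergodic theorem (Theorem~\ref{thm-neumann}) and the characterizing property of the conditional Kronecker factors via finitely generated $T$-invariant $L^\infty(Y)$-modules. By symmetry of the two hypotheses (one exploits that $(X,\mu,ST)$ plays the role for $g$ that $(X,\mu,T)$ plays for $f$, and $Y=\Inv(X,\mu,S)$ is a common factor of both), it suffices to treat the case $g\perp L^2(Z_{ST})$; the case $f\perp L^2(Z_T)$ is handled by the same computation after relabeling, or by taking adjoints. So assume $g\perp L^2(Z_{ST})$, i.e.\ $g$ is orthogonal to every finitely generated, closed, $T$-invariant $L^\infty(Y)$-submodule of $L^2(X)$ (where here $T$-invariance is with respect to the $ST$-action).

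First I would set $f_\gamma := T^\gamma(f)\,S^\gamma T^\gamma(g)\in L^2(X)$; since $f,g\in L^\infty(X)$ and the Koopman operators are isometries on $L^\infty$, we have $\sup_\gamma\|f_\gamma\|_{L^2(X)}\le \|f\|_\infty\|g\|_\infty<\infty$, so the boundedness hypothesis of Lemma~\ref{lem-corput} holds. It remains to verify the double-average smallness condition \eqref{eq:corput}. Expanding the inner products, and using that $T$ preserves the inner product, one computes
\begin{equation*}
\langle f_{\eta\gamma}, f_{\rho\gamma}\rangle = \int_{\Stone(X)} T^{\eta\gamma}(f)\,\overline{T^{\rho\gamma}(f)}\; S^{\eta\gamma}T^{\eta\gamma}(g)\,\overline{S^{\rho\gamma}T^{\rho\gamma}(g)}\; d\mu_{\Stone(X)}.
\end{equation*}
Applying $T^{-\gamma}$ (an isometry of $L^2$) to absorb the common $\gamma$-shift, this equals $\int T^{\eta}(f)\,\overline{T^{\rho}(f)}\;(ST)^\gamma\big(S^{\eta}T^{\eta}(g)\,\overline{S^{\rho}T^{\rho}(g)}\big)\,d\mu_{\Stone(X)}$ after also using commutativity of $S$ and $T$ to move the $S$-part appropriately; the key point is that the function $T^{\eta}(f)\overline{T^{\rho}(f)}$ is $\eta,\rho$-dependent but $\gamma$-independent, while the remaining factor is being averaged over $\gamma\in\Phi_\beta$ under the $ST$-action. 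Summing over $\gamma\in\Phi_\beta$ and dividing by $|\Phi_\beta|$, Theorem~\ref{thm-neumann} (applied to the $ST$-action) shows that the inner $\limsup_\beta$ equals
\begin{equation*}
\int_{\Stone(X)} T^{\eta}(f)\,\overline{T^{\rho}(f)}\;\E\!\big(S^{\eta}T^{\eta}(g)\,\overline{S^{\rho}T^{\rho}(g)}\,\big|\,\Inv(X,\mu,ST)\big)\,d\mu_{\Stone(X)}.
\end{equation*}
Here one must be a little careful that the $\limsup$ over the net can be replaced by an honest limit along the F\o lner net; this is exactly what Theorem~\ref{thm-neumann} provides, and the boundedness of all functions in $L^\infty$ lets one pass the limit inside the integral.

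The final and most delicate step is to show that after summing this expression over $\eta,\rho\in\Phi_\alpha$, dividing by $|\Phi_\alpha|^2$, and letting $\alpha\to\infty$, the result is $0$; this is where the hypothesis $g\perp L^2(Z_{ST})$ enters. The idea, following \cite[\S5]{bergelson1997roth}, is to recognize the $\Phi_\alpha$-average of the functions $\gamma\mapsto S^{\gamma}T^{\gamma}(g)\overline{(\text{shift})}$ — or more precisely to rewrite the double sum over $\eta,\rho$ so that one sees the $L^\infty(Y)$-valued ``conditional inner products'' $\E(S^\eta T^\eta(g)\,\overline{S^\rho T^\rho(g)}\mid Y)$ — and to argue that the averaged quantity is controlled by the norm of the projection of $g$ onto the closure of finitely generated $T$-invariant $L^\infty(Y)$-modules inside $L^2(X)$, which vanishes by hypothesis. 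Concretely, I expect to invoke the conditional (module) structure: the non-vanishing of the limit would produce, via a Gram--Schmidt/almost-periodicity argument as developed in Appendix~\ref{sec-canalysis}, a nonzero finitely generated $T$-invariant $L^\infty(Y)$-submodule of $L^2(X)$ onto which $g$ has nonzero projection, contradicting $g\perp L^2(Z_{ST})$. I anticipate that this last reduction — carefully replacing the direct-integral/measurable-selection machinery of the countable-separable case by the conditional Heine--Borel and conditional Gram--Schmidt tools — is the main obstacle; everything preceding it (the van der Corput setup, the two applications of the mean ergodic theorem, the bookkeeping with the commuting actions and the common factor $Y$) is routine.
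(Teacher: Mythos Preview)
Your van der Corput setup is correct, but there is a computational error at the key step that derails the rest of the argument. After using $T^\gamma$-invariance of the measure in $\langle f_{\eta\gamma},f_{\rho\gamma}\rangle$, the $g$-part becomes $S^{\eta\gamma}T^\eta(g)\,\overline{S^{\rho\gamma}T^\rho(g)}=S^\gamma\bigl(S^\eta T^\eta(g)\,\overline{S^\rho T^\rho(g)}\bigr)$, \emph{not} $(ST)^\gamma\bigl(S^\eta T^\eta(g)\,\overline{S^\rho T^\rho(g)}\bigr)$: only the $T^\gamma$ is absorbed, while $S^{\eta\gamma}=S^\gamma S^\eta$ survives. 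Hence the inner $\gamma$-average is an $S$-average, and the mean ergodic theorem must be applied to the $S$-action, giving the conditional expectation onto $Y=\Inv(X,\mu,S)$ rather than onto $\Inv(X,\mu,ST)$.

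This is not cosmetic: landing on $Y$ is precisely what lets the paper rewrite the resulting quantity, via \eqref{eq-indepprod}, as an inner product on the relatively independent product $X\times_Y X$, after which the double sum $\tfrac{1}{|\Phi_\alpha|^2}\sum_{\zeta,\theta}a_{\zeta,\theta}$ collapses to $\bigl\|\tfrac{1}{|\Phi_\alpha|}\sum_\gamma (T\times ST)^\gamma(f\times g)\bigr\|^2_{L^2(X\times_Y X)}$. A \emph{second} application of the mean ergodic theorem (for the $T\times ST$-action on $X\times_Y X$) then finishes the proof, using that the orthogonality hypothesis on $f$ (or $g$) forces $f\times g\perp \Inv(T\times ST)$. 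No Gram--Schmidt, conditional Heine--Borel, or almost-periodicity argument is needed in this lemma; those tools appear only later in the proof of Theorem~\ref{thm-uncountableroth}. Your proposed final step, set up over the wrong invariant factor and appealing to module machinery, is both unnecessary and would not go through as written.
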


We adapt the arguments in \cite[Theorem 4.3]{bergelson1997roth}, we include the details for completeness.

\begin{proof}
It is enough to consider the case $f \perp L^2(Z_T)$.
The proof for the case where {$g \perp L^2(Z_{ST})$} is similar.
Then
\begin{align*}
&\frac{1}{|\Phi_\alpha|} \sum_{\gamma\in \Phi_\alpha} \langle u_{\zeta\gamma},u_{\theta\gamma}\rangle_{L^2(X)}\\[2mm]
&=\frac{1}{|\Phi_\alpha|} \sum_{\gamma\in \Phi_\alpha} \int  T^{\zeta\gamma}(f)S^{\zeta\gamma}T^{\zeta\gamma}(g)\overline{ T^{\theta\gamma}(f)S^{\theta\gamma}T^{\theta\gamma}(g)}\mathrm{d}\mu_{\Stone(X)} \\[2mm]
&=\frac{1}{|\Phi_\alpha|} \sum_{\gamma\in \Phi_\alpha} \int T^{\zeta}(f)\overline{T^{\theta}(f)} S^{\zeta\gamma}T^{\zeta}(g)\overline{S^{\theta\gamma}T^{\theta}(g)}\mathrm{d}\mu_{\Stone(X)}\\[2mm]
&=\int T^{\zeta}(f)\overline{T^{\theta}(f)}\left( \frac{1}{|\Phi_\alpha|} \sum_{\gamma\in \Phi_\alpha} S^{\zeta}T^{\zeta}  S^{\gamma}(g)\overline{S^{\theta}T^{\theta}S^\gamma(g)}\right)\mathrm{d}\mu_{\Stone(X)}.
\end{align*}
The second equality is  due to $T^\gamma$ being measure preserving.
 By Theorem \ref{thm-neumann} and orthogonal decomposition, we have
 \begin{align*}
 &\frac{1}{|\Phi_\alpha|} \sum_{\gamma\in \Phi_\alpha} \langle u_{\zeta\gamma},u_{\theta \gamma}\rangle_{L^2(X)}\\[2mm]
&\quad \to  \int T^\zeta(f) T^\theta(\overline{f})\E(S^\zeta T^\zeta(g) S^\theta T^\theta( \overline{g})\mid Y)\mathrm{d} \mu_{\Stone(X)}\\[2mm]
& \quad \quad  \ =\int \E(T^\zeta(f) T^\theta(\overline{f})\mid Y)\E(S^\zeta T^\zeta(g) S^\theta T^\theta(\overline{g})\mid Y)\mathrm{d}\mu_{\Stone(Y)}.
\end{align*}
By \eqref{eq-indepprod} this equals
\begin{equation*} \int T^{\zeta}\times S^\zeta T^\zeta(f\times g)T^{\theta}\times S^{\theta}T^{\theta}(\overline{f\times g})\,d\mu_{\Stone(X)}\times_{\Stone(Y)}\mu_{\Stone(X)}\\
=:a_{\zeta,\theta}.\end{equation*}
Note that by Lemma \ref{lem-corput},
to conclude the proof it suffices to show that
\begin{equation*} \lim_{\alpha \in A}\frac{1}{|\Phi_{\alpha}|^2}\sum_{\zeta,\theta\in \Phi_{\alpha}}a_{\zeta,\theta} =0.\end{equation*}
Observe that the left hand-side of the last display equals
\begin{equation*}
\lim_{\alpha\in A}\|\frac{1}{|\Phi_\alpha|}\sum_{\gamma\in\Phi_\alpha} T^\gamma_{\Stone(X)}\times S^\gamma_{\Stone(X)} T^\gamma_{\Stone(X)}(f\times g)\|_{L^2(X)}^2.
\end{equation*}
This equals zero   by  another application of Theorem \ref{thm-neumann}
since $f\times g$ is orthogonal to the invariant factor of the relatively independent product with respect to the action of $T_{\Stone(X)}\times S_{\Stone(X)}T_{\Stone(X)}$. The proof is complete.
\end{proof}

We now prove our ergodic Roth theorem for uncountable amenable groups.

\begin{proof}[Proof of Theorem~\ref{thm-uncountableroth}] The existence of the limit \eqref{mainlimit} follows from~\cite[Theorem~1.1]{zorin2016norm}.
It remains to  establish positivity of \eqref{mainlimit}. More precisely, for every {$E\in \Baire(\Stone(X))$} with $\mu_{\Stone(X)}(E)>0$ we will show that
\begin{equation}\label{eq:limit}
\lim_{\alpha\in A} \frac{1}{|\Phi_\alpha|} \sum_{\gamma\in \Phi_\alpha} \mu_{\Stone(X)}(E\cap  T^\gamma_{\Stone(X)}(E)\cap S_{\Stone(X)}^\gamma T_{\Stone(X)}^\gamma(E))>0.
\end{equation}
By Lemma \ref{lem-characteristic} and an orthogonal decomposition, it is enough to show that
\begin{equation*}
\lim_{\alpha\in A} \frac{1}{|\Phi_\alpha|} \sum_{\gamma\in \Phi_\alpha} \int_{\Stone(X)} 1_E\E(1_E |Z_T) \E(1_E |Z_{ST}) d\mu_{\Stone(X)} >0.
\end{equation*}
Since $\mu_{\Stone(X)}(E)>0$, it holds that $\E(1_E|Z_T)\E(1_E|Z_{ST})>0$   on $E$.
Thus,
there exist $r>0$ and $E'\in\Baire(\Stone(X))$ with $E'\subset E$ and $\mu_{\Stone(X)}(E')>0$ such that  \begin{equation*} \E(1_E|Z_T)\E(1_E|Z_{ST})>r \quad \text{  on } E'.\end{equation*}  Recall that we denote by $Y$ the invariant factor of $(X,\mu,S)$.
Since \begin{equation*} 0<\mu_{\Stone(X)}(E')=\int_{\Stone(Y)} \E(1_{E'}|Y)d\mu_{\Stone(Y)},\end{equation*}  we find $t>0$ and $F\in \Baire(\Stone(Y))$ with $\mu_{\Stone(Y)}(F)>0$ such that $\E(1_{E'}|Y)>t$ on $F$. In particular, we obtain
\begin{equation*} \E(1_E \E(1_E|Z_T) \E(1_E|Z_{ST})|Y) \geq r \E(1_{E'}|Y) > rt \quad \text{  on } F.
\end{equation*}
Furthermore, by the definition of a compact extension, there exist sequences $(f_n)$ and $(g_n)$ such that each $f_n$  is contained in a $T$-invariant  finitely generated $L^\infty(Y)$-submodule of $L^2(X)$, each $g_n$ is contained in an $(ST)$-invariant finitely generated $L^\infty(Y)$-submodule of $L^2(X)$, and  \begin{equation*} \|\E(1_E|Z_T)-f_n\|_{L^2(X)}\to 0  \quad \textup{and}
\quad \|\E(1_E|Z_{ST})-g_n\|_{L^2(X)}\to 0.\end{equation*}
We can and will assume that $f_n$ and $g_n$ are bounded for all $n$.
Indeed, we can define $f_{n,m}= f_n 1_{\E(|f_n|^2|Y)\leq m}$.
Then $f_{n,m}$ is an element of the same $L^\infty(Y)$-submodule as $f_n$ for each $m$ and every $n$ and
the diagonal sequence $(f_{n,n})$ approximates $\E(1_E|Z_T)$ in $L^2(X)$. Similarly for $(g_n)$.

Using the identity
\begin{equation*} \|u\|_{L^2(X)}^2=\int_{\Stone(Y)} \E(|u|^2|Y)d\mu_{\Stone(Y)}\end{equation*}  and passing to a subsequence if necessary, we have that
\begin{equation*}  \E(|\E(1_E|Z_T)-f_n|^2|Y)\to 0  \quad \textup{and} \quad  \E(|\E(1_E|Z_{ST})-g_n|^2|Y)\to 0 \quad \text{a.s.} \end{equation*}
By Egorov's theorem, we find $F'\in \Baire(\Stone(Y))$ with $F'\subset F$ and $\mu_{\Stone(Y)}(F')>0$ such that $2\mu_{\Stone(Y)}(F')>\mu_{\Stone(Y)}(F)$ and the following two hold \begin{align*}
   \E(|\E(1_E|Z_T)-f_n|^2|Y)\to 0 \quad &\textup{a.s. uniformly on $F'$}  \\[2mm]
    \quad \E(|\E(1_E|Z_{ST})-g_n|^2|Y)\to 0 \quad   &\textup{a.s. uniformly on $F'$}
\end{align*}
 In particular, for any $\ep>0$ we find some $f$ and $g$ in  a $T$- and  $(ST)$-invariant finitely generated $L^\infty(Y)$-submodule of $L^2(X)$, respectively, such that
\begin{equation}
\label{eqnapprox1}\begin{split}
    \E(|\E(1_E|Z_T)-f|^2|Y)<\ep &\quad  \textup{on $F'$} \\[2mm]   \E(|\E(1_E|Z_{ST})-g|^2|Y)<\ep &\quad \textup{on $F'$}.
    \end{split}
\end{equation}
Since both $f,g$ lie in finitely generated, closed, and $T$- and $(ST)$-invariant $L^\infty(Y)$-submodules of $L^2(X)$, respectively, it follows from (ii)' implies (iii)' in \cite[Theorem 4.1]{jamneshan2019fz} that there are $h_1,\ldots,h_l\in L^2(X)$ such that for each $\gamma\in\Gamma$,  the following two hold
\begin{equation}
\label{eqnapprox2} \begin{split}
    \E(|T_{\Stone(X)}^\gamma(f)- h_{N^T_\gamma}|^2|Y)<\ep  &\quad \textup{on $F'$}, \\[2mm] \E(|S_{\Stone(X)}^\gamma T_{\Stone(X)}^\gamma(g)- h_{N^{ST}_\gamma}|^2|Y)<\ep &\quad  \textup{on $F'$}, \end{split}
\end{equation}
where
\begin{equation*} N^T_\gamma=\sum_{m=1}^l m 1_{C_{m,\gamma}},\quad N^{ST}_\gamma=\sum_{m=1}^l m 1_{D_{m,\gamma}},\end{equation*}     and $(C_{m,\gamma})$ and $(D_{m,\gamma})$ are defined as follows.

 Let $
\tilde{C}_{m,\gamma}=\{\E(|T_{\Stone(X)}^\gamma(f)- h_m|^2|Y)<\ep\}$
 for $m=1,\ldots,l$, and set $C_{1,\gamma}=\tilde{C}_{1,\gamma}$ and $C_{m,\gamma}=\tilde{C}_{m,\gamma}\backslash \bigcup_{m'=1}^{m-1} \tilde{C}_{m',\gamma}$ for $m=2,\ldots,l$. Similarly define $D_{m,\gamma}$, $m=1,\ldots,l$ with $T_{\Stone(X)}^\gamma(f)$ replaced by $S_{\Stone(X)}^\gamma T_{\Stone(X)}^\gamma(g)$. A self-contained proof of the above result in \cite{jamneshan2019fz} tailored to our setting is Lemma \ref{lem-heineborel} in Appendix \ref{sec-canalysis}.

Next we show the following claim.

\vspace*{4pt}\noindent{\bf Claim.}
  \emph{Let $M>0$.  For any  $0<\delta<1$
there exist $\tilde{\gamma}_1,\ldots,\tilde{\gamma}_M\in \Gamma$ such that}
\begin{itemize}
\item[(i)]\footnote{\emph{We identify a subset $Y'$ of the factor $Y$ with the subset $\pi^{-1}(Y')$ of $X$, where $\pi:(X,\mu,S)\to Y$ is the factor map.}} $\mu_{\Stone(X)}(F' \cap  (T_{\Stone(X)}^{\tilde{\gamma}_1})^{-1} F'\cap \ldots \cap( T_{\Stone(X)}^{\tilde{\gamma}_M})^{-1} F')>(\delta\mu_{\Stone(X)}(F'))^{2^M}$,
\item[(ii)] $\tilde{\gamma}_i^{-1}\tilde{\gamma}_j\in \Phi_{\alpha}$ \emph{for some $\alpha\in A$ whenever} $1\leq i<j\leq M$.
\end{itemize}

\vspace*{4pt}\noindent
{\em Proof of claim.} Observe that
by Theorem \ref{thm-neumann},
\begin{align}
\lim_{\alpha \in A} \frac{1}{|\Phi_\alpha|}\sum_{\gamma\in\Phi_\alpha} \mu_{\Stone(X)}(F' \cap (T^\gamma_{\Stone(X)})^{-1}(F')) = \langle 1_{F'}, \E[1_{F'}|Z_T] \rangle_{L^2(X)}
\end{align}
This equals $\| \E[1_{F'}|Z_T]] \|^2_{L^2(x)}$, which is bounded from below by
\begin{equation*} \| \E[1_{F'}|Z_T]] \|^2_{L^1(x)} = \mu_{\Stone(X)}(F')^2.\end{equation*}
By the pigeonhole principle, for every $0< \delta <1$ there exist $\alpha_1\in A$ and $\gamma_1\in \Phi_{\alpha_1}$ such that \begin{equation*} \mu_{\Stone(X)}(F'\cap (T_{\Stone(X)}^{\gamma_1})^{-1}(F'))> \delta (\mu_{\Stone(X)}(F'))^2.\end{equation*}

For a given F\o lner net $(\Phi_\alpha)_{\alpha\in A}$ and
 any finite family $\gamma_1,\ldots, \gamma_k \in \Gamma$,  we have that
$(\Phi_\alpha\cap \gamma_1 \Phi_\alpha \cap \ldots \cap \gamma_k \Phi_\alpha)_{\alpha\in A}$ is also a F\o lner net. Thus, we may iterate the previous argument.

Let $F_1=F'\cap (T_{\Stone(X)}^{\gamma_1})^{-1}(F')$. Repeating the previous argument, where we replace $F'$ by $F_1$, we find $\alpha_2\in A$ and $\gamma_2\in \Phi_{\alpha_2} \cap \gamma^{-1}_1\Phi_{\alpha_2}$ such that \begin{equation*} \mu_{\Stone(X)}(F_1\cap (T_{\Stone(X)}^{\gamma_2})^{-1}(F_1))>\delta (\mu_{\Stone(X)}(F_1))^2.\end{equation*}  This implies
\begin{equation*}\begin{split}
&\mu_{\Stone(X)}(F'\cap (T^{\gamma_1}_{\Stone(X)})^{-1}(F')\cap (T^{\gamma_2}_{\Stone(X)})^{-1}(F')\cap (T^{\gamma_1\gamma_2}_{\Stone(X)})^{-1}(F'))\\[2mm] &
>\delta^3 \mu_{\Stone(X)}(F')^4.
\end{split}
\end{equation*}
 Set  $F_2=F_1\cap (T_{\Stone(X)}^{\gamma_2})^{-1}(F_1)$. Choose $\alpha_3\in A$ and $\gamma_3\in \Phi_{\alpha_3}\cap \gamma_1^{-1}\Phi_{\alpha_3} \cap \gamma_2^{-1}\Phi_{\alpha_3} \cap (\gamma_1\gamma_2)^{-1}\Phi_{\alpha_3}$ such that \begin{equation*} \mu_{\Stone(X)}(F_2 \cap (T^{\gamma_3}_{\Stone(X)}(F_2))>\delta \mu_{\Stone(X)}(F_2)^2.\end{equation*}
After $M$ iterations we  obtain  $\tilde{\gamma}_1,\ldots,\tilde{\gamma}_M$ with the properties (i) and (ii), where  $\tilde{\gamma}_i=\gamma_1\gamma_2\cdots \gamma_i$ for $1\leq i\leq M$. This completes the proof of the claim. \qed

Now we choose $M=l^2+1$. By the above claim we find  $\tilde{\gamma}_1,\ldots,\tilde{\gamma}_M$ with the properties (i) and (ii).  By the pigeonhole principle, one can construct measurable functions $I,J:\Stone(Y)\to \{1,\ldots,M\}$ such that $I(y)<J(y)$,   $N^T_{\tilde{\gamma}_{I(y)}}(y)=N^T_{\tilde{\gamma}_{J(y)}}(y)$, and  $N^{ST}_{\tilde{\gamma}_{I(y)}}(y)=N^{ST}_{\tilde{\gamma}_{J(y)}}(y)$.
 Using \eqref{eqnapprox2} and  the triangle inequality, we obtain
\begin{align*}
       &\E(|T_{\Stone(X)}^{\tilde{\gamma}_I}(f)- T_{\Stone(X)}^{\tilde{\gamma}_J}(f) |^2|Y)<2\ep \quad \textup{on $F'$ \quad{and}}\\[2mm] &\E(|S_{\Stone(X)}^{\tilde{\gamma}_I}T_{\Stone(X)}^{\tilde{\gamma}_I}(g)- S_{\Stone(X)}^{\tilde{\gamma}_J} T_{\Stone(X)}^{\tilde{\gamma}_J}(g) |^2|Y)<2\ep \quad \textup{on } F'.
\end{align*}
These inequalities imply
\begin{align}\label{eqnapprox3}
   & \E(|f-T^{\tilde{\gamma}^{-1}_{I}\tilde{\gamma}_{J}}_{\Stone(X)}(f)|^2|Y)\circ T^{\tilde{\gamma}^{-1}_I}< 2\ep \quad \text{on } F'.\\[2mm] \nonumber
&     \E(|g-S^{\tilde{\gamma}^{-1}_{I}\tilde{\gamma}_{J}}_{\Stone(X)}T^{\tilde{\gamma}^{-1}_{I}\tilde{\gamma}_{J}}_{\Stone(X)}(g)|^2|Y)\circ T^{\tilde{\gamma}^{-1}_I}_{\Stone(X)}< 2\ep \quad \text{on $F'$}.
 \end{align}
 In the second inequality we have used that $S$ acts trivially on $Y$.
 Let us set \begin{equation*} D:=F' \cap T_{\Stone(X)}^{\tilde{\gamma}_1} F'\cap \ldots \cap  T_{\Stone(X)}^{\tilde{\gamma}_M} F'.\end{equation*}
Now for $y \in D$, we have $(T_{\Stone(X)}^{\tilde{\gamma}_{I(y)}})^{-1}(y)\in F'$.
Therefore, by \eqref{eqnapprox1} we obtain
\begin{align} \label{eqnapprox4}
    &T_{\Stone(X)}^{\tilde{\gamma}_{I(y)}}(\E(|\E(1_E|Z_T)-f|^2|Y))(y)<\varepsilon  \\[2mm] \nonumber
    &T_{\Stone(X)}^{\tilde{\gamma}_{I(y)}}(\E(|\E(1_E|Z_{ST})-g|^2|Y))(y)<\ep.
\end{align}
 for almost every $y \in D$.
 Moreover, since   $(T_{\Stone(X)}^{\tilde{\gamma}_{J(y)}})^{-1}(y)\in F'$,  we have
\begin{equation}
\label{eqnapprox5}\begin{split}
&T_{\Stone(X)}^{\tilde{\gamma}_I}(\E(|T^{\tilde{\gamma}^{-1}_{I}\tilde{\gamma}_{J}}_{\Stone(X)}(\E(1_E|Z_T)-T^{\tilde{\gamma}^{-1}_{I}\tilde{\gamma}_{J}}_{\Stone(X)}(f)|^2|Y) \\&  = T^{\tilde{\gamma}_J}_{\Stone(X)}\E(|\E(1_E|Z_T)-f|^2|Y)<\ep \quad \text{on $D$.} \end{split}
\end{equation}

Applying the triangle inequality to \eqref{eqnapprox3}, \eqref{eqnapprox4}, and \eqref{eqnapprox5}, we obtain
\begin{equation*} T^{\tilde{\gamma}_I}_{\Stone(X)}\E(|\E(1_E|Z_T)-T_{\Stone(X)}^{\tilde{\gamma}_{I}^{-1}\tilde{\gamma}_J}(\E(1_E|Z_T))|^2|Y)<4\ep\quad \text{on $D$.}\end{equation*}
Similarly, we have
\begin{equation*} T^{\tilde{\gamma}_I}_{\Stone(X)}\E(|\E(1_E|Z_{ST})-T_{\Stone(X)}^{\tilde{\gamma}_{I}^{-1}\tilde{\gamma}_J}(\E(1_E|Z_{ST}))|^2|Y)<4\ep \quad \text{on $D$}\end{equation*}
Let $\tilde{\gamma}:=\tilde{\gamma}(y)=\tilde{\gamma}_{I(y)}^{-1}\tilde{\gamma}_{J(y)}$.
Setting $\ep=\frac{rt}{16}$, we then obtain
\begin{equation*}\begin{split}
&\E(1_E T_{\Stone(X)}^{\tilde{\gamma}}(\E(1_E|Z_T))S_{\Stone(X)}^{\tilde{\gamma}}T_{\Stone(X)}^{\tilde{\gamma}}(\E(1_E|Z_{ST}))|Y)\\[2mm]&
> \E(1_E \E(1_E|Z_T)\E(1_E|Z_{ST})|Y) - 8\ep>\frac{rt}{2} \quad \text{on }D.
\end{split}
\end{equation*}

Let $\eta=(\delta\mu_{\Stone(X)}(F'))^{2^M}$. We can choose $B\subset D$ with  $\mu_{\Stone(X)}(B)>\frac{\eta}{M^2}$ such that $\tilde{\gamma}_0=\tilde{\gamma}(y)$ is constant on $B$.
Then
\begin{equation*}
\int_{\Stone(X)} 1_E T_{\Stone(X)}^{\tilde{\gamma}_0}(\E(1_E|Z_T)) S_{\Stone(X)}^{\tilde{\gamma}_0} T_{\Stone(X)}^{\tilde{\gamma}_0}(\E(1_E|Z_{ST}))>\frac{rt\eta}{2M^2}>0.
\end{equation*}
We have that $\tilde{\gamma}_0\in \Phi_\alpha$ for some $\alpha\in A$.
It follows from Proposition \ref{prop-syndetic} that
{\small\begin{equation*}
G=\bigg\{\tilde{\gamma}_0\in\Gamma: \int_{\Stone(X)} 1_E T_{\Stone(X)}^{\tilde{\gamma}_0}(\E(1_E|Z_T))S_{\Stone(X)}^{\tilde{\gamma}_0} T_{\Stone(X)}^{\tilde{\gamma}_0}(\E(1_E|Z_{ST}))>\frac{rt\eta}{2M^2}\bigg\}
\end{equation*}}is syndetic, since neither of the quantities $\E(1_E|Z_T), \E(1_E|Z_{ST}), r,t,\eta,M$ depend on the choice of the F\o lner net.
It follows from Lemma \ref{lem-density-syndeticity} that $\BD_\Gamma(G)>0$.
Thus
\begin{equation*}\begin{split}
&\lim_{\alpha \in A} \frac{1}{|\Phi_\alpha|}\sum_{\gamma\in \Phi_\alpha} \int_{\Stone(X)} 1_E T_{\Stone(X)}^{\tilde{\gamma}_0}(\E(1_E|Z_T))S_{\Stone(X)}^{\tilde{\gamma}_0} T_{\Stone(X)}^{\tilde{\gamma}_0}(\E(1_E|Z_{ST}))\\[2mm]&
>\bar{d}_\Phi(G)\frac{rt\eta}{2M^2}>0,
\end{split}
\end{equation*}
where $\bar{d}_\Phi(G)=\limsup_{\alpha} \frac{|G\cap \Phi_\alpha|}{|\Phi_\alpha|}$. We therefore obtain \eqref{eq:limit}, as needed.
\end{proof}

From Theorem \ref{thm-uncountableroth} we deduce
  Corollary \ref{maincor}, about the largeness of the set of return times.

\begin{proof}[Proof of Corollary \ref{maincor}]
Assume towards a contradiction that for each $n\in \mathbb{N}$,
\begin{equation*}
G_n=\{\gamma\in \Gamma: \mu(E\wedge T^\gamma E\wedge S^\gamma T^{\gamma} E)>1/n\}
\end{equation*}
is not syndetic. Let $(\Phi_\alpha)_{\alpha\in A}$ be a F\o lner net.
Since $G_n$ is not syndetic there exists $\gamma^n_\alpha\in \Gamma\setminus \Phi_\alpha^{-1} G_n$ for each $\alpha\in A,n\in\mathbb{N}$.
We have that $(\Phi^n_\alpha)_{\alpha\in A}$ defined by $ \Phi^n_\alpha:=\Phi_\alpha \gamma_\alpha^n$ is another F\o lner net for each $n\in\N$ such that $\Phi^n_\alpha \cap G_n=\emptyset$ for all $\alpha\in A,n\in \mathbb{N}$.
By construction, for every $\alpha\in A$,
\begin{equation}\label{avg}
    \frac{1}{|\Phi^n_\alpha|}\sum_{\gamma\in \Phi^n_\alpha} \mu(E\wedge T^\gamma E\wedge S^\gamma T^{\gamma} E) \leq \frac{1}{n}.
\end{equation}
By \cite[Theorem 1.1 (2)]{zorin2016norm}, the limit
\begin{equation*}  \lim_{\alpha \in A}  \frac{1}{|\Phi^n_\alpha|}\sum_{\gamma\in \Phi^n_\alpha} \mu(E\wedge T^\gamma E\wedge S^\gamma T^{\gamma} E)
\end{equation*}
exists and is independent of the choice of the F\o lner net. Moreover, by \eqref{avg}, this limit is less than any $\ep>0$, hence it must be zero. However, this contradicts Theorem \ref{thm-uncountableroth}.
\end{proof}

\subsection{Triangular patterns in $\Gamma\times\Gamma$ }\label{subsec2}

In this section we   employ Theorem~\ref{thm-uncountableroth} and Corollary~\ref{maincor} to prove Theorem \ref{prop-triangular}.
We proceed similarly as in  \cite{bergelson1997roth}.  By \cite[Proposition 0.16 (5)]{paterson}, $\Gamma\times\Gamma$ is a discrete amenable group if $\Gamma$ is so as well.
We will need a correspondence principle analogous to \cite[Proposition 6.2]{bergelson1997roth}.
In \cite[Theorem 2.1]{bergelson-schur}, Bergelson and McCutcheon establish a correspondence principle for countable amenable semigroups.  We adapt this proof for uncountable discrete amenable groups.

Let $\Omega=\{0,1\}^{\Gamma\times\Gamma}$. Then $\Omega$ is a totally disconnected compact Hausdorff space, thus a Stone space (see Appendix A). An element $\omega\in \Omega$ corresponds uniquely to a subset of $\Gamma\times\Gamma$.  We define an action of $\Gamma\times \Gamma$ on $\Omega$ as follows. First let $S:\Gamma\to \Aut(\Omega)$ be defined by $S^\gamma(\omega)(\theta,\zeta):=(\theta \gamma,\zeta)$ and $T:\Gamma\to \Aut(\Omega)$ be defined by $T^\gamma(\omega)(\theta,\zeta):= (\theta ,\zeta \gamma)$.
Since $T,S$ are commuting, so we can define
\begin{equation*}U:\Gamma\times\Gamma\to \Aut(\Omega), \ \ U(\theta,\gamma):=S(\theta)\circ T(\gamma).\end{equation*}
\begin{lemma}[An uncountable Furstenberg correspondence principle] \label{lem-correspondence}
Fix an invariant mean $m:\ell^\infty(\Gamma\times\Gamma)\to \R$ and let $\Lambda\subset \Gamma\times \Gamma$ be such that $m(1_\Lambda)>0$.
Let~$X$ be the $U$-orbit closure of $1_\Lambda$ in $\Omega$, that is, $X:=\overline{\{S^{\theta}T^{\gamma}(1_{\Lambda}):\theta,\gamma\in\Gamma\}}$.
Then there exits a $\CHProb_\Gamma$-Roth dynamical system $(X,\Baire(X),\mu,S,T)$ such that $\mu(\{\omega\in X: \omega(e,e)=1\})>0$ where $e$ is the identity element of the group $\Gamma$ and $\omega(e,e)$ is the evaluation of $\omega\in \Omega$ at the entry $(e,e)$.
\end{lemma}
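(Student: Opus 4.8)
The plan is to construct the measure $\mu$ on $X$ using the invariant mean $m$ and verify the three required properties: that $\mu$ is a Baire-Radon probability measure, that it is invariant under both $S$ and $T$ (so that $(X,\Baire(X),\mu,S,T)$ is a $\CHProb_\Gamma$-Roth system), and that the distinguished clopen set $\{\omega\in X:\omega(e,e)=1\}$ has positive measure. First I would set up the linear functional. For a continuous function $f\in C(X)$, the orbit map $(\theta,\gamma)\mapsto f(S^\theta T^\gamma(1_\Lambda))$ is a bounded function on $\Gamma\times\Gamma$, so I can define $L(f):=m\bigl((\theta,\gamma)\mapsto f(U(\theta,\gamma)(1_\Lambda))\bigr)$. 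Positivity and normalization ($L(1)=1$) of $L$ follow immediately from the corresponding properties of $m$. By the Riesz representation theorem on the compact Hausdorff space $X$, there is a unique Radon probability measure $\mu$ on the Baire $\sigma$-algebra with $L(f)=\int_X f\,d\mu$ for all $f\in C(X)$; since $X$ is a Stone space (being a closed subspace of $\Omega=\{0,1\}^{\Gamma\times\Gamma}$, or at least totally disconnected compact Hausdorff), this measure is automatically Baire-Radon.

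Next I would check invariance. For fixed $\gamma_0\in\Gamma$ and $f\in C(X)$, the function $f\circ T^{\gamma_0}$ is again in $C(X)$, and evaluating along the orbit gives $(f\circ T^{\gamma_0})(U(\theta,\gamma)(1_\Lambda)) = f(S^\theta T^{\gamma_0\gamma}(1_\Lambda))$, which is a right translate (in the second coordinate) of the function $(\theta,\gamma)\mapsto f(U(\theta,\gamma)(1_\Lambda))$. Here I must be careful about left versus right invariance of $m$: the mean in the statement is left-invariant, so I would either invoke the fact that for amenable groups one can also find a two-sided invariant mean, or — more cleanly — note that the relevant translation structure matches the one-sided invariance available (and appeal to the symmetry remarks in the paper, or replace $m$ by a bi-invariant mean on $\Gamma\times\Gamma$, which exists since $\Gamma\times\Gamma$ is amenable). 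Granting this, $L(f\circ T^{\gamma_0})=L(f)$, so $T^{\gamma_0}$ preserves $\mu$; the argument for $S^{\theta_0}$ is identical using the first coordinate. Since $S$ and $T$ already commute as stated before the lemma, $(X,\Baire(X),\mu,S,T)$ is a $\CHProb_\Gamma$-Roth dynamical system.

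Finally, for the positivity of $\mu(\{\omega\in X:\omega(e,e)=1\})$: let $A=\{\omega\in X:\omega(e,e)=1\}$, which is clopen in $X$, hence $1_A\in C(X)$. Unwinding the definitions, $1_A(U(\theta,\gamma)(1_\Lambda))=1$ precisely when $(U(\theta,\gamma)(1_\Lambda))(e,e)=1$, i.e. when $1_\Lambda(\theta^{-1},\gamma^{-1})=1$ (or the appropriate index depending on the exact convention in the $S,T$ definitions — this is a routine bookkeeping check). Thus $\mu(A)=L(1_A)=m\bigl((\theta,\gamma)\mapsto 1_\Lambda(\theta^{-1},\gamma^{-1})\bigr)$, which equals $m(1_{\Lambda^{-1}})$ up to the inversion map; since $m$ is invariant and $1_\Lambda\mapsto$ its reflection again has positive mean (invariant means are insensitive to this, or one uses a mean invariant under inversion), we conclude $\mu(A)=m(1_\Lambda)>0$.

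The main obstacle I anticipate is purely the bookkeeping around left/right actions, the antihomomorphism convention $T^\gamma(f)=f\circ T^{\gamma^{-1}}$ stated in the notation section, and which one-sided invariance of $m$ is actually needed — getting the translation directions to line up so that the invariance of $m$ applies verbatim. This is not deep but is the step most prone to sign/side errors; the cleanest fix is to work with a two-sided invariant mean on the amenable group $\Gamma\times\Gamma$ from the outset, which trivializes all the invariance verifications at once.
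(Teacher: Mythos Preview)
Your approach is correct and is a legitimate alternative to the paper's. The paper defines $\mu$ on the algebra of cylinder sets by $\mu(D)=m(1_{\gamma_1^{-1}\Lambda_1}\cdots 1_{\gamma_k^{-1}\Lambda_k})$, checks it is a premeasure using compactness, and extends via Carath\'eodory; you instead define the positive functional $L(f)=m\bigl((\theta,\gamma)\mapsto f(U(\theta,\gamma)(1_\Lambda))\bigr)$ on $C(X)$ and invoke Riesz representation. These produce the same measure, and neither is intrinsically preferable. The paper's route makes the cylinder-set formula (used in the subsequent proof of Theorem~\ref{prop-triangular}) immediate; yours would recover it by a one-line computation.

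Two bookkeeping remarks. First, your concern about left/right invariance is unnecessary: with the paper's conventions $S^\gamma(\omega)(\theta,\zeta)=\omega(\theta\gamma,\zeta)$ and $T^\gamma(\omega)(\theta,\zeta)=\omega(\theta,\zeta\gamma)$, one computes $(f\circ T^{\gamma_0})(U(\alpha,\beta)(1_\Lambda))=f(U(\alpha,\gamma_0\beta)(1_\Lambda))$, which is exactly a \emph{left} translate in $\ell^\infty(\Gamma\times\Gamma)$, so the given left-invariance of $m$ applies verbatim---no bi-invariant mean is needed. Second, the same conventions give $(U(\theta,\gamma)(1_\Lambda))(e,e)=1_\Lambda(\theta,\gamma)$, so $L(1_A)=m(1_\Lambda)$ directly, with no inversion. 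This matters, because your hedge ``invariant means are insensitive to this'' is false in general: on a nonabelian amenable group a left-invariant mean need not satisfy $m(1_{\Lambda^{-1}})=m(1_\Lambda)$. Fortunately the correct computation never produces $\Lambda^{-1}$, so the issue does not arise.
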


\begin{proof}
The collection $\mathcal{O}$  of cylinder sets
\begin{equation}\label{eq:cylinder}
\{x\in X: x(\gamma_1)=a_1,\ldots, x(\gamma_k)=a_k\}
\end{equation}
(where $k\in\N$, $\gamma_i\in \Gamma\times\Gamma$, $a_i\in\{0,1\}$, $1\leq i\leq k$) is a clopen base of the topology of $X$.  Let $\mathcal{A}$ be the Boolean algebra generated by $\mathcal{O}$ and $\mathcal{X}$ be the corresponding $\sigma$-algebra. By the  Stone-Weierstra\ss\  theorem, we have $\mathcal{X}=\Baire(X)$.
For a cylinder set $D$ of the form \eqref{eq:cylinder}, define
\begin{equation*}
\mu(D):=m(1_{\gamma_1^{-1} \Lambda_1}\cdot \ldots \cdot 1_{\gamma_k^{-1} \Lambda_k}),
\end{equation*}
 where $\Lambda_i=\Lambda$ if $a_i=1$ and $\Lambda_i=\Lambda^c$ if $a_i=0$, $1\leq i\leq k$.
 By compactness, $\mu$ is a premeasure on $\mathcal{A}$ and thus can be extended to a Baire probability measure on $\mathcal{X}$ by the Carath\'eodory extension theorem.
 Any Baire probability measure on a compact Hausdorff space is Radon  (e.g., see \cite[Proposition 4.2(iii)]{jt-foundational}).
 By construction, $\mu$ is $T$- and $S$-invariant and satisfies
\begin{equation*}\mu(\{\omega\in X: \omega(e,e)=1\})=m(1_\Lambda)>0.\end{equation*}
\end{proof}

\begin{proof}[Proof of Proposition \ref{prop-triangular}]
By Lemma \ref{lem-correspondence}, there exists a $U$-invariant Baire probability measure $\mu$ on  $X:=\overline{\{S^{\theta}T^{\gamma}1_{E}:\theta,\gamma\in\Gamma\}}$ such that
\begin{equation}\label{key identity for  cor 14}
\mu(U^{\gamma_1}(A)\cap \ldots \cap U^{\gamma_n}(A))=m(1_{\gamma_1^{-1} \Lambda}\cdot \ldots \cdot 1_{\gamma_n^{-1} \Lambda}),
\end{equation}
where $A=\{\omega\in X: \omega(e,e)=1\}$.
We now pass from the $\CHProb_\Gamma$-Roth dynamical system $(X,\mathcal{X},\mu,S,T)$, for which  \eqref{key identity for  cor 14} holds, to the corresponding $\OpProbAlgG$-dynamical system $(X_\mu,\bar\mu,\bar S,\bar T)$ by the deletion and abstraction process described in Section \ref{sec-canmodel}. This allows us to apply  Corollary \ref{maincor} and thereby obtain that
\begin{equation*}
\Theta:=\{\gamma\in \Gamma : \bar\mu([A]\wedge \bar T^\gamma([A])\wedge \bar S^\gamma \bar T^{\gamma}([A]))>0\}
\end{equation*}
is syndetic in $\Gamma$.
Thus for every $\gamma\in \Theta$ we can choose
$ \xi\in A\cap T^\gamma(A)\cap S^\gamma T^{\gamma}(A)$.
Since $A$ is open and $\xi\in \overline{\{U^{(\theta,\zeta)}(1_\Lambda):(\theta,\zeta)\in \Gamma\times\Gamma\}}$,
there exists $(\theta,\zeta)\in \Gamma\times\Gamma$ such that
\begin{equation*}
S^{\theta}T^{\zeta}(1_{\Lambda})=U^{(\theta,\zeta)}(1_{\Lambda})\in A\cap T^\gamma(A)\cap S^\gamma T^{\gamma}(A).
\end{equation*}
Therefore, $
(\theta,\zeta)$,  $(\gamma \theta,\zeta)$, and $(\gamma \theta,\gamma \zeta)$ are all in $ \Lambda$.
This finishes the proof.
\end{proof}

\section{Uniform syndeticity in the amenable ergodic Roth theorem}\label{sec-uniformity}

In this section, we prove our main application Theorem \ref{thm-syndec}.
In the proof we apply tools from ultralimit analysis (aka non-standard analysis).
In the following two lemmas, we establish some relations between lower Banach densities of a sequence of sets and the lower Banach density of their ultraproduct set.
These lemmas will be useful in the proof of Theorem \ref{thm-syndec} below.

\begin{lemma}\label{lem-dens2}
Let $\mathcal{G}$ be a uniformly amenable set of groups. Let $(\Gamma_n)$ be a sequence in $\mathcal{G}$.  Let $p$ be a non-principal ultrafilter on $\N$ and $\Gamma_*=\prod_{n\to p} \Gamma_n$ the ultraproduct of $(\Gamma_n)$.  Let $(m_n)$ be a sequence of invariant finitely additive probability measures $m_n:\mathcal{P}(\Gamma_n)\to [0,1]$.
Then we can associate to $(m_n)$ an invariant finitely additive probability measure $m:\mathcal{P}(\Gamma_*)\to [0,1]$.
\end{lemma}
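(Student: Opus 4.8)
The plan is to construct $m$ directly from the sequence $(m_n)$ using the ultrafilter, and then upgrade the resulting finitely additive invariant set function on a subalgebra to one defined on all of $\mathcal{P}(\Gamma_*)$ via a Hahn--Banach/invariant extension argument. First I would recall the structure of the ultraproduct group $\Gamma_* = \prod_{n\to p}\Gamma_n$ and its \emph{internal} subsets: these are sets of the form $A_* = \prod_{n\to p} A_n$ with $A_n \subset \Gamma_n$, and they form a Boolean algebra $\mathcal{I}$ (closed under finite unions, intersections, complements) inside $\mathcal{P}(\Gamma_*)$. For such an internal set I would define
\begin{equation*}
\tilde m(A_*) := \lim_{n\to p} m_n(A_n),
\end{equation*}
where the limit is the ultralimit in $[0,1]$, which always exists. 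One checks this is well defined (independent of the representing sequence $(A_n)$, since if $A_* = B_*$ then $A_n = B_n$ for $p$-almost all $n$), finitely additive on $\mathcal{I}$, satisfies $\tilde m(\Gamma_*) = 1$, and is invariant: for $g_* = (g_n)_{n\to p} \in \Gamma_*$ we have $g_* A_* = \prod_{n\to p} g_n A_n$, so $\tilde m(g_* A_*) = \lim_{n\to p} m_n(g_n A_n) = \lim_{n\to p} m_n(A_n) = \tilde m(A_*)$ using invariance of each $m_n$.

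Next I would extend $\tilde m$ from the internal algebra $\mathcal{I}$ to all of $\mathcal{P}(\Gamma_*)$. The clean way is to view $\tilde m$ as a positive, $\Gamma_*$-invariant, normalized linear functional on the space of $\mathcal{I}$-simple functions, dominate it by the sublinear functional $f \mapsto \sup_{\Gamma_*} f$ (equivalently work with the $\Gamma_*$-invariant seminorm coming from averaging), and invoke an invariant Hahn--Banach extension theorem --- precisely the Silverman-type extension result for invariant means cited in the paper (\cite{silverman1,silverman2}) --- to extend it to a $\Gamma_*$-invariant positive linear functional $m$ on all of $\ell^\infty(\Gamma_*)$. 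Restricting $m$ to indicator functions gives the desired finitely additive invariant probability measure $m:\mathcal{P}(\Gamma_*)\to[0,1]$; positivity of the functional forces $m$ to take values in $[0,1]$, normalization gives $m(\Gamma_*)=1$, linearity gives finite additivity, and $\Gamma_*$-invariance of the functional gives $m(g_*\Lambda)=m(\Lambda)$ for all $\Lambda\subset\Gamma_*$ and $g_*\in\Gamma_*$. One should double-check that the extension genuinely agrees with $\tilde m$ on internal sets, which is built into the Hahn--Banach construction.

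The main obstacle, and the reason the uniform amenability hypothesis on $\mathcal{G}$ is invoked, is ensuring that the invariant Hahn--Banach extension actually applies --- i.e.\ that $\Gamma_*$ admits invariant means at all, which requires $\Gamma_*$ to be amenable. A priori an ultraproduct of amenable groups need not be amenable; the uniform amenability of $\mathcal{G} = \mathcal{G}(F)$ (with a common F\o lner-control function $F$) is exactly what guarantees $\Gamma_*$ is amenable, since one can take the $p$-ultraproduct of F\o lner sets $\Phi_n \subset \Gamma_n$ with $|\Phi_n| \le F(n,\varepsilon)$ (a uniform, hence "internal-sized", bound) to produce a genuine F\o lner-type set in $\Gamma_*$; this is presumably established in Appendix \ref{sec-amsyn} and I would cite it. The secondary technical point to handle carefully is the well-definedness and $\sigma$-issues when passing between the internal algebra and the full power set --- but since we only ever ask for \emph{finite} additivity, no countable additivity is needed and the Hahn--Banach route sidesteps all measurability subtleties. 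Once amenability of $\Gamma_*$ and the agreement of the extension with $\tilde m$ on internal sets are secured, the construction of $m$ is complete.
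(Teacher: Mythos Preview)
Your proposal is correct and follows essentially the same route as the paper: define the Loeb-type measure $\tilde m(A_*)=\lim_{n\to p} m_n(A_n)$ on the Boolean algebra of internal sets, pass to the associated invariant positive functional on (the span of) their indicators, and then invoke Silverman's invariant Hahn--Banach extension theorem \cite{silverman1,silverman2} to obtain an invariant mean on all of $\ell^\infty(\Gamma_*)$. The paper phrases the extension step via the observation that the closed linear hull $\mathcal{D}$ of internal indicators \emph{majorizes} $\ell^\infty(\Gamma_*)$ (since constants lie in $\mathcal{D}$), which is exactly the hypothesis Silverman needs; your sup-domination formulation is the same thing. Your identification of the role of uniform amenability---namely that Silverman's theorem requires the acting group $\Gamma_*$ to be amenable, which is guaranteed by Proposition~\ref{prop-uniform-amenability}---is also on the mark, even though the paper leaves this implicit.
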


We denote by $\st$ the standard part of a non-standard real number.

\begin{proof}
By \L os's theorem, the Loeb measure
\begin{equation*}
m(A_*)\coloneqq \st(\lim_{n\to p} m_n(A_n))
\end{equation*}
is an invariant finitely additive probability measure on the algebra of internal subsets $A_*=\prod_{n\to p} A_n$ of $\Gamma_*$ where $A_n\subset \Gamma_n$ for each $n$.
Define $M(1_{A_*})\coloneqq m(A_*)$, and extend $M$ to the closed linear hull $\mathcal{D}$ of $\{1_{A_*}: A_*\subset \Gamma_* \text{ internal}\}$ in $\ell^\infty(\Gamma_*)$ by linearity and continuity.
The closed subspace $\mathcal{D}$ majorizes $\ell^\infty(\Gamma_*)$ in the sense that for every $f\in \ell^\infty(\Gamma_*)$ there exists $g\in \mathcal{D}$ such that $f\leq g$ (we can take $g=\|f\|_\infty$ since $1\in \mathcal{D}$).
By  Silverman's Hahn-Banach extension theorem for invariant means \cite{silverman2,silverman1}, we can extend\footnote{This extension is  not unique in general, however this will not cause an issue later.} $M$ to an invariant mean on the whole space $\ell^\infty(\Gamma_*)$.
\end{proof}

\begin{lemma}\label{lem-density1}
Let $\mathcal{G}$ be a uniformly amenable set of groups. Let $(\Gamma_n)$ be a sequence in $\mathcal{G}$.  Let $p$ be a non-principal ultrafilter on $\N$ and $\Gamma_*=\prod_{n\to p} \Gamma_n$ the ultraproduct of $(\Gamma_n) $.
Let $A=\prod_{n\to p} A_n$ be an internal subset of $\Gamma_*$, where $A_n\subset \Gamma_n$ for each $n$.
Then we have
\begin{equation}\label{eq-dens1}
\st(\lim_{n\to p}\BD_{\Gamma_n}(A_n))=\inf\{\st(\lim_{n\to p} m_n(A_n))\colon m_n\in \mathcal{M}_n, \, n\in\N\},
\end{equation}
where $\mathcal{M}_n$ is the collection of invariant finitely additive probability measures on $\Gamma_n$ for each $n$.
\end{lemma}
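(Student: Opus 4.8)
The plan is to prove directly the two inequalities comprising \eqref{eq-dens1}, using only that $\BD_{\Gamma_n}(A_n)$ is by \eqref{def:lbd-gen} the infimum of $\nu(A_n)$ over $\nu\in\mathcal{M}_n$, together with elementary monotonicity and finite additivity of ultralimits of bounded real sequences. First I would record the preliminaries. Since $\mathcal{G}$ is uniformly amenable, every $\Gamma_n$ is amenable (see Appendix \ref{sec-amsyn}), so $\mathcal{M}_n\neq\emptyset$ and each $\BD_{\Gamma_n}(A_n)$ is a well-defined element of $[0,1]$; moreover the ultralimit $\lim_{n\to p}x_n$ of any $[0,1]$-valued sequence exists and has standard part in $[0,1]$, so both sides of \eqref{eq-dens1} are well-defined reals.

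To prove the inequality $\leq$ in \eqref{eq-dens1}, I would take an arbitrary sequence $(m_n)$ with $m_n\in\mathcal{M}_n$ for all $n$. Then $m_n(A_n)\geq\BD_{\Gamma_n}(A_n)$ for every $n$ by the definition of the infimum; since $\lim_{n\to p}$ and $\st(\cdot)$ are monotone on bounded sequences, this gives $\st(\lim_{n\to p}m_n(A_n))\geq\st(\lim_{n\to p}\BD_{\Gamma_n}(A_n))$. Taking the infimum over all such sequences $(m_n)$ yields that the right-hand side of \eqref{eq-dens1} is at least the left-hand side.

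For the reverse inequality $\geq$ in \eqref{eq-dens1}, I would make a near-optimal selection. For each $n$, since $\BD_{\Gamma_n}(A_n)$ is an infimum, choose $m_n\in\mathcal{M}_n$ with $m_n(A_n)\leq\BD_{\Gamma_n}(A_n)+1/n$. Using finite additivity of the ultralimit together with $\lim_{n\to p}(1/n)=0$ (valid since $p$ is non-principal), one gets $\lim_{n\to p}m_n(A_n)\leq\lim_{n\to p}\BD_{\Gamma_n}(A_n)$, hence $\st(\lim_{n\to p}m_n(A_n))\leq\st(\lim_{n\to p}\BD_{\Gamma_n}(A_n))$. As this $(m_n)$ is admissible in the infimum on the right of \eqref{eq-dens1}, that infimum is at most the left-hand side, and the proof is complete. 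The only point requiring care — not a genuine obstacle — is that $\BD_{\Gamma_n}(A_n)$ need not be attained, so one cannot pick a pointwise optimal $m_n$; the near-optimal choice above, combined with $\lim_{n\to p}(1/n)=0$, is exactly what makes the $\geq$ direction go through, and everything else is formal. I emphasize that this lemma makes no claim about $\BD_{\Gamma_*}$ of the internal set $A=\prod_{n\to p}A_n$ itself; that separate comparison is where Lemma \ref{lem-dens2} and the Loeb-measure/Silverman input enter, and it is not needed here.
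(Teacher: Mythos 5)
Your proof is correct and takes essentially the same route as the paper: the $\leq$ direction is identical (monotonicity of $\lim_{n\to p}$ and $\st$ applied to $\BD_{\Gamma_n}(A_n)\leq m_n(A_n)$), and your $\geq$ direction is the paper's argument packaged directly via the near-optimal selection $m_n(A_n)\leq \BD_{\Gamma_n}(A_n)+1/n$, where the paper instead argues by contradiction but implicitly makes the same kind of selection. One cosmetic remark: $\lim_{n\to p}(1/n)$ is strictly a positive infinitesimal rather than $0$, but its standard part vanishes, which is exactly what your estimate uses, so nothing is lost.
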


\begin{proof}
First, we show ``$\leq$'' in \eqref{eq-dens1}.
By definition, $\BD_{\Gamma_n}(A_n)\leq m_n(A_n)$ for all $m_n\in\mathcal{M}_n$ for all $n$. Hence, $\lim_{n\to p} \BD_{\Gamma_n}(A_n)\leq \lim_{n\to p} m_n(A_n)$ for any sequence $(m_n)$ with $m_n\in\mathcal{M}_n$, and thus $\st(\lim_{n\to p} \BD_{\Gamma_n}(A_n))\leq \st(\lim_{n\to p} m_n(A_n))$.  We obtain ``$\leq$'' in \eqref{eq-dens1} upon taking the infimum over all possible sequences $(m_n)$ with $m_n\in \mathcal{M}_n$ for each $n$.

Second, we show ``$\geq$'' in \eqref{eq-dens1} by contradiction, that is, assume that we had $C<D$, where we denote by $C,D$ the left- and right-hand side of \eqref{eq-dens1}, respectively.
By definition, $\{n\in \N\colon \BD_{\Gamma_n}(A_n)< D\}\in p$ which implies that
\begin{equation*}
\{n\in \N\colon \exists \; m_n\in\mathcal{M}_n \text{ such that } m_n(A_n)< D\}\in p.
\end{equation*}
In particular, $\{n\in \N\colon m_n(A_n)< \st(\lim_{n\to p} m_n(A_n))\}\in p$, and it follows that $ \st(\lim_{n\to p} m_n(A_n))< \st(\lim_{n\to p} m_n(A_n))$ which is absurd. This proves the ``$\geq$'' part in~\eqref{eq-dens1}.
\end{proof}

We are now in a good position to prove Theorem \ref{thm-syndec}.

\begin{proof}
Let $\mathcal{G}$ be a uniformly amenable set of groups.
Suppose by contradiction that there exists $\ep>0$ such that for every $n$ there are $\Gamma_n\in\mathcal{G}$ , a  $\OpProbAlg_{\Gamma_n}$-Roth dynamical system $(X_n,\mu_n,T_{X_n},S_{X_n})$ with canonical concrete $\CHProb_{\Gamma_n}$-representation \begin{equation*}(\Stone(X_n),\Baire(\Stone(X_n)),\mu_{\Stone(X_n)},T_{\Stone(X_n)},S_{\Stone(X_n)}), \end{equation*}
and $E_n\in \Baire(\Stone(X_n))$ with $\mu_{\Stone(X_n)}(E_n)\geq \ep$ such that
\begin{equation}\label{eq-contra1}
\BD_{\Gamma_n}(\{\gamma\in \Gamma_n\colon \mu_{\Stone(X_n)}(E_n\cap T_{\Stone(X_n)}^\gamma E_n\cap S_{\Stone(X_n)}^\gamma T_{\Stone(X_n)}^\gamma E_n)>1/n\})\leq 1/n.
\end{equation}
Fix a non-principal ultrafilter $p$ on $\N$.
Construct the ultraproduct $\OpProbAlg_{\Gamma_*}$-Roth dynamical system $(X_\mu,\mu_{X_\mu},T,S)$ from the sequence
\begin{equation*}
(\Stone(X_n),\Baire(\Stone(X_n)),\mu_{\Stone(X_n)},T_{\Stone(X_n)},S_{\Stone(X_n)})
\end{equation*}
 by the recipe in Appendix \ref{sec-ultra}.
By construction, we have $\mu_{X_\mu}([E_*])\geq \ep$ where $E_*=\prod_{n\to p} E_n$.
By Corollary \ref{maincor}, there exists $\delta>0$ such that  \begin{equation}\label{eq-density}
 D\coloneqq \BD_{\Gamma_*}(
\{\gamma_* \in \Gamma_* \colon \mu_{X_\mu}([E_*] \wedge T_*^{\gamma_*} [E_*]\wedge S_*^{\gamma_*} T_*^{\gamma_*} [E_*])>\delta\})>0.
\end{equation}
Unwrapping all definitions, we have
\begin{align}\label{eq-synd}
&\{\gamma_* \in \Gamma_* \colon \mu_{X_\mu}([E_*] \wedge T_*^{\gamma_*} [E_*]\wedge S_*^{\gamma_*} T_*^{\gamma_*} [E_*])\} \\[2mm]
\notag &= \prod_{n\to p} \{\gamma_n \in \Gamma_n : \mu_{\Stone(X_n)}(E_n \cap T_{\Stone(X_n)}^{\gamma_n} E_n \cap S_{\Stone(X_n)}^{\gamma_n}T_{\Stone(X_n)}^{\gamma_n} E_n)\}.
\end{align}
Denote   $A_n=\{\gamma_n \in \Gamma_n : \mu_{\Stone(X_n)}(E_n \cap T_{\Stone(X_n)}^{\gamma_n} E_n \cap S_{\Stone(X_n)}^{\gamma_n}T_{\Stone(X_n)}^{\gamma_n} E_n)\}$.
It follows from Lemma \ref{lem-dens2} and \eqref{eq-synd} that
\begin{equation*}
D\leq \inf\{\st(\lim_{n\to p} m_n(A_n))\colon m_n: \mathcal{P}(\Gamma_n)\to [0,1],\, n\in \N\},
\end{equation*}
where the $m_n$ denote finitely additive invariant probability measures.
By Lemma~\ref{lem-density1},
\begin{align}\label{eq-contra}
& D\leq \\[2mm] \nonumber
 & \st(\lim_{n\to p} \BD_{\Gamma_n}(\{\gamma_n\in \Gamma_n\colon \mu_{\Stone(X_n)}(E_n\cap T_{\Stone(X_n)}^{\gamma_n} E_n\cap S_{\Stone(X_n)}^{\gamma_n} T_{\Stone(X_n)}^{\gamma_n} E_n)>\delta\})).
\end{align}
Define the set $R$ to be all $n\in \N$ that satisfy
\begin{equation*}
\BD_{\Gamma_n}(\{\gamma_n\in \Gamma_n\colon \mu_{\Stone(X_n)}(E_n\cap T_{\Stone(X_n)}^{\gamma_n} E_n\cap S_{\Stone(X_n)}^{\gamma_n} T_{\Stone(X_n)}^{\gamma_n} E_n)>\delta\})\leq \frac{1}{n}.
\end{equation*}
Then, by hypothesis \eqref{eq-contra1},  $R$ contains all but finitely many $n$.
Since the Fr\'echet filter is contained in any non-principal ultrafilter (see the beginning of Appendix \ref{sec-ultra}), we have $R\in p$.
Therefore it follows from \eqref{eq-contra} that $D$ must be zero, however this contradicts \eqref{eq-density}.
\end{proof}

\appendix

\section{Boolean algebras and the Stone representation theorem}\label{sec-BA}

A \emph{Boolean algebra} is a ring $(X, +, \cdot)$ with a multiplicative identity $1$ in which $x^2=x$ for every $x\in X$. We always assume the non-degeneracy condition $0\neq 1$.
A~prototypical example is $(\X,\Delta,\cap)$ where $X$ is any set and $\X \subset 2^X$ is an algebra of subsets of $X$, and $\Delta$ is the symmetric difference.
Its zero is the empty set $\emptyset$ and the multiplicative identity is $X$.
In particular, we have the trivial algebra $(\{\emptyset,X\},\Delta,\cap)$ which is ring-isomorphic to the finite field $(\mathbb{F}_2,+,\cdot)$. Given a boolean algebra $(X,+,\cdot)$ and $x,y\in X$, we set  $x\Delta y= x+y$, $x\wedge y =x\cdot y$,  $x\vee y = x+y+xy$ and denote $x\leq y$ if and only if $x\cdot y=x$.

A \emph{subalgebra} is a subring
of $X$ which contains its multiplicative identity.
A set $I\subset X$ is an ideal if and only if $0\in I$, $x \vee y\in I$ for all $x,y\in I$, and $x\in I$ whenever $x\leq y$ and $y\in I$. Note that while an ideal in a Boolean algebra is necessarily a subring, it constitutes a subalgebra only if it is $X$ itself.  Moreover, the quotient ring $X/I$ is a Boolean algebra called the \emph{quotient algebra}.

A map $f:X \rightarrow Y$ between two Boolean algebras $X$ and $Y$ is called a \emph{Boolean homomorphism} if it is a ring homomorphism,  that is, $f(x\Delta y) =f(x)\Delta f(y)$ and $f(x\wedge y) =f(x)\wedge f(y)$, and maps the multiplicative identity of $X$ to the multiplicative identity of $Y$. Note that $f(X)$ is a subalgebra of $Y$.

A Boolean algebra is called \emph{$\sigma$-complete} if every non-empty countable subset has a least upper bound.
An ideal $I$ of a Boolean algebra is called a \emph{$\sigma$-ideal} if every non-empty countable subset of $I$ has a least upper bound in $I$.
If $I$ is a $\sigma$-ideal $I$ in a $\sigma$-complete Boolean algebra $X$, then the quotient algebra $X/I$ is $\sigma$-complete as well.

Any abstract Boolean algebra can be represented by a concrete Boolean algebra of sets by Stone's representation theorem, as follows.
Consider the set $Z_X$ of all (non-zero) ring homomorphisms from $X$ to $\mathbb{F}_2$. The image $s(X)$ under the map
  $s:X \rightarrow 2^{X}$,  $x \mapsto s(x)=\{f\in Z_X: f(x)=1\}$ is a base of a topology on $Z_X$. The set $Z_X$ equipped with the topology generated by $s(X)$ is called the \emph{Stone space} of the Boolean algebra $X$. Moreover one can show that $s(X)$ corresponds to the set of all clopen (closed and open) subsets of $Z_X$. Therefore $Z_X$ is a totally disconnected space.  Moreover one can show that it is also compact Hausdorff.
By \emph{Stone's representation theorem}, the set of all clopen subsets of $Z_X$ equipped with the usual set operations is a Boolean algebra isomorphic to $X$, see \cite[Section 7]{koppelberg89} for a comprehensive introduction into this topological version of Stone duality.
The Stone space $Z_X$ can also be regarded as a closed subspace of a generalized Cantor space. More precisely, $Z_X$ is a closed subspace of $\{0,1\}^X=\mathbb{F}_2^X$ viewed as topological product space with $\{0,1\}$ endowed with the discrete topology (see \cite[Section 7]{koppelberg89} for details).
Any Boolean homomorphism $f:X\to Y$ between Boolean algebras $X$ and $Y$ can be uniquely represented as a continuous function $\hat{f}:Z_Y\to Z_X$ given by $\hat{f}(\alpha)=\alpha\circ f$ where $\alpha\in Z_Y$ is a Boolean homomorphism from $Y$ to $\mathbb{F}_2$.
This correspondence is a contravariant functor between the category Boolean algebras and Stone spaces, which  establishes a well-known equivalence of categories, known as Stone duality.

\section{Amenability, syndeticity, and uniform amenability}\label{sec-amsyn}
\subsection{Amenability and syndeticity}\label{appendix_B1}
A discrete group $\Gamma$ is said to be \emph{amenable} if it satisfies one of the following equivalent conditions:
\begin{itemize}
    \item[(i)]  (\emph{F\o lner condition}) For every  finite set $\Psi\subset \Gamma$ and every $0<\ep<1$ there exists a finite set $\Phi=\Phi(\ep,\Psi)\subset \Gamma$ such that
    \begin{equation} \label{folner:first}
         \max_{\gamma\in \Psi} |\Phi \Delta \gamma \Phi|\leq \ep |\Phi|.
    \end{equation}
    \item[(ii)] There exists a \emph{F\o lner net} for $\Gamma$, that is,  a net $(\Phi_\alpha)_{\alpha\in A}$ of non-empty finite subsets of $\Gamma$, such that
    \begin{equation*}
    \lim_{\alpha \in A} \frac{|\Phi_\alpha \Delta \gamma \Phi_\alpha|}{|\Phi_\alpha|}\to 0
    \end{equation*}
    for all $\gamma\in \Gamma$.
    \item[(iii)] There exists an \emph{invariant mean} for $\Gamma$, that is a positive linear functional $m: \ell^\infty(\Gamma)\to \R$ with the properties that $m(1)=1$ and $m(\gamma f)=m(f)$, where $(\gamma f)(\gamma'):=f (\gamma^{-1}\gamma')$ is the left-regular representation.
    \item[(iv)]  There exists an \emph{invariant finitely additive probability measure} $\mu:\mathcal{P}(\Gamma)\to [0,1]$, that is a finitely additive probability measure $\mu:\mathcal{P}(\Gamma)\to [0,1]$ such that $\mu(\gamma E)=\mu(E)$ for all $\gamma\in\Gamma$ and $E\subset \Gamma$, where $\gamma E=\{\gamma \tilde{\gamma}:\tilde{\gamma}\in E\}$.
\end{itemize}
See \cite{Leptin, paterson} for a proof of these equivalences.

One is often  interested when a certain subset of a discrete amenable group is syndetic. The following proposition gives a sufficient condition for a subset of a discrete amenable group  to be syndetic.

\begin{proposition}\label{prop-syndetic}
Let $\Gamma$ be a discrete amenable group.   Let $E$ be a subset of $\Gamma$.  If for every left F\o lner net $(\Phi_\alpha)_{\alpha\in A}$ for $\Gamma$ there exists $\alpha\in A$ such that $\Phi_\alpha\cap E\neq \emptyset$, then $E$ is syndetic.
\end{proposition}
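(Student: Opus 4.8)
The plan is to argue by contraposition. Suppose $E\subset\Gamma$ is not syndetic; I would show that there is a left F\o lner net $(\Phi_\alpha)_{\alpha\in A}$ for $\Gamma$ with $\Phi_\alpha\cap E=\emptyset$ for every $\alpha\in A$, which contradicts the hypothesis. The key observation is that ``$E$ is not syndetic'' means precisely that for every finite $F\subset\Gamma$ we have $\bigcup_{\gamma\in F}\gamma E\neq\Gamma$, equivalently $\bigcap_{\gamma\in F}\gamma(E^c)\neq\emptyset$, i.e.\ the complement $E^c$ is \emph{thick}: it contains a left translate of every finite subset of $\Gamma$ (choosing $g\notin\bigcup_{\gamma\in F}\gamma E$ gives $F^{-1}g\subset E^c$, so $E^c$ contains the translate $F^{-1}g$ of the arbitrary finite set $F^{-1}$).

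First I would fix a left F\o lner net $(\Psi_\alpha)_{\alpha\in A}$ for $\Gamma$, which exists by amenability (Appendix~\ref{sec-amsyn}(ii)). For each $\alpha$, since $\Psi_\alpha$ is finite, thickness of $E^c$ provides an element $g_\alpha\in\Gamma$ with $\Psi_\alpha g_\alpha\subset E^c$, hence $\Psi_\alpha g_\alpha\cap E=\emptyset$. Now set $\Phi_\alpha:=\Psi_\alpha g_\alpha$. I claim $(\Phi_\alpha)_{\alpha\in A}$ is again a left F\o lner net: right translation does not affect the left-F\o lner ratios, because $|\Phi_\alpha\Delta\gamma\Phi_\alpha| = |(\Psi_\alpha\Delta\gamma\Psi_\alpha)g_\alpha| = |\Psi_\alpha\Delta\gamma\Psi_\alpha|$ and $|\Phi_\alpha|=|\Psi_\alpha|$, so
\begin{equation*}
\frac{|\Phi_\alpha\Delta\gamma\Phi_\alpha|}{|\Phi_\alpha|}=\frac{|\Psi_\alpha\Delta\gamma\Psi_\alpha|}{|\Psi_\alpha|}\xrightarrow[\alpha\in A]{}0
\end{equation*}
for every $\gamma\in\Gamma$. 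Thus $(\Phi_\alpha)_{\alpha\in A}$ is a left F\o lner net with $\Phi_\alpha\cap E=\emptyset$ for all $\alpha$, contradicting the hypothesis that every left F\o lner net meets $E$. Therefore $E$ is syndetic.

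The only mild subtlety — and the one step I would be careful to state cleanly — is the translation from ``not syndetic'' to ``complement is thick,'' since one must keep track of left versus right translates and of the fact that we need a translate of the arbitrary finite set $\Psi_\alpha$ appearing in the fixed F\o lner net, not of some other set. This is a purely combinatorial manipulation with no analytic content: $g\notin\bigcup_{\gamma\in F}\gamma E$ is equivalent to $\gamma^{-1}g\notin E$ for all $\gamma\in F$, i.e.\ $F^{-1}g\cap E=\emptyset$, and letting $F$ range over all finite subsets of $\Gamma$ lets $F^{-1}$ range over all finite subsets as well, so we may directly take $F^{-1}=\Psi_\alpha$. No other obstacle arises; amenability is used only to supply the initial F\o lner net, and the right-invariance of the left-F\o lner condition under right translation does the rest.
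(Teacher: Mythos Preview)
Your proof is correct and follows essentially the same approach as the paper's: assume $E$ is not syndetic, start from any left F\o lner net, and right-translate each F\o lner set to miss $E$ (using that non-syndeticity of $E$ gives, for each finite $\Psi_\alpha$, an element $g_\alpha$ with $\Psi_\alpha g_\alpha\cap E=\emptyset$), then observe that right translation preserves the left F\o lner condition. The paper's proof is just a terser version of exactly this argument, writing the translate directly as $h_\alpha\in\Gamma\setminus\Phi_\alpha^{-1}E$ rather than passing through the language of thickness.
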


\begin{proof}
Towards a contradiction assume that $E$ is not syndetic. Let $(\Phi_\alpha)_{\alpha\in A}$ be an arbitrary F\o lner net. Since we assumed that $E$ is not syndetic and each $\Phi_\alpha$ is finite, we must have that there exists $h_\alpha \in \Gamma \backslash \Phi_\alpha^{-1}E$ for each $\alpha\in A$. Now $(\Phi_\alpha h_\alpha)_{\alpha\in A}$ is a F\o lner net (as can be easily seen from translation invariance of Haar counting measure on $\Gamma$) such that $\Phi_\alpha h_\alpha \cap E=\emptyset$ contradicting the hypothesis, thus $E$ is syndetic.
\end{proof}

Bergelson, Hindman and McCutcheon established the following   characterization of syndeticity in discrete amenable groups in \cite[Theorem 2.7(a)]{bergelson-size}.

\begin{lemma}\label{lem-density-syndeticity}
Let $\Gamma$ be a discrete amenable group.
A subset $E\subset \Gamma$ is syndetic if and only if $\BD_\Gamma(E)>0$.
\end{lemma}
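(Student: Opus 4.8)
The plan is to prove the two directions separately, with essentially all of the work in the converse, which I would handle by contraposition.

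\emph{Syndetic $\Rightarrow$ $\BD_\Gamma(E)>0$.} Here I would argue directly: if $\bigcup_{\gamma\in F}\gamma E=\Gamma$ for a finite $F\subset\Gamma$, then for any invariant finitely additive probability measure $\nu$, finite subadditivity together with $\nu(\gamma E)=\nu(E)$ gives $1=\nu(\Gamma)\le\sum_{\gamma\in F}\nu(\gamma E)=|F|\,\nu(E)$, so $\nu(E)\ge 1/|F|$; taking the infimum over all such $\nu$ yields $\BD_\Gamma(E)\ge 1/|F|>0$.

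\emph{$\BD_\Gamma(E)=0$ when $E$ is not syndetic.} For the converse it suffices to construct a single invariant finitely additive probability measure $\nu$ with $\nu(E)=0$. I would first fix a left Følner net $(\Phi_\alpha)_{\alpha\in A}$ for $\Gamma$ (available by amenability). Since $\Phi_\alpha^{-1}$ is finite and $E$ is not syndetic, $\Phi_\alpha^{-1}E\neq\Gamma$, so I may pick $t_\alpha\in\Gamma\setminus\Phi_\alpha^{-1}E$; unwinding this says exactly that $\Phi_\alpha t_\alpha\cap E=\emptyset$, i.e.\ $\Phi_\alpha t_\alpha\subset E^{c}$. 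I would then let $\mu_\alpha\in\ell^\infty(\Gamma)^{*}$ be the normalized counting functional supported on $\Phi_\alpha t_\alpha$, namely $\mu_\alpha(f)=\frac{1}{|\Phi_\alpha|}\sum_{g\in\Phi_\alpha}f(gt_\alpha)$. Each $\mu_\alpha$ is a mean with $\mu_\alpha(1_E)=0$, and since right translation by $t_\alpha$ is a bijection of $\Gamma$, for every $\gamma\in\Gamma$ and $f\in\ell^\infty(\Gamma)$ one has $|\mu_\alpha(\gamma f)-\mu_\alpha(f)|\le\|f\|_\infty\,|\gamma^{-1}\Phi_\alpha\,\Delta\,\Phi_\alpha|/|\Phi_\alpha|\to 0$ by the Følner property, so the net $(\mu_\alpha)$ is asymptotically left-invariant.

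Finally I would pass to a weak-$*$ cluster point $m$ of $(\mu_\alpha)$ in the weak-$*$ compact unit ball of $\ell^\infty(\Gamma)^{*}$: positivity and $m(1)=1$ pass to the limit, the weak-$*$ continuous functionals $\mu\mapsto\mu(\gamma f)-\mu(f)$ vanish at $m$ for each fixed $\gamma,f$ (being limits of quantities tending to $0$), so $m$ is a left-invariant mean, and $m(1_E)=0$ as a cluster point of the constant net $0$. Then $\nu(A):=m(1_A)$ is the desired invariant finitely additive probability measure, and $\BD_\Gamma(E)=0$ follows. The step I expect to need the most care is the left/right bookkeeping — ensuring that failure of \emph{left} syndeticity produces the \emph{right}-translated sets $\Phi_\alpha t_\alpha$ disjoint from $E$, consistently with the left Følner property and the left-invariance used thereafter; everything else is routine, and one could alternatively just invoke \cite{bergelson-size}.
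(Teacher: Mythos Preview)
Your argument is correct in both directions, and the left/right bookkeeping you flagged is handled properly: failure of left syndeticity gives $t_\alpha\notin\Phi_\alpha^{-1}E$, hence $\Phi_\alpha t_\alpha\cap E=\emptyset$, and the right translate $t_\alpha$ is harmless for the asymptotic \emph{left}-invariance estimate $|\mu_\alpha(\gamma f)-\mu_\alpha(f)|\le\|f\|_\infty\,|\gamma^{-1}\Phi_\alpha\Delta\Phi_\alpha|/|\Phi_\alpha|$.

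There is nothing to compare with, however: the paper does not supply its own proof of this lemma but simply attributes it to \cite[Theorem~2.7(a)]{bergelson-size} (exactly the citation you mention as an alternative at the end). Your write-up is thus a self-contained substitute for that citation, and the argument you give is the standard one.
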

The proof of Lemma \ref{lem-density-syndeticity} in \cite{bergelson-size} relies on the notion of the lower Banach density defined in \eqref{def:lbd-gen}. For the integers, this definition is equivalent to  the definition  \eqref{def:lowerbd-Z}.

\begin{lemma}
For every $A\subset \Z$ we have
    \begin{align}\label{bd1}
    & \liminf_{b-a \to \infty}\frac{|A\cap  \{a,a+1,\ldots,b\}|}{b-a+1}\\[2mm]\label{bd2}
   &=\inf \Big \{\liminf_{n\to \infty} \frac{|A\cap F_n| }{|F_n|}: (F_n)_{n\in \Z} \text{ is a F\o lner sequence for $\Z$} \Big \}\\[2mm] \label{bd3}
    &=
   \inf\{ \nu(A): \nu \text{ is an invariant finitely additive probability measure on $\Z$} \}.
\end{align}
\end{lemma}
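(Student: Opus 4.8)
The plan is to show the three quantities in the display---call them $q_1,q_2,q_3$, in order of appearance (so $q_1$ is the interval lower Banach density, $q_2$ the infimum over F\o lner sequences, $q_3$ the infimum over invariant finitely additive probability measures)---coincide by proving the cyclic chain $q_1\ge q_2\ge q_3\ge q_1$. The first two inequalities are soft. For $q_1\ge q_2$, note that $m_L:=\inf\{|A\cap I|/|I|:I\subset\Z\text{ of the form }\{a,\dots,b\},\ |I|\ge L\}$ is nondecreasing in $L$ with $\lim_L m_L=q_1$; picking for each $L$ an interval $I_L$ with $|I_L|\ge L$ and $|A\cap I_L|/|I_L|<m_L+1/L$ produces a sequence with $|A\cap I_L|/|I_L|\to q_1$ and $|I_L|\to\infty$, and any such sequence of lengthening intervals is a F\o lner sequence for $\Z$ (for fixed $\gamma$ one has $|I_L\,\Delta\,(\gamma+I_L)|\le 2|\gamma|$ once $|I_L|>|\gamma|$), hence it is admissible in the infimum defining $q_2$ and $q_2\le q_1$. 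For $q_2\ge q_3$, given an arbitrary F\o lner sequence $(F_n)$ I would fix a non-principal ultrafilter $p$ on $\N$ along which the bounded sequence $|A\cap F_n|/|F_n|$ converges to its $\liminf$, and set $\nu(B):=\lim_{n\to p}|B\cap F_n|/|F_n|$ for $B\subset\Z$; ultrafilter limits are positive and linear, so $\nu$ is a finitely additive probability measure on $\mathcal P(\Z)$, it is invariant because $\big||B\cap(F_n-\gamma)|-|B\cap F_n|\big|\le|F_n\,\Delta\,(\gamma+F_n)|=o(|F_n|)$ by the F\o lner property, and $\nu(A)=\liminf_n|A\cap F_n|/|F_n|$ by the choice of $p$; taking the infimum over F\o lner sequences gives $q_3\le q_2$.

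The step I expect to be the main obstacle is $q_3\ge q_1$, where one must bound an \emph{arbitrary} invariant finitely additive probability measure from below by the interval density. Let $\nu$ be such a measure and put $B:=\Z\setminus A$; since $\BDU_\Z(B):=\limsup_{b-a\to\infty}|B\cap\{a,\dots,b\}|/(b-a+1)=1-q_1$, it suffices to prove $\nu(B)\le\BDU_\Z(B)$. Fix $\ep>0$ and choose $L$ so that $|B\cap I|\le(\BDU_\Z(B)+\ep)|I|$ for every interval $I$ with $|I|\ge L$. For $N\ge L$ consider the bounded simple function $f(x):=\tfrac1N|B\cap\{x,\dots,x+N-1\}|=\tfrac1N\sum_{j=0}^{N-1}1_{B-j}(x)$ on $\Z$. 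On one hand $f(x)\le\BDU_\Z(B)+\ep$ for every $x$, because $\{x,\dots,x+N-1\}$ is an interval of length $N\ge L$. On the other hand, integrating the simple function $f$ against $\nu$ (a finite sum) and using invariance, $\int_\Z f\,d\nu=\tfrac1N\sum_{j=0}^{N-1}\nu(B-j)=\nu(B)$. Combining, $\nu(B)\le\BDU_\Z(B)+\ep$; letting $\ep\downarrow 0$ gives $\nu(B)\le\BDU_\Z(B)$, equivalently $\nu(A)\ge q_1$, and since $\nu$ was arbitrary $q_3=\inf_\nu\nu(A)\ge q_1$.

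Chaining the three inequalities yields $q_1=q_2=q_3$, which is the assertion. The routine parts are the F\o lner verification for truncated intervals and the standard ultrafilter bookkeeping in the construction of $\nu$; the one substantive observation, used in the last paragraph, is that the interval-average $f$ of $1_B$ has $\nu$-integral exactly $\nu(B)$ by invariance while its sup-norm is controlled by $\BDU_\Z(B)+\ep$ once $N\ge L$.
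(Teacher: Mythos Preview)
Your proof is correct. The cyclic chain $q_1\ge q_2\ge q_3\ge q_1$ closes cleanly; the only step requiring any care is the ultrafilter construction in $q_2\ge q_3$ (one needs $p$ to contain the sets $\{n:|A\cap F_n|/|F_n|<\liminf+1/m\}$ for each $m$, which together with the cofinite sets have the finite intersection property), and your averaging argument for $q_3\ge q_1$ is exactly right.

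The paper organizes the argument differently. It proves the equivalence $q_1=q_2$ directly (both inequalities), and then outsources $q_2=q_3$ to \cite[Theorem~4.17]{paterson}. For the nontrivial direction $q_1\le q_2$ the paper uses a double-counting argument: given any finite $F\subset\Z$ and any $\beta>q_2$, one finds a large F\o lner set $G$ with $|A\cap(y+G)|/|G|\le\beta$ for all $y\in F$, and then the identity $\sum_{x\in G}|A\cap(x+F)|=\sum_{y\in F}|A\cap(y+G)|$ forces some translate $x+F$ to have density at most $\beta$; applying this with $F$ an interval gives $q_1\le q_2$. Your argument for $q_3\ge q_1$ is essentially the same averaging idea in disguise---you integrate the sliding-window average $f(x)=\tfrac{1}{N}\sum_{j}1_{B-j}(x)$ against $\nu$ instead of averaging over a large F\o lner set---but placed at a different edge of the cycle. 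The payoff of your route is that it is fully self-contained (no appeal to Paterson) and arguably more transparent, since the step $\nu(B)=\int f\,d\nu\le\|f\|_\infty$ isolates exactly where invariance is used; the paper's route, on the other hand, yields the slightly stronger intermediate fact that \emph{every} F\o lner sequence has translates realizing the infimum $q_2$.
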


\begin{proof}
We start by proving that \eqref{bd1} equals  \eqref{bd2}. We follow the argument in \cite{Bergelson} which discusses an analogous statement for the upper Banach density.
First note that an arbitrary sequence of intervals
\begin{equation}
(\{a_n,\ldots, b_n\})_{n} \quad  \text{with}\quad b_n-a_n\to \infty \quad \text{ as  $\quad n\to \infty$}
\end{equation}
 is a F\o lner sequence for $\Z$.
Thus, \eqref{bd2} is no greater than \eqref{bd1}.

To see the reverse inequality, it suffices to prove that given any set $A\subseteq \Z$ and  any F\o lner sequence $(F_n)_n$  there is a sequence  $(x_n)_{n}$ such that \eqref{bd2} equals
\begin{equation}\label{bdproof}
\lim_{n\to \infty}\frac{|A\cap (x_n+F_n)|}{|F_n|}.\end{equation}
To show this, it suffices to show that given any $\beta>\eqref{bd2}$ and any finite set $F$, there exists $x\in \Z$ such that
\begin{equation*}
    \frac{| A\cap (x+F)|}{|F|}\leq \beta.
    \end{equation*}

Observe that if there is a F\o lner sequence $(G_n)_n$ such that for some $G\in \{G_n\}_n$,  one has
\begin{equation}\label{eq:G}
\frac{|A\cap (y+G)|}{|G|}\leq \beta\quad \text{for each}\quad y\in F,
\end{equation}
then we have
\begin{equation*} \sum_{x\in G} |A\cap (x+F)| = |\{(y,x)\in F\times G: y+x \in A \}| = \sum_{y\in F}|A\cap (y+G)| \leq \beta |G| |F|.
\end{equation*}
Dividing by $|G| |F|$ and argue by pigeonholing, we complete the proof.

We prove the existence of a F\o lner sequence $(G_n)_n$ such that for some $G\in \{G_n\}_n$ we have \eqref{eq:G}, by contradiction. Suppose then that for every F\o lner sequence  $(G_n)_n$ and every $n$ there exists $y_n\in F$ such that $|A\cap (y_n+G_{n})|/|G_{n}|> \beta$. Since $(G_n)_n$ is F\o lner, for any $\ep>0$, there is $n_0$ such that if $n\geq n_0$,  $|(y_n+G_n) \Delta G_n|/|G_n|\leq ilon$. This, in turn,  implies $\eqref{bd2} \geq \beta$, a contradiction.

The equivalence of \eqref{bd2} and \eqref{bd3} is classical and can  be derived   from \cite[Theorem 4.17]{paterson}\footnote{We thank Joel Moreira for providing us with this reference.}.
\end{proof}

\subsection{Uniform amenability}
The notion of uniform amenability  was  introduced by Keller \cite{keller} with the purpose of defining a notion of amenability that is stable under switching to non-standard models (ultrapowers) of a given group.
The condition stated in  \eqref{def:unifam}  is nowadays referred to as the {\em uniform F\o lner condition}.

The integers $\Z$ are an example of a uniformly amenable group. More generally, all solvable groups are uniformly amenable   \cite{keller,Dronka}. Note that the class of solvable groups includes examples of uncountable groups.
 Moreover, finite products of uniformly amenable groups, as well as subgroups and homomorphic images of a uniformly amenable group are all uniformly amenable. Extensions of a uniformly amenable group by a uniformly amenable group are  uniformly amenable. See \cite[Section 4]{keller} for these properties.

The uniform F\o lner condition is indeed a uniform version of the F\o lner condition given in \eqref{folner:first}. Hence, every uniformly amenable group is amenable.  The converse implication does not hold. The group $S_\infty$ of permutations of $\N$ which move only a finite number of elements is an example of a group which is amenable but not uniformly amenable, see \cite{Dronka}. See \cite{Wysoczanski} for another example.

Proposition \ref{prop-uniform-amenability} below characterizes  uniformly amenable groups and uniformly amenable sets of   groups via ultraproducts.

\section{Ultraproducts of measure-preserving dynamical systems}\label{sec-ultra}

An \emph{ultrafilter} on $\mathbb{N}$ is a non-empty collection $p$ of subsets of $\mathbb{N}$ satisfying the following properties:
\begin{itemize}
    \item[(i)] $\emptyset\not\in p$,
    \item[(ii)] $A\cap B\in p$ whenever $A,B\in p$,
    \item[(iii)] $B\in p$ whenever $A\in p, A\subset B$,
    \item[(iv)] for all $A\subset \mathbb{N}$ either $A\in p$ or $A^c\in p$.
\end{itemize}
Property (iv) distinguishes an ultrafilter from a filter.
An ultrafilter $p$ is said to be \emph{principal} if there is a non-empty set $A\subset \mathbb{N}$ such that $p=\{B\in\mathcal{P}(\N)\colon A\subset B\}$. An ultrafilter is \emph{non-principal} if it is not principal.  The existence of non-principal ultrafilters is only guaranteed by the axiom of choice.  More precisely, consider the Fr\'echet filter $\mathcal{F}$ which is the smallest filter containing all cofinal sets $\{n,n+1,\ldots\}$, $n\in \N$. By the Boolean prime ideal theorem, there exists an ultrafilter $p$ containing $\mathcal{F}$.
By construction, $p$ is non-principal.
On the other hand, any non-principal ultrafilter $p$ contains the Fr\'echet filter.  Indeed, since $p$ is an ultrafilter, it must contain either $\{n,n+1,\ldots\}$ or its complement by property (iv) above. But if it contained the complement of $\{n,n+1,\ldots\}$, then it would be a principal ultrafilter.

In what follows, the ultrafilter $p$ on $\N$ is fixed.
Let $\mathcal{G}$ be a uniformly amenable set of discrete groups.
For each $n\in\N$, let $\Gamma_n\in \mathcal{G}$, let $(X_n,\mu_n,T_{n})$ be a $\OpProbAlg_{\Gamma_n}$-dynamical system, and let
$(\Stone(X_n), \Baire(\Stone(X_n)), \mu_{\Stone(X_n)},T_{\Stone(X_n)})$ be the corresponding canonical model in $\CHProb_{\Gamma_n}$  (see Definition \ref{def-spaces}).

The aim of this appendix is to sketch the construction of a  $\OpProbAlg_{\Gamma_*}$-dynamical system induced by a  $\ConcProb_{\Gamma_*}$-dynamical system associated to an ultraproduct measure-preserving dynamical system of the sequence of systems
\begin{equation*}
(\Stone(X_n),\Baire(\Stone(X_n)), \mu_{\Stone(X_n)},T_{\Stone(X_n)}),
\end{equation*}
where $\Gamma_*$ is the ultraproduct group of the sequence $(\Gamma_n)$ of uniformly amenable groups.
The group $\Gamma_*$ is defined as the quotient group of $\prod_{n\in\N} \Gamma_n$ with respect to the equivalence relation $(\gamma_n)\sim (\tilde{\gamma}_n)$ whenever $\{n\in\N\colon \gamma_n=\tilde{\gamma}_n\}\in p$.

The following characterization of uniform amenability was established by Keller in \cite[Theorem 4.3 and Lemma 5.3]{keller}.

\begin{proposition}\label{prop-uniform-amenability}
A discrete group $\Gamma$ is uniformly amenable if and only if for every non-principal ultrafilter $p$ on $\mathbb{N}$ the ultrapower group
\begin{equation*}
\Gamma_*=\prod_{n\to p} \Gamma
\end{equation*}
is amenable (given the discrete topology).
Similarly, a set $\mathcal{G}$ of discrete groups is  uniformly amenable if and only if for every non-principal ultrafilter $p$ on $\mathbb{N}$ and for any sequence $(\Gamma_n)$ of groups in $\mathcal{G}$ the ultraproduct
\begin{equation*}
\Gamma_*=\prod_{n\to p} \Gamma_n
\end{equation*}
is amenable (given the discrete topology).
\end{proposition}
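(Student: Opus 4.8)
The plan is to establish both equivalences by combining \L os's theorem (the transfer principle for ultraproducts) with the elementary fact, recorded at the start of Appendix~\ref{sec-ultra}, that every non-principal ultrafilter on $\N$ contains the Fr\'echet filter, so that any cofinite condition on the index holds $p$-almost surely; this essentially reproduces Keller's argument. It suffices to treat the statement for a set $\mathcal{G}$ of groups, the single-group assertion being the special case $\Gamma_n=\Gamma$ for all $n$.

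\emph{Uniform amenability implies amenability of every ultraproduct.} Suppose $\mathcal{G}=\mathcal{G}(F)$ is uniformly amenable, fix a sequence $(\Gamma_n)$ in $\mathcal{G}$, a non-principal ultrafilter $p$, and $\Gamma_*=\prod_{n\to p}\Gamma_n$. I would verify the F\o lner condition \eqref{folner:first} for $\Gamma_*$ directly. Given a finite set $\Psi_*=\{[(\psi^1_n)_n],\dots,[(\psi^k_n)_n]\}\subset\Gamma_*$ and $0<\eps<1$, the sets $\Psi_n=\{\psi^1_n,\dots,\psi^k_n\}$ have cardinality at most $k$, so \eqref{def:unifam} supplies $\Phi_n\subset\Gamma_n$ with $|\Phi_n|\le F(k,\eps)$ and $\max_{\gamma\in\Psi_n}|\Phi_n\Delta\gamma\Phi_n|\le\eps|\Phi_n|$. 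Since $F(k,\eps)$ does not depend on $n$, a pigeonhole argument along $p$ yields a fixed $m\le F(k,\eps)$ with $\{n:|\Phi_n|=m\}\in p$; fixing enumerations $\Phi_n=\{\phi^1_n,\dots,\phi^m_n\}$ on this set, put $\Phi_*=\{[(\phi^1_n)_n],\dots,[(\phi^m_n)_n]\}$. Applying \L os's theorem to the relevant finite combinatorial identities (which become quantifier-free once the $\phi^i$ and $\psi^j$ are named as constants) gives $|\Phi_*|=m$ and $\max_{\gamma_*\in\Psi_*}|\Phi_*\Delta\gamma_*\Phi_*|\le\eps|\Phi_*|$, so $\Gamma_*$ is amenable.

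\emph{Amenability of every ultraproduct implies uniform amenability.} I would argue by contraposition. If $\mathcal{G}$ is not uniformly amenable then, writing $h(k,\eps)$ for the supremum over all $\Gamma\in\mathcal{G}$ and all $\Psi\subset\Gamma$ with $|\Psi|\le k$ of the minimal cardinality of a finite $\Phi$ with $\max_{\gamma\in\Psi}|\Phi\Delta\gamma\Phi|\le\eps|\Phi|$ (with the convention $\inf\emptyset=+\infty$), one has $h(k_0,\eps_0)=+\infty$ for some pair $(k_0,\eps_0)$, since otherwise $F=h$ would witness uniform amenability of $\mathcal{G}$. Hence for each $N\in\N$ there are $\Gamma_N\in\mathcal{G}$ and $\Psi_N\subset\Gamma_N$ with $|\Psi_N|\le k_0$ admitting no $\Phi\subset\Gamma_N$ with $|\Phi|\le N$ and $\max_{\gamma\in\Psi_N}|\Phi\Delta\gamma\Phi|\le\eps_0|\Phi|$. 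Pigeonholing on $|\Psi_N|\in\{1,\dots,k_0\}$ along $p$, I may assume $|\Psi_N|$ equals a constant $k$ for $p$-many $N$, and form $\Psi_*=\prod_{N\to p}\Psi_N\subset\Gamma_*$ with $|\Psi_*|=k$. Were $\Gamma_*$ amenable, there would be a finite $\Phi_*\subset\Gamma_*$ of some cardinality $M$ with $\max_{\gamma_*\in\Psi_*}|\Phi_*\Delta\gamma_*\Phi_*|\le\eps_0|\Phi_*|$; by \L os's theorem the associated finite sets $\Phi_N\subset\Gamma_N$ satisfy $|\Phi_N|=M$ and $\max_{\gamma\in\Psi_N}|\Phi_N\Delta\gamma\Phi_N|\le\eps_0 M$ for $p$-many $N$. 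Since $\{N:N\ge M\}\in p$, on the intersection of these $p$-large index sets $\Phi_N$ has $|\Phi_N|=M\le N$ and is $\eps_0$-F\o lner for $\Psi_N$, contradicting the choice of $\Psi_N$; hence $\Gamma_*$ is not amenable.

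I expect the only real difficulty to be bookkeeping rather than conceptual. The key point is that a finite subset of $\Gamma_*$ must be presented by sequences of a \emph{fixed} length $m$ (respectively $M$, $k$) in order for \L os's theorem to transfer the F\o lner estimate between $\Gamma_*$ and the $\Gamma_n$, which is exactly where the pigeonhole-along-$p$ steps on cardinalities enter. One should also be careful to match the non-strict inequality in \eqref{def:unifam} against the strict inequality in the negation of the F\o lner condition when deriving the contradiction, and to use, as above, that $\{N:N\ge M\}\in p$ precisely because $p$ is non-principal.
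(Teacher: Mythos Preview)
The paper does not supply its own proof of this proposition; it simply records the statement and attributes it to Keller \cite[Theorem 4.3 and Lemma 5.3]{keller}. Your argument is a correct and self-contained reconstruction of the standard proof via \L os's theorem, and there is nothing to compare against in the paper itself.

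A couple of minor points worth noting for completeness. In the forward direction, once you have enumerated $\Phi_n=\{\phi^1_n,\dots,\phi^m_n\}$ on a $p$-large set, you should also remark that the resulting elements $[(\phi^i_n)_n]$ are pairwise distinct in $\Gamma_*$ (so $|\Phi_*|=m$); this follows because distinctness of the $\phi^i_n$ for all $n$ in the $p$-large set transfers. In the converse direction, you implicitly use that every finite subset $\Phi_*\subset\Gamma_*$ is internal, i.e.\ can be written as $\prod_{N\to p}\Phi_N$ with $|\Phi_N|=|\Phi_*|$ for $p$-many $N$; this is standard but worth stating, since it is what licenses the backward transfer of the F\o lner inequality. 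Both points are routine, and your handling of the strict/non-strict inequality and of the cofinite condition $\{N:N\ge M\}\in p$ is correct.
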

Similarly to $\Gamma_*$, we define the ultraproduct $X_*=\prod_{n\to p} \Stone(X_n)$ as the set of equivalence classes of elements of the product  $\prod_{n\in \N} \Stone(X_n)$ with respect to the equivalence relation  $(x_n)\sim (y_n)$ defined by $\{n\in\N \colon x_n=y_n\}\in p$.
Let
\begin{equation*}
\mathcal{A}=\{\prod_{n\to p} E_n: (E_n)\in \prod_{n\in\N} \Baire(\Stone(X_n))\}.
\end{equation*}

Using the ultrafilter axioms, one can verify that $\mathcal{A}$ is an algebra of subsets of $X_*$.

The Loeb  premeasure  $\mu_*:\mathcal{A}\to [0,1]$ is defined by
\begin{equation*}
\mu_*(\prod_{n\to p} E_n):=\st(\lim_{n\to p} \mu_{\Stone(X_n)}(E_n)),
\end{equation*}
where $\st$ denotes the standard part of a non-standard real. Using the countable saturation property and Carath\'eodory's extension theorem, $\mu_*$ can be extended to a measure $\mu$ on the $\sigma$-algebra $\mathcal{X}=\sigma(\mathcal{A})$ generated by $\mathcal{A}$ in $X_*$. Let $(X_\mu,\bar\mu)$ denote the probability algebra of $(X,\mathcal{X},\mu)$.

Let us denote by $\Aut(\mathcal{A},\mu_*)$ the automorphism group of $(\mathcal{A},\mu_*)$, that is the group of Boolean isomorphisms $f:\mathcal{A}\to \mathcal{A}$ such that $\mu_*(E)=\mu_*(f(E))$ for all $E\in\mathcal{A}$.
By chasing definitions, one can check that the sequence $(T_{\Stone(X_n)})$ of measure-preserving continuous actions induces a concrete action $T_*:\Gamma_*\to \Aut(\mathcal{A},\mu_*)$ by defining
\begin{equation*}
(T_*)^{\gamma_*}(\prod_{n\to p} E_n)\coloneqq \prod_{n\to p} T^{\gamma_n}_{\Stone(X_n)}(E_n)
\end{equation*}
for all $\gamma_*=[(\gamma_n)]\in \Gamma_*$ and $\prod_{n\to p} E_n\in\mathcal{A}$.
By construction, $\mathcal{A}$ is an algebra of sets which is dense in $\mathcal{X}$ with respect to the pseudo-metric $d(E,F)=\mu(E\Delta F)$ on $\mathcal{X}$.
We define the abstract action $\bar T:\Gamma_*\to \Aut(X_\mu,\bar\mu)$ by
\begin{equation*}
\bar T^{\gamma_*}([E])\coloneqq \bigvee_n [T_*^{\gamma_*}(E_n)]
\end{equation*}
where $[E]$ denotes the equivalence class of $E\in \X$ in $X_\mu$, and $(E_n)$ is a sequence in $\mathcal{A}$ such that $\mu(E_n\Delta E)\to 0$ as $n$ tends to infinity. Observe that  the definition of  $\bar T^{\gamma_*}([E])$ is independent of the choice of representatives and the approximating sequence.
We thus obtain a $\OpProbAlg_{\Gamma_*}$-dynamical system $(X_\mu,\bar\mu,\bar T)$.

Finally, suppose that for each $n\in\N$, $(X_n,\mu_n,S_n)$ is another  $\OpProbAlg_{\Gamma_n}$-dynamical system  such that $S_n$ and $T_n$ commute. Construct $(X_\mu,\bar\mu,\bar S)$ analogously to $(X_\mu,\bar\mu,\bar T)$ as before. Then $S$ and $T$ commute (which is easily seen by first verifying commutativity of $S_*$ and $T_*$ on $\mathcal{A}$), and therefore $(X_\mu,\bar \mu,\bar S,\bar T)$ becomes a $\OpProbAlg_{\Gamma_*}$-Roth dynamical system.

\begin{remark}
In \cite[\S 3,4]{ultraproducts-mps}, Conlon, Kechris and Tucker-Drob give an ultraproduct construction of a sequence $(X_n,\mathcal{X}_n,\mu_n,T_n)$ of $\ConcProb_\Gamma$-dynamical systems where $(X_n,\mathcal{X}_n,\mu_n)$ are standard Borel probability spaces and $\Gamma$ is a fixed countably infinite group, which is based off a construction of Elek and Szegedy \cite{elek-szegedy} for finite probability spaces.
They define a pointwise action of $\Gamma$ (and not its ultrapower) on the Loeb probability space associated to the sequence $(X_n,\mathcal{X}_n,\mu_n)$ by  taking the ultralimit of the sequence $(T_n)$ of the pointwise actions.
\end{remark}

\section{A conditional Heine-Borel covering lemma}\label{sec-canalysis}

This appendix is devoted to the following technical lemma which is needed in the proof of Theorem~\ref{thm-uncountableroth}.
Throughout this section,  $\Gamma$ is some group and $\pi:(X,\mu,T)\to (Y,\nu,S)$ a $\OpProbAlgG$-factor map.
We need the following notation.
\begin{align*}
    \langle f,g\rangle_{X|Y}&:=\E(f \bar{g}|Y), \quad f,g\in L^2(X),\\[2mm]
    \|f\|_{X|Y}&:=\E(|f|^2|Y)^{1/2}, \quad f\in L^2(X),
\end{align*}
where $\bar{g}$ indicates complex conjugation.
\begin{lemma}\label{lem-heineborel}
Suppose $\mathcal{M}\subset L^2(X)$ is a finitely generated, closed, and $\Gamma$-invariant $L^\infty(Y)$ submodule of $L^2(X)$. Let $f\in \mathcal{M}$ be such that $\|\|f\|_{X|Y}\|_{L^\infty(Y)}<\infty$.
Then for every $\ep>0$ there exists a finite set $\mathcal{N}\subset \mathcal{M}$ such that for every $\gamma\in\Gamma$,
\begin{equation}\label{eq:hb}\min_{h\in\mathcal{N}}\|T^\gamma(f)-h\|_{X|Y}\leq \ep.\end{equation}
\end{lemma}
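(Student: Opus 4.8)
The plan is to treat $L^2(X)$ as a module over the commutative von Neumann algebra $L^\infty(Y)$, equipped with the $L^\infty(Y)$-valued inner product $\langle\cdot,\cdot\rangle_{X|Y}$ and the conditional seminorm $\|\cdot\|_{X|Y}$, and to run two conditional analogues of classical arguments: a conditional Gram–Schmidt procedure to produce a convenient orthonormal frame for $\mathcal{M}$, and a conditional Heine–Borel/total-boundedness argument to cover a ``conditional ball'' by finitely many translates of a conditional $\ep$-ball. First I would use that $\mathcal{M}$ is finitely generated: pick generators $e_1,\dots,e_k$ of $\mathcal{M}$ as an $L^\infty(Y)$-module. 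Applying a conditional Gram–Schmidt process (on each piece of a countable partition of $\Stone(Y)$ where the relevant conditional Gram determinants are nonvanishing, and discarding the degenerate pieces), I would produce finitely many $u_1,\dots,u_r\in\mathcal{M}$ with $\langle u_i,u_j\rangle_{X|Y}=\delta_{ij}p_i$ for suitable idempotents $p_i\in L^\infty(Y)$ (projections onto the sets where the $i$-th vector survives), such that every $g\in\mathcal{M}$ can be written $g=\sum_{i=1}^r c_i u_i$ with $c_i=\langle g,u_i\rangle_{X|Y}\in L^\infty(Y)$ whenever $\|\,\|g\|_{X|Y}\,\|_{L^\infty(Y)}<\infty$, and then $\|g\|_{X|Y}^2=\sum_i |c_i|^2 p_i$ pointwise a.e. over $\Stone(Y)$.

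Next I would exploit $\Gamma$-invariance of $\mathcal{M}$: since $T^\gamma$ is a module map over the $S$-action on $L^\infty(Y)$ and preserves $\langle\cdot,\cdot\rangle_{X|Y}$ up to composition with $S^\gamma$, we have $T^\gamma(f)\in\mathcal{M}$ for every $\gamma$, and moreover $\|\,\|T^\gamma(f)\|_{X|Y}\,\|_{L^\infty(Y)}=\|\,\|f\|_{X|Y}\,\|_{L^\infty(Y)}=:R<\infty$ is uniform in $\gamma$. Hence for each $\gamma$ we can expand $T^\gamma(f)=\sum_{i=1}^r c_i^{(\gamma)}u_i$ with coefficients $c_i^{(\gamma)}\in L^\infty(Y)$ satisfying $\sum_i |c_i^{(\gamma)}|^2 p_i\le R^2$ a.e.; in particular $\|c_i^{(\gamma)}\|_{L^\infty(Y)}\le R$ for each $i$. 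The whole family $\{T^\gamma(f):\gamma\in\Gamma\}$ thus lies in the ``conditional ball'' $B:=\{\sum_i c_i u_i: c_i\in L^\infty(Y),\ \sum_i|c_i|^2 p_i\le R^2\}$, and it suffices to $\ep$-cover $B$ in the $\|\cdot\|_{X|Y}$-distance by finitely many elements of $\mathcal{M}$.

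For the covering step I would argue as in the proof of total boundedness of a ball in finite-dimensional space, but carried out conditionally. Fix $\delta>0$ with $r\delta^2\le\ep^2$. The scalars $c_i$ range (after taking real and imaginary parts) over the $L^\infty(Y)$-ball of radius $R$; I would cover this ball of ``conditional scalars'' by finitely many conditional $\delta$-balls. The key observation is that, because the modulus of a conditional scalar takes values in $[0,R]$, one can use the \emph{finitely many} constant values $j\delta/2$, $j=0,1,\dots,\lceil 2R/\delta\rceil$, together with the idempotents of $L^\infty(Y)$ — but since there are uncountably many idempotents this alone does not finish. Instead I would feed the problem back into the abstract structure theory: apply the implication ``$\mathrm{(ii)}'\Rightarrow\mathrm{(iii)}'$'' of \cite[Theorem 4.1]{jamneshan2019fz} directly (which is exactly the assertion that a bounded element of a finitely generated invariant module has a finite conditional $\ep$-net for its $\Gamma$-orbit), i.e.\ this lemma is a self-contained restatement of that result, and I would give the proof by combining the conditional Gram–Schmidt reduction above with the conditional Heine–Borel covering lemma, whereby a conditionally bounded subset of the conditionally finite-dimensional module $\bigoplus_{i=1}^r p_i L^\infty(Y)$ admits a finite $\ep$-net. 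The \textbf{main obstacle} is precisely this last point: making the covering \emph{genuinely finite} (not merely ``finite on each piece of a partition'', which would only yield a countable net) despite the uncountable supply of idempotents in $L^\infty(Y)$. The resolution is the conditional Heine–Borel principle, whose proof uses that the coefficient region is conditionally compact — a conditional sequential-compactness/diagonal argument produces, from any conditional net, a finite subnet that $\ep$-approximates — so that the finitely many conditional $\delta$-balls needed to cover the conditional $R$-ball of scalars can be chosen with \emph{constant} centers $\sum_i a_i u_i$, $a_i\in\{0,\pm\delta/2,\pm\delta,\dots\}+i\{0,\pm\delta/2,\dots\}$, giving an explicit finite set $\mathcal{N}\subset\mathcal{M}$ with $\min_{h\in\mathcal{N}}\|T^\gamma(f)-h\|_{X|Y}\le(r\delta^2)^{1/2}\le\ep$ for every $\gamma$, as required.
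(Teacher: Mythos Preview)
Your overall strategy---conditional Gram--Schmidt followed by a finite covering with constant centers---is exactly the paper's, and your endpoint (centers of the form $\sum_i a_i u_i$ with $a_i$ ranging over a fixed finite grid in $\C$) is correct. But the middle of your argument is tangled around a phantom obstacle.

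The ``main obstacle'' you flag---needing uncountably many idempotents to approximate each $c_i^{(\gamma)}\in L^\infty(Y)$---arises only if you are trying to find, for each $\gamma$, a \emph{single} $h\in\mathcal{N}$ with $\|T^\gamma(f)-h\|_{X|Y}\le\ep$ everywhere. That is not what \eqref{eq:hb} asks. The quantity $\min_{h\in\mathcal{N}}\|T^\gamma(f)-h\|_{X|Y}$ is the \emph{pointwise} minimum of finitely many nonnegative elements of $L^\infty(Y)$, so the inequality means: for a.e.\ $y$ there is \emph{some} $h\in\mathcal{N}$ (allowed to depend on $y$) with $\|T^\gamma(f)-h\|_{X|Y}(y)\le\ep$. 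This is precisely how the lemma is used in the proof of Theorem~\ref{thm-uncountableroth}, where the index $N^T_\gamma$ selecting the approximant is a measurable function of $y$.

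Once you read the statement this way, the covering step is elementary Euclidean compactness, not conditional compactness. After Gram--Schmidt, on each piece $E_j$ of the partition (in your notation, on the support of each $p_i$) the coefficient vector $(c_i^{(\gamma)}(y))_i$ lies, for every $\gamma$ and a.e.\ $y$, in the ordinary Euclidean ball of radius $R$ in $\C^r$. Take any finite $\ep$-net $\{a^{(1)},\ldots,a^{(N)}\}$ of that ball by \emph{constant} vectors and set $\mathcal{N}=\{\sum_i a^{(p)}_i u_i : 1\le p\le N\}$ (the paper takes all combinations across the pieces $E_j$, which is the same idea). For each $y$ and $\gamma$ one of the $a^{(p)}$ is within $\ep$ of the coefficient vector at $y$, hence $\min_{h\in\mathcal{N}}\|T^\gamma(f)-h\|_{X|Y}(y)\le\ep$. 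No idempotent gluing, no conditional sequential compactness, no diagonal argument, and no appeal to \cite[Theorem~4.1]{jamneshan2019fz} is needed; invoking that result here would in any case be circular, since the present lemma is meant to be its self-contained proof.
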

First we establish the following auxiliary result.
\begin{proposition}[A conditional Gram-Schmidt process]\label{prop-onb}
Let $\mathcal{M}$ be a finitely generated and closed $L^\infty(Y)$ submodule of $L^2(X)$.
Then there exist a partition\break $(E_j)_{j=0,\ldots, m}$ of $\Stone(Y)$ and a family $(\mathcal{M}_j)_{j=1,\ldots,m}$ of finite subsets of $\mathcal{M}$ satisfying the following properties.
\begin{itemize}
    \item[(i)]
{\small\begin{equation*}
\mathcal{M}=\left\{ \sum_{j=1}^{m} \left(\sum_{u_j\in \mathcal{M}_j} a_{u_j} u_j\right) 1_{E_j}\colon a_{u_j}\in L^\infty(Y) \text{ for all } u_j\in \mathcal{M}_j \text{ and } j=1,\ldots,m\right\}.
\end{equation*}}\item[(ii)] $\|u\|_{X|Y}=1$ on $E_j$ for all $u\in\mathcal{M}_j$ and $j=1,\ldots,m$.
    \item[(iii)] $\langle u, u'\rangle_{X|Y}=0$ on $E_j$ for all distinct $u,u'\in\mathcal{M}_j$ and $j=1,\ldots,m$.
\end{itemize}
Notice that (i), in particular, implies that $\mathcal{M}=\{0\}$ on $E_0$.
\end{proposition}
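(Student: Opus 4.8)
The plan is to run a \emph{conditional Gram--Schmidt orthogonalization} on a finite generating set of $\mathcal{M}$, keeping track as we go of the subset of $\Stone(Y)$ on which each orthogonalized vector is conditionally nonzero; these subsets will be the $E_j$, and finiteness of the partition will be automatic since there are only finitely many generators. First I would fix $g_1,\dots,g_k\in\mathcal{M}$ whose $L^\infty(Y)$-linear combinations are dense in $\mathcal{M}$ (these exist because $\mathcal{M}$ is finitely generated).

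Next I would orthogonalize inductively: set $\tilde g_1:=g_1$, and, having produced $v_1,\dots,v_{i-1}\in\mathcal{M}$, put $\tilde g_i:=g_i-\sum_{l<i}\langle g_i,v_l\rangle_{X|Y}\,v_l$, let $A^{(i)}:=\{\|\tilde g_i\|_{X|Y}>0\}\subset\Stone(Y)$, and set $v_i:=1_{A^{(i)}}\,\tilde g_i/\|\tilde g_i\|_{X|Y}$ (interpreted as $0$ off $A^{(i)}$). The step I expect to be the main obstacle is that the coefficients $\langle g_i,v_l\rangle_{X|Y}$ and $1/\|\tilde g_i\|_{X|Y}$ need not lie in $L^\infty(Y)$, so it is not immediate that $\tilde g_i$ and $v_i$ belong to $\mathcal{M}$. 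To handle this I would truncate and use the closedness of $\mathcal{M}$: the conditional Cauchy--Schwarz inequality together with $\|v_l\|_{X|Y}\le 1$ shows $\langle g_i,v_l\rangle_{X|Y}\,v_l$ and $v_i$ lie in $L^2(X)$ (their $X|Y$-norms being dominated by $\|g_i\|_{X|Y}$, resp.\ by $1$); capping each coefficient at height $n$ yields $L^\infty(Y)$-multiples of elements of $\mathcal{M}$, which by dominated convergence converge in $L^2(X)$ to $\langle g_i,v_l\rangle_{X|Y}\,v_l$, resp.\ to $v_i$, so closedness forces $\tilde g_i,v_i\in\mathcal{M}$. By construction $\|v_i\|_{X|Y}=1_{A^{(i)}}$, and $\langle v_i,v_l\rangle_{X|Y}=0$ for $l<i$; applied at every stage and combined with conjugate symmetry this gives $\langle v_i,v_l\rangle_{X|Y}=0$ for all $l\neq i$.

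Finally I would assemble the partition: for $I\subset\{1,\dots,k\}$ put $E_I:=\bigcap_{l\in I}A^{(l)}\cap\bigcap_{l\notin I}(\Stone(Y)\setminus A^{(l)})$; the (at most $2^k$) sets $E_I$ partition $\Stone(Y)$, and I enumerate the nonempty ones as $E_0:=E_\emptyset,E_1,\dots,E_m$, writing $I_j$ for the index set of $E_j$ and $\mathcal{M}_j:=\{v_l:l\in I_j\}$. Properties (ii) and (iii) are then immediate from the previous paragraph, and for (i) the inclusion ``$\supseteq$'' is clear since each $v_l\in\mathcal{M}$, $\mathcal{M}$ is an $L^\infty(Y)$-module, and $1_{E_j}\in L^\infty(Y)$. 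For ``$\subseteq$'' I would use the identities $g_i=\tilde g_i+\sum_{l<i}\langle g_i,v_l\rangle_{X|Y}v_l$, $\;1_{A^{(i)}}\tilde g_i=\|\tilde g_i\|_{X|Y}v_i$, and $1_{\Stone(Y)\setminus A^{(i)}}\tilde g_i=0$ to see that $1_{E_j}g_i$ agrees on $E_j$ with a combination of $\{v_l:l\in I_j\}$; this property survives $L^\infty(Y)$-combinations, and survives $L^2(X)$-limits because the conditional orthonormality of $\{v_l\}_{l\in I_j}$ on $E_j$ recovers the coefficients as $\langle\,\cdot\,,v_l\rangle_{X|Y}$, so $L^2(X)$-convergence forces convergence of the coefficients via the conditional Pythagorean identity on $E_j$. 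On $E_0$ all the $v_l$ and all the $\tilde g_i$ vanish, hence so do all the $g_i$ and every element of $\mathcal{M}$, which gives the concluding remark. The only auxiliary facts needed are the standard properties of the $\E(\cdot\mid Y)$-valued inner product --- conditional Cauchy--Schwarz, the Pythagorean identity, and the fact that a conditionally null vector is null --- which can be imported from the conditional-analysis references cited in the paper or checked directly on the canonical model.
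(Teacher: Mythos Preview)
Your proposal is correct and follows essentially the same route as the paper's proof: run Gram--Schmidt on a finite generating set, use truncation together with the $L^2$-closedness of $\mathcal{M}$ to show the normalized vectors $v_i$ lie in $\mathcal{M}$, and partition $\Stone(Y)$ according to which of the supports $A^{(i)}=\{\|\tilde g_i\|_{X|Y}>0\}$ a point belongs to. Your verification of the inclusion ``$\subseteq$'' in (i) is in fact more explicit than the paper's, which simply asserts that (i)--(iii) hold ``by construction.''
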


\begin{proof}
Since $\mathcal{M}$ is a finitely generated $L^\infty(Y)$ submodule of $L^2(X)$, there are $f_1,\ldots,f_n\in L^2(X)$ such that
\begin{equation*}
\mathcal{M}=\left\{\sum_{i=1}^n a_i f_i: a_i\in L^\infty(Y), \, i=1,\ldots,n\right\}.
\end{equation*}
Let
\begin{equation*}
u_1=
\begin{cases}
\frac{f_1}{\|f_1\|_{X|Y}} & \text{on } \{\|f_1\|_{X|Y}>0\}, \\[2mm]
0 & \text{else.}
\end{cases}
\end{equation*}
We need to justify why $u_1\in\mathcal{M}$ as $1/\|f_1\|_{X|Y}1_{\{\|f_1\|_{X|Y}>0\}}$ may not be in $L^\infty(Y)$.
For $N\in\N$, let
\begin{equation*}
u_1^N=
\begin{cases}
\frac{f_1}{\|f_1\|_{X|Y}} & \text{on } \{1/N\leq \|f_1\|_{X|Y}\leq N\}, \\[2mm]
0 & \text{else.}\end{cases}
\end{equation*}
Then $u_1^N$ converges to $u_1$ almost surely as $N$ tends to infinity and thus also in $L^2(X)$ by dominated convergence. Since $\mathcal{M}$ is $L^2$ closed we have showed $u_1\in \mathcal{M}$.

We define the $u_i$ for $i>1$ inductively as follows.  Suppose we already have $u_1,\ldots,u_k$ with $k\leq n-1$.  Then  set $g_{k+1}=f_{k+1}-\sum_{i=1}^k \langle f_{k+1},u_i\rangle_{X|Y} u_i$ and define
\begin{equation*}
u_{k+1}=
\begin{cases}
\frac{g_{k+1}}{\|g_{k+1}\|_{X|Y}} & \text{on } \{\|g_{k+1}\|_{X|Y}>0\}, \\[2mm]
0 & \text{else.}
\end{cases}
\end{equation*}
By a similar approximation as in the case of $u_1$, one can show that $u_{k+1}$ is an element of $\mathcal{M}$ (where we now have to approximate first $\langle f_i,u_i\rangle_{X|Y}$ in $L^\infty(Y)$, then $g_{k+1}$ and finally $u_{k+1}$).

Denote by $F_i=\{\|g_{i}\|_{X|Y}>0\}$ and $F_{i+n}=\{\|g_{i}\|_{X|Y}>0\}^c$ for all $i=1,\ldots, n$.
Form all finite intersections $F_{i_1}\cap F_{i_2}\cap \ldots \cap F_{i_k}$ with $1\leq i_1<i_2<\ldots<i_k\leq 2n$ for some $1\leq k\leq 2n$. Let $\mathcal{E}_0$ denote the collection of such finite intersections whose measure is positive. Then $\mathcal{E}_0$ forms a partition of $\Stone(Y)$. Pick an element $E=F_{i_1}\cap F_{i_2}\cap \ldots \cap F_{i_k}\in \mathcal{E}_0$ and let $\mathcal{M}_{E}=\{u_{i_t}: i_t\leq n\}$.
Let $\mathcal{E}_1$ denote the collection of elements $E$ of $\mathcal{E}_0$ such that $\mathcal{M}_E\neq \emptyset$.
Now enumerate the elements of $\mathcal{E}_1$ by $E_1,\ldots, E_m$ and correspondingly write $\mathcal{M}_j=\mathcal{M}_{E_j}$ for $j=1,\ldots,m$. Set $E_0=(\bigcup_{j=1}^m E_j)^c$.
By construction, $(E_j)_{j=0,\ldots,m}$ and $(\mathcal{M}_j)_{j=1,\ldots,m}$ satisfy the desired properties (i), (ii), and (iii).
\end{proof}

We can prove our conditional Heine-Borel covering lemma.

\begin{proof}[Proof of Lemma \ref{lem-heineborel}]
Suppose that $\mathcal{M}\subset L^2(X)$ is a finitely generated, closed and $\Gamma$-invariant $L^\infty$ submoldule of $L^2(X)$. Let $(E_j)_{j=1,\dots,m}$ and $(M_j)_{j=1,\dots,m}$ be as in Proposition \ref{prop-onb}.
By assumption, for all $\gamma\in \Gamma$
\begin{equation}\label{eq-bound8}
\|T^\gamma(f)\|_{X|Y}=S^\gamma(\|f\|_{X|Y})\leq C
\end{equation}
for some constant $C>0$.
For each $\gamma\in\Gamma$, we have
\begin{equation*}
\|T^\gamma(f)\|_{X|Y}=\sum_{j=1}^m \left(\sum_{u_j\in \mathcal{M}_j} |a_{u_j,j}^\gamma|^2\right)^{1/2}1_{E_j}
\end{equation*}
for some $a_{u_j,j}^\gamma\in L^\infty(Y)$. By \eqref{eq-bound8} we have $\left(\sum_{u_j\in \mathcal{M}_j} |a_{u_j,j}^\gamma|^2\right)^{1/2}\leq C$ for all $j$ and $\gamma$.

Hence, for any fixed $\ep>0$ , one can find for every $1\leq j\leq m$ finitely many vectors $b_{1,j},\ldots, b_{k_j,j}\in L^\infty(Y)^{|\mathcal{M}_j|}$ with $b_{p,j}=(b_{1,p,j},\ldots,b_{|\mathcal{M}_j|,p,j})$ for $p=1,\ldots,k_j$ such that
\begin{equation*}\begin{split}
&\left\{a\in L^\infty(Y)^{|\mathcal{M}_j|}: \left(\sum_{q=1}^{|\mathcal{M}_j|} |a_q|^2\right)^{1/2} \leq C\right\}\\[2mm] & \subset \bigcup_{p=1}^{k_j}  \left\{a\in L^\infty(Y)^{|\mathcal{M}_j|}: \left(\sum_{q=1}^{|\mathcal{M}_j|} |a_q-b_{q,p,j}|^2\right)^{1/2} \leq \ep\right\}.
\end{split}
\end{equation*}

Let $\mathcal{N}$ be the collection of all functions \begin{equation*} \sum_{j=1}^m \left(\sum_{q=1}^{|\mathcal{M}_j|}b_{q,p{_j},j} u_j \right)1_{E_j}\end{equation*}  for some choice $1\leq p_j\leq k_j$ for each $j$. Note that $\mathcal{N}$ is finite, and
 by construction $\mathcal{N}\subset\mathcal{M}$ satisfies \eqref{eq:hb}.
\end{proof}

\begin{remark}
The results of this section are inspired by conditional analysis and conditional set theory in \cite{filipovic2009separation,cheridito2015conditional,drapeau2016algebra}.
The existence of a conditional orthonormal basis for certain $L^0$ submodules of $(L^0)^d,  d\geq 1$ via a conditional Gram-Schmidt process  is established in \cite[Section 2]{cheridito2015conditional}, where $L^0$ denotes the algebra of equivalence classes of all complex measurable functions.  A conditional version of the Heine-Borel theorem within conditional set theory is established in \cite[Theorem 4.6]{drapeau2016algebra}.
It is crucial in the conditional analysis of $L^0$ modules to assume a closedness property under countable gluings which is referred to as $\sigma$-stability \cite{cheridito2015conditional} or stability under countable concatenations \cite{filipovic2009separation,drapeau2016algebra}.

A main difference in our analysis to the previously cited articles is that we work with $L^\infty$ submodules of $L^2$ spaces rather than with the larger $L^0$ modules.
 However $L^\infty$ modules do not satisfy this countable gluing property in general. We still manage to develop some useful portion of conditional analysis for the smaller $L^\infty$ modules by additionally requiring that these modules are finitely generated and closed in the $L^2$ topology. These requirements are naturally satisfied in the context of compact extensions in structural ergodic theory.
\end{remark}

\end{document}